\newtheorem{theorem}{Theorem}
\newtheorem{defi}[theorem]{Definition}
\newtheorem{lemma}[theorem]{Lemma}
\newtheorem{coro}[theorem]{Corollary}
\newtheorem{proposition}[theorem]{Proposition}
\newtheorem{remark}[theorem]{Remark}
\numberwithin{equation}{section}
\begin{document}
	
	\title[$RLL$-REALIZATION OF TWO-PARAMETER QUANTUM
	AFFINE ALGEBRA IN TYPE $C_n^{(1)}$]{$RLL$-REALIZATION OF TWO-PARAMETER QUANTUM AFFINE ALGEBRA IN TYPE $C_n^{(1)}$}
	
	\author[X. Zhong]{Xin Zhong}
	\address{School of Mathematical Sciences, MOE Key Laboratory of Mathematics and Engineering Applications \& Shanghai Key Laboratory of PMMP, East China Normal University, Shanghai 200241, China}
		\email{52215500002@stu.ecnu.edu.cn}
		
	\author[N. Hu]{Naihong Hu}
	\address{School of Mathematical Sciences, MOE Key Laboratory of Mathematics and Engineering Applications \& Shanghai Key Laboratory of PMMP, East China Normal University, Shanghai 200241, China}
	\email{nhhu@math.ecnu.edu.cn}
	
\author[N. Jing]{Naihuan Jing$^*$}
\address{Department of Mathematics, North Carolina State University, Raleigh, NC 27695, USA}
\email{jing@ncsu.edu}
%\thanks{$*$ Corresponding author}
	
	\thanks{$^*$Corresponding author.
		The paper is supported by the NNSFC (Grants No.
		12171155, 12171303) and in part by the Science and Technology Commission of Shanghai Municipality (No. 22DZ2229014).}
	\begin{abstract}
		We establish an explicit correspondence  between the Drinfeld current algebra presentation for
		the two-parameter quantum affine algebra $U_{r, s}(\mathrm{C}_n^{(1)})$ and the $R$-matrix realization \'a la Faddeev, Reshetikhin and Takhtajan.
	\end{abstract}
	
	\keywords{$R$-matrix; Drinfeld realization; $RLL$-formulation; quantum affine algebra; Gauss decomposition}
	\subjclass[2010]{Primary 17B37, 16T25, 81R50; Secondary 17B67}
	\maketitle
	
	%\tableofcontents
	
	\section{Introduction}
The quantum Yang-Baxter equation (YBE) arose as a common symmetry in various physical models in quantum field theory and statistical physics \cite{B, Y} and provides
the common governing equation for quantum groups. The quantum affine algebras $U_q(\widehat{\mathfrak{g}})$ are the quantum enveloping algebra of
the affine Kac-Moody algebras $U(\widehat{\mathfrak g})$ introduced by Drinfeld and Jimbo using Chevalley generators and Serre relations. There are two other
presentations of $U_q(\widehat{\mathfrak{g}})$: the Faddeev-Reshetikhin-Takhtajan formulism or the $R$-matrix realization given by
Reshetikhin and Semenov-Tian-Shansky \cite{RS}
using the spectra-parameter-dependent Yang-Baxter equation in the framework of the quantum inverse scattering method \cite{FRT, TF}. %3, 4}.
The third realization is Drinfeld's new realization, a $q$-analogue of the loop realization of the classical affine Lie algebra \cite{D}.

The three realizations are usually believed to be equivalent for quantum affine algebras and Yangians as the first such isomorphism was announced
by Drinfeld \cite{D}. Ding and Frenkel showed the $R$-matrix realization is isomorphic to the Drinfeld realization for the quantum affine algebra in type $A$ by using the Gauss decomposition, which also provided a natural way to derive the Drinfeld realization from the $R$-matrix
realization of the quantum affine algebra \cite{DF}. Similarly Brundan and Kleshchev have proven the isomorphism for the Yangian algebra
 in type $A$ \cite{BK}.
 Jing, Liu, and Molev extended this result to types $B_n^{(1)}$, $C_n^{(1)}$, $D_n^{(1)}$ both for the Yangians and quantum affine algebras
 %and twisted type $A_{2 n-1}^{(2)}$
 \cite{JLM1, JLM2, JLM3}. We remark that a correspondence between the $R$-matrix formulism and Drinfeld realization was given for the general type one quantum affine algebras in \cite{HM}, and recently
 the isomorphism was also established for the twisted quantum affine algebra in type $A_{2n-1}^{(2)}$ \cite{JZL}.

Meanwhile, two-parameter quantum groups have been revitalized by Benkart-Witherspoon \cite{BW2}, Bergeron-Gao-Hu \cite{BGH1}, and many others, etc. In 2004, Hu, Rosso, and Zhang investigated the two-parameter quantum affine algebra $U_{r, s}(\widehat{\mathfrak{sl}}_n)$ and obtained the
two-parameter version of the celebrated Drinfeld realization, and proposed for the first time the quantum affine Lyndon basis as a monomial basis in the quantum affine case \cite{HRZ}. The PBW basis theorem was proved in \cite{Ts} for loop algebras using shuffle algebras. 
The significance of the second parameter in $U_{r, s}(\widehat{\mathfrak{g}})$ was recognized \cite{FL} as related to the Tate twist in geometric realization of quantum groups. Furthermore, two-parameter quantum affine algebras in types $B_n^{(1)}$, $C_n^{(1)}$, $D_n^{(1)}$ have been investigated by Hu-Zhang etc, including the Drinfeld realizations and their vertex representations \cite{HZ}.
Bergeron-Gao-Hu have studied finite-dimensional representations of two-parameter quantum orthogonal and symplectic groups and constructed corresponding quasi-$R$-matrices and the quantum Casimir operators \cite{BGH2}. In \cite{JL2} the correspondence between
the $R$-matrix formulism and the Drinfeld realization of the quantum affine algebra
$U_{r, s}(\widehat{\mathfrak{gl}}_n)$ was established using Gauss decomposition.

In view of all these developments, a natural question is how to generalize the $RLL$-realizations of $U_q(\hat{\mathfrak g})$ to the two-parameter quantum affine algebras $U_{r,s}(\hat{\mathfrak{g}})$ for other classical types?

The first goal of this paper is to set up the correspondence between the Faddeev-Reshetikhin-Takhtajan formalism and the Drinfeld-Jimbo presentation for the two-parameter quantum algebra $U_{r, s}(\mathrm{C}_{\mathrm{n}})$ in finite type $C$. The second goal is to give a $RLL$-presentation for the quantum affine algebra $U_{r, s}(\mathrm{C}_n^{(1)})$ and provide its Drinfeld realization
using the Gauss decomposition. Compared with standard quantum affine algebra $U_{q}(\mathrm{C}_n^{(1)})$, the two parameter quantum affine algebras $U_{r, s}(\mathrm{C}_n^{(1)})$ have twice as many group-like elements, it requires more effort to
control additional terms in the quantum $R$ matrix. As a result there are more relations for the $RLL$-formulism.

The paper is organized as follows. In section 2, we start with the vector representation of type $1$ for $U_{r, s}\left(\mathrm{C}_{\mathrm{n}}\right)$, and we compute the corresponding basic quantum $R$-matrix using the Hopf algebra structure. In section 3, we establish the isomorphism between Faddeev-Reshetikhin-Takhtajan's and Drinfeld-Jimbo's definitions of $U_{r,s}\left(\mathrm{C}_n\right)$ and the Gauss decomposion in Remark 9. In section 4, we use Jimbo's method or the so-called Yang Baxterization \cite{GWX} to get the quantum affine $R$-matrix, and both arrive at the same result. To work with the quantum affine algebra in type $C^{(1)}_n$, we apply the Gauss decomposition of the generator matrices in the $R$-matrix presentation, we figure out the contribution of those extra terms in the $R$ matrix and find more relations which are different from one parameter case. After some complicated calculations together with $R$-matrix of type $A_N^{(1)}$, we find that the Drinfeld generators satisfy the required relations of the Drinfeld presentation.

	\section{PRELIMINARIES}
	\subsection{Notation and Definition}
	Let $\mathbb{K}=\mathbb{Q}\left(r, s\right)$ denote the field of rational functions with two-parameters $r, s \, (r \neq \pm s)$. Let $\mathfrak{g}$ be the finite-dimensional complex simple Lie algebra of type $C_n$ with Cartan subalgebra $\mathfrak{h}$ and Cartan matrix $A=\left(a_{i j}\right)_{i, j \in I}(I=\{1,2, \cdots, n\})$. Fix coprime
	integers $\left(d_i\right)_{i \in I}$ such that $\left(d_i a_{i j}\right)$  is symmetric. Assume $\Phi$ is a finite root system of type $C_n$ with $\Pi$ a base of simple roots. Regard $\Phi$ as a subset of a Euclidean space $\mathrm{E}=$$\mathbb{R}^n$ with an inner product $\langle\   , \ \rangle$. Let $\epsilon_1, \epsilon_2, \cdots, \epsilon_n$ denote an orthonormal basis of E. Let $\alpha_i=\epsilon_i-\epsilon_{i+1}, \alpha_n=2 \epsilon_n$ be the simple roots of the simple Lie algebra $\mathfrak{s p}_{2 n}$, $\left\{\alpha_i^{\vee}\right\}$ and $\left\{\lambda_i\right\}$, the sets of simple coroots and fundamental weights respectively. Let $Q=\bigoplus_{i=1}^n \mathbb{Z} \alpha_i$ be the root lattice, $\theta$ the highest root, and $\delta$ the primitive imaginary root. Take $\alpha_0=\delta-\theta$, then $\Pi^{\prime}=\left\{\alpha_i \mid i \in I_0=\{0,1, \cdots, n\}\right\}$ is a base of simple roots of the affine Lie algebra $C_n^{(1)}$. Let $\widehat{Q}=\bigoplus_{i=0}^n \mathbb{Z} \alpha_i$ denote the root lattice of $C_n^{(1)}$.
	
	Let $c$ be the canonical central element of the affine Lie algebra of type $C_n^{(1)}$. Define a nondegenerate symmetric bilinear form $(\ \mid \ )$ on $\mathfrak{h}^*$ satisfying
	$$
	\left(\alpha_i \mid \alpha_j\right)=d_i a_{i j}, \quad\left(\delta \mid \alpha_i\right)=(\delta \mid \delta)=0, \quad \text { for all } i, j \in I_0,
	$$
	where $\left(d_0, d_1, \cdots, d_n\right)=\left(2,1 , \cdots,1 , 2\right)$. Denote $r_i=r^{d_i}, s_i=s^{d_i}$ for $i=0,1$,$\cdots, n$.
	
	Given two sets of symbols $W=\left\{\omega_0, \omega_1, \cdots, \omega_n\right\}, W^{\prime}=\left\{\omega_0^{\prime}, \omega_1^{\prime}, \cdots, \omega_n^{\prime}\right\}$, define the structural constants matrix $\left(\left\langle\omega_i^{\prime}, \omega_j\right\rangle\right)_{(n+1) \times (n+1)}$ of type $C_n^{(1)}$ by
	$$
	\left(\begin{array}{cccccc}
		r^{2} s^{-2} & r^{-2} & 1 & \cdots & 1 & (rs)^{2} \\
		s^{2} & r s^{-1} & r^{-1} & \cdots & 1 & 1 \\
		\cdots & \cdots & \cdots & \cdots & \cdots & \cdots \\
		1 & 1 & 1 & \cdots & r s^{-1} & r^{-2} \\
		(r s)^{-2} & 1 & 1 & \cdots & s^{2} & r^{2} s^{-2}
	\end{array}\right).
	$$
	\begin{defi} The two-parameter quantum affine algebra $U_{r, s}\left(\mathrm{C}_n^{(1)}\right)$ is the unital associative algebra over $\mathbb{K}$ generated by the elements $e_j, f_j, \omega_j^{ \pm 1}, \,\omega_j^{\prime \pm 1}\left(j \in I_0\right)$, $\gamma^{ \pm \frac{1}{2}}$, $ \gamma^{\prime \pm \frac{1}{2}}$, $D^{ \pm 1}, D^{\prime \pm 1}$ subject to
	the following relations:
		
		$(\hat{C} 1)$ $\gamma^{ \pm \frac{1}{2}}, \gamma^{\prime \pm \frac{1}{2}}$ are central with $\gamma=\omega_\delta, \gamma^{\prime}=\omega_\delta^{\prime}$, such that $\gamma \gamma^{\prime}=(r s)^c$. The generators $\omega_i^{ \pm 1}, \omega_j^{\prime \pm 1}$ all commute with each other
		and
		$\omega_i \omega_i^{-1}=\omega_i^{\prime} \omega_i^{\prime-1}=1,\left[\omega_i^{ \pm 1}, D^{ \pm 1}\right]=\left[\omega_j^{\prime \pm 1}, D^{ \pm 1}\right]=$
		$\left[\omega_i^{ \pm 1}, D^{\prime \pm 1}\right]=\left[\omega_j^{\prime \pm 1}, D^{\prime \pm 1}\right]=\left[D^{\prime \pm 1}, D^{ \pm 1}\right]=0$.
		
		$(\hat{C} 2)$ For $0 \leqslant i \leqslant n$ and $1 \leqslant j<n$,
		$$
		D e_i D^{-1}=r_i^{\delta_{0 i}} e_i, \quad D f_i D^{-1}=r_i^{-\delta_{0 i}} f_i \text {, }
		$$
		$$
		\omega_j e_i \omega_j^{-1}=r_j^{\left(\epsilon_j, \alpha_i\right)} s_j^{\left(\epsilon_{j+1}, \alpha_i\right)} e_i, \quad \omega_j f_i \omega_j^{-1}=r_j^{-\left(\epsilon_j, \alpha_i\right)} s_j^{-\left(\epsilon_{j+1}, \alpha_i\right)} f_i,
		$$
	$$
	\omega_n e_j \omega_n^{-1}=r^{2\left(\epsilon_n, \alpha_j\right)} e_j, \quad \omega_n f_j \omega_n^{-1}=r^{-2\left(\epsilon_n, \alpha_j\right)} f_j,
	$$
	$$
	\omega_n e_n \omega_n^{-1}=r^{\left(\epsilon_n, \alpha_n\right)} s^{-\left(\epsilon_n, \alpha_n\right)} e_n, \quad \omega_n f_n \omega_n^{-1}=r^{-\left(\epsilon_n, \alpha_n\right)} s^{\left(\epsilon_n, \alpha_n\right)} f_n,
	$$
	$$
	\omega_0 e_j \omega_0^{-1}=r^{-2\left(\epsilon_{j+1}, \alpha_0\right)} s^{2\left(\epsilon_1, \alpha_j\right)} e_j, \quad \omega_0 f_j \omega_0^{-1}=r^{2\left(\epsilon_{j+1}, \alpha_0\right)} s^{-2\left(\epsilon_1, \alpha_j\right)} f_j,
	$$
	$$
	\omega_n e_0 \omega_n^{-1}=(r s)^2 e_0, \quad \omega_n f_0 \omega_n^{-1}=(r s)^{-2} f_0,
	$$
	$$
	\omega_0 e_n \omega_0^{-1}=(r s)^{-2} e_n, \quad \omega_0 f_n \omega_0^{-1}=(r s)^2 f_0,
	$$
	$$
	\omega_0 e_0 \omega_0^{-1}=r^{-\left(\epsilon_1, \alpha_0\right)} s^{\left(\epsilon_1, \alpha_0\right)} e_0, \quad \omega_0 f_0 \omega_0^{-1}=r^{\left(\epsilon_1, \alpha_0\right)} s^{-\left(\epsilon_1, \alpha_0\right)} f_0.
	$$	

		$(\hat{C} 3)$ For $0 \leqslant i \leqslant n$ and $1 \leqslant j<n$,
			$$
		D^{\prime} e_i D^{\prime-1}=s_i^{\delta_{0 i}} e_i, \quad D^{\prime} f_i D^{\prime-1}=s_i^{-\delta_{0 i}} f_i,
		$$
		$$
		\omega_j^{\prime} e_i \omega_j^{\prime-1}=s_j^{\left(\epsilon_j, \alpha_i\right)} r_j^{\left(\epsilon_{j+1}, \alpha_i\right)} e_i, \quad \omega_j^{\prime} f_i^{\prime} \omega_j^{\prime-1}=s_j^{-\left(\epsilon_j, \alpha_i\right)} r_j^{-\left(\epsilon_{j+1}, \alpha_i\right)} f_i,
		$$
		$$
		\omega_n^{\prime} e_j \omega_n^{\prime-1}=s^{2\left(\epsilon_n, \alpha_j\right)} e_j, \quad \omega_n^{\prime} f_j \omega_n^{\prime-1}=s^{-2\left(\epsilon_n, \alpha_j\right)} f_j \text {, }
		$$
		$$
		\omega_n^{\prime} e_n \omega_n^{\prime-1}=s^{\left(\epsilon_n, \alpha_n\right)} r^{-\left(\epsilon_n, \alpha_n\right)} e_n, \quad \omega_n^{\prime} f_n \omega_n^{\prime-1}=s^{-\left(\epsilon_n, \alpha_n\right)} r^{\left(\epsilon_n, \alpha_n\right)} f_n,
		$$
		$$
		\omega_0^{\prime} e_j \omega_0^{\prime-1}=s^{-2\left(\epsilon_{j+1}, \alpha_0\right)} r^{2\left(\epsilon_1, \alpha_j\right)} e_j, \quad \omega_0^{\prime} f_j \omega_0^{\prime-1}=s^{2\left(\epsilon_{j+1}, \alpha_0\right)} r^{-2\left(\epsilon_1, \alpha_j\right)} f_j,
		$$
		$$
		\omega_n^{\prime} e_0 \omega_n^{\prime-1}=(r s)^2 e_0, \quad \omega_n^{\prime} f_0 \omega_n^{\prime-1}=(r s)^{-2} f_0,
		$$
		$$
		\omega_0^{\prime} e_n \omega_0^{\prime-1}=(r s)^{-2} e_n, \quad \omega_0^{\prime} f_n \omega_0^{\prime-1}=(r s)^2 f_0,
		$$
		$$
		\omega_0^{\prime} e_0 \omega_0^{\prime-1}=s^{-\left(\epsilon_1, \alpha_0\right)} r^{\left(\epsilon_1, \alpha_0\right)} e_0, \quad \omega_0^{\prime} f_0 \omega_0^{\prime-1}=s^{\left(\epsilon_1, \alpha_0\right)} r^{-\left(\epsilon_1, \alpha_0\right)} f_0.
		$$	
		
		$(\hat{C} 4)$ For $i, j \in I_0$, we have:
		$$\left[e_i, f_j\right]=\frac{\delta_{i j}}{r_i-s_i}\left(\omega_i-\omega_i^{\prime}\right)$$
		
		$(\hat{C} 5)$ For all $1 \leqslant i \neq j \leqslant n$ but $(i, j) \notin\{(0, n),(n, 0)\}$ such that $a_{i j}=0$, we have:
$$
\left[e_i, e_j\right]=\left[f_i, f_j\right]=0,
$$
$$
e_n e_0=(r s)^2 e_0 e_n, \quad f_0 f_n=(r s)^2 f_n f_0.
$$

		($\hat{C} 6)$ For $1 \leqslant i \leqslant n-2$, the following $(r, s)$-Serre relations hold for $e_i'$s:
	$$
	\begin{gathered}
		e_i^2 e_{i+1}-(r+s) e_i e_{i+1} e_i+(r s) e_{i+1} e_i^2=0, \\
		e_0^2 e_1-(r+s) e_0 e_1 e_0+r s e_1 e_0^2=0, \\
		e_{i+1}^2 e_i-\left(r_{i+1}^{-1}+s_{i+1}^{-1}\right) e_{i+1} e_i e_{i+1}+\left(r_{i+1}^{-1} s_{i+1}^{-1}\right) e_i e_{i+1}^2=0, \\
		e_n^2 e_{n-1}-\left(r^{-1}+s^{-1}\right) e_n e_{n-1} e_n+\left(r^{-1} s^{-1}\right) e_{n-1} e_n^2=0,
	\end{gathered}
	$$
	$$
	\begin{aligned}
		& e_{n-1}^3 e_n-\left(r+(r s)^{\frac{1}{2}}+s\right) e_{n-1}^2 e_n e_{n-1}+(r s)^{\frac{1}{2}}\left(r+(r s)^{\frac{1}{2}}+s\right) e_{n-1} e_n e_{n-1}^2-(r s)^{\frac{3}{2}} e_n e_{n-1}^3=0, \\
		& e_1^3 e_0-\left(r^{-1}+(r s)^{-\frac{1}{2}}+s^{-1}\right) e_1^2 e_0 e_1+(r s)^{-\frac{1}{2}}\left(r^{-1}+(r s)^{-\frac{1}{2}}+s^{-1}\right) e_1 e_0 e_1^2-(r s)^{-\frac{3}{2}} e_0 e_1^3=0.
	\end{aligned}
	$$
		$(\hat{C} 7)$ For $1 \leqslant i \leqslant n-2$, the following $(r, s)$-Serre relations hold for $f_i'$s::
	$$
	\begin{gathered}
		f_{i+1} f_i^2-(r+s) f_i f_{i+1} f_i+(r s) f_i^2 f_{i+1}=0, \\
		f_1 f_0^2-(r+s) f_0 f_1 f_0+r s f_0^2 f_1=0, \\
		f_i f_{i+1}^2-\left(r_{i+1}^{-1}+s_{i+1}^{-1}\right) f_{i+1} f_i f_{i+1}+\left(r_{i+1}^{-1} s_{i+1}^{-1}\right) f_{i+1}^2 f_i=0, \\
		f_{n-1} f_n^2-\left(r^{-1}+s^{-1}\right) f_n f_{n-1} f_n+\left(r^{-1} s^{-1}\right) f_n^2 f_{n-1}=0, \\
		f_n f_{n-1}^3-\left(r+(r s)^{\frac{1}{2}}+s\right) f_{n-1} f_n f_{n-1}^2+(r s)^{\frac{1}{2}}\left(r+(r s)^{\frac{1}{2}}+s\right) f_{n-1}^2 f_n f_{n-1}-(r s)^{\frac{3}{2}} f_{n-1}^3 f_n=0, \\
		f_0 f_1^3-\left(r^{-1}+(r s)^{-\frac{1}{2}}+s^{-1}\right) f_1 f_0 f_1^2+(r s)^{-\frac{1}{2}}\left(r^{-1}+(r s)^{-\frac{1}{2}}+s^{-1}\right) f_1^2 f_0 f_1-(r s)^{-\frac{3}{2}} f_1^3 f_0=0 .
	\end{gathered}
	$$
\end{defi}
The algebra $U_{r, s}\left(\mathrm{C}_n^{(1)}\right)$  is a Hopf algebra with the coproduct $\Delta$, the counit $\varepsilon$ and the antipode $S$, defined below: for $i \in I_0$,
$$
\begin{gathered}
	\Delta\left(\gamma^{ \pm \frac{1}{2}}\right)=\gamma^{ \pm \frac{1}{2}} \otimes \gamma^{ \pm \frac{1}{2}}, \quad \Delta\left(\gamma^{\prime \pm \frac{1}{2}}\right)=\gamma^{\prime \pm \frac{1}{2}} \otimes \gamma^{\prime \pm \frac{1}{2}}, \\
	\Delta\left(D^{ \pm 1}\right)=D^{ \pm 1} \otimes D^{ \pm 1}, \quad \Delta\left(D^{\prime \pm 1}\right)=D^{\prime \pm 1} \otimes D^{\prime \pm 1}, \\
	\Delta\left(\omega_i\right)=\omega_i \otimes \omega_i, \quad \Delta\left(\omega_i^{\prime}\right)=\omega_i^{\prime} \otimes \omega_i^{\prime}, \\
	\Delta\left(e_i\right)= 1\otimes e_i+ e_i\otimes \omega_i, \quad \Delta\left(f_i\right)= \omega_i^{\prime}\otimes f_i +f_i \otimes 1 , \\
	\varepsilon\left(e_i\right)=\varepsilon\left(f_i\right)=0, \quad \varepsilon\left(\gamma^{ \pm \frac{1}{2}}\right)=\varepsilon\left(\gamma^{\prime \pm \frac{1}{2}}\right)=\varepsilon\left(D^{ \pm 1}\right)=\varepsilon\left(D^{\prime \pm 1}\right)=\varepsilon\left(\omega_i\right)=\varepsilon\left(\omega_i^{\prime}\right)=1, \\
	S\left(\gamma^{ \pm \frac{1}{2}}\right)=\gamma^{\mp \frac{1}{2}}, \quad S\left(\gamma^{\prime \pm \frac{1}{2}}\right)=\gamma^{\prime \mp \frac{1}{2}}, \quad S\left(D^{ \pm 1}\right)=D^{\mp 1}, \quad S\left(D^{\prime \pm 1}\right)=D^{\prime \mp 1}, \\
	S\left(e_i\right)=-e_i \omega_i^{-1} , \quad S\left(f_i\right)=-\omega_i^{\prime-1} f_i , \quad S\left(\omega_i\right)=\omega_i^{-1}, \quad S\left(\omega_i^{\prime}\right)=\omega_i^{\prime-1} .
\end{gathered}
$$

It is clear that $\tau: e_i \to f_i , \omega_i\to \omega_i', r\to s, s\to r$ is an anti-automorphism of $U_{r, s}(\widehat{\mathfrak g})$ over $\mathbb C$.

	\subsection{Basic quantum $R$-matrix for $U_{r, s}\left(\mathrm{C_n}\right)$}
In this section, we consider the $R$-matrix $R_{1,1}:=R_{V, V}$ for the vector representation $T_1=T_V$ of the algebra $U_{r, s}\left(\mathrm{C}_n\right)$.
The $R$-matrices for quantum affine algebras were given by Jimbo in \cite{J}, which serve as the starting point in QISM.
We now consider the generalized $(r, s)$-dependent $R$-matrix for the fundamental representation of $U_{r, s}(\mathfrak{sp}(2n))$.
Consider the $2n$-dimensional vector space $V$ over $\mathbb{K}$ with basis $\left\{v_j \mid 1 \leq j \leq 2 n\right\}$. The following defines
the vector representation of $U_{r, s}\left(\mathrm{C}_n\right)$.
	\begin{lemma}
		Let $E_{k l}$ be the $2 n \times 2 n$ matrix with $1$ in the $(k, l)$ position and $0$
		elsewhere. The vector representation $T_1$ of $U_{r, s}\left(\mathrm{C}_n\right)$  is described by the following formulas
\begin{align*}
	T_1\left(e_i\right)&=r^{-\frac{1}{2}} s^{-\frac{1}{2}} E_{i+1, i}-E_{2 n-i+1,2 n-i}, \quad T_1\left(e_n\right)=r^{-\frac{1}{2}} s^{-\frac{1}{2}} E_{n+1, n}, \\
	T_1\left(f_i\right)&=r^{-\frac{1}{2}} s^{-\frac{1}{2}} E_{i, i+1}-E_{2 n-i, 2 n-i+1}, \quad T_1\left(f_n\right)=r^{-\frac{1}{2}} s^{-\frac{1}{2}} E_{n, n+1}, \\
	T_1\left(w_i\right)&=r^{-1} E_{i, i}+s^{-1} E_{i+1, i+1}+s E_{2 n-i, 2 n-i}+r E_{2 n-i+1,2 n-i+1}+\sum_{j \neq\{i, i+1,2 n-i, 2 n-i+1\}} E_{j, j}, \\
	T_1\left(w_i'\right)&=s^{-1} E_{i, i}+r^{-1} E_{i+1, i+1}+r E_{2 n-i, 2 n-i}+s E_{2 n-i+1,2 n-i+1}+\sum_{j \neq\{i, i+1,2 n-i, 2 n-i+1\}} E_{j, j},\\
T_1\left(w_n\right)&=r s \sum_{i=1}^{n-1} E_{i, i}+r^{-1} s E_{n, n}+r s^{-1} E_{n+1, n+1}+r^{-1} s^{-1} \sum_{i=n+2}^{2 n} E_{i, i}, \\
	T_1\left(w_n'\right)&=r s \sum_{i=1}^{n-1} E_{i, i}+s^{-1} r E_{n, n}+s r^{-1} E_{n+1, n+1}+r^{-1} s^{-1} \sum_{i=n+2}^{2 n} E_{i, i} .
\end{align*}
\end{lemma}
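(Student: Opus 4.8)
The plan is to verify directly that the assignments in the statement respect every defining relation of $U_{r,s}(\mathrm{C}_n)$ -- that is, the finite-type specializations of relations $(\hat{C}1)$--$(\hat{C}7)$ indexed only by $i,j\in I=\{1,\dots,n\}$ and not involving $D,D',\gamma,\gamma'$; since $U_{r,s}(\mathrm{C}_n)$ is presented by generators and relations, this suffices. Every check reduces to the matrix-unit multiplication rule $E_{kl}E_{mn}=\delta_{lm}E_{kn}$ together with the inner products $(\epsilon_k,\alpha_j)=\delta_{kj}-\delta_{k,j+1}$ for $j<n$ and $(\epsilon_k,\alpha_n)=2\delta_{kn}$. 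Two structural facts organize the bookkeeping: (i) $T_1(\omega_i)$ and $T_1(\omega_i')$ are invertible diagonal matrices whose $(k,k)$- and $(2n{+}1{-}k,2n{+}1{-}k)$-entries are mutually inverse, and (ii) each of $T_1(e_i),T_1(f_i)$ is a sum of at most two matrix units with pairwise disjoint row and column supports.

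I would first dispose of the toral relations. Commutativity of the $T_1(\omega_i),T_1(\omega_j')$ is immediate since they are diagonal. For the conjugation relations, note that a diagonal $D=\sum_kD_{kk}E_{kk}$ acts by $D\,E_{ab}\,D^{-1}=D_{aa}D_{bb}^{-1}E_{ab}$; hence conjugating $T_1(e_i)$ by $T_1(\omega_j)$ scales \emph{both} of its matrix-unit summands by the single factor $D_{i+1,i+1}D_{i,i}^{-1}$ (fact (i) being exactly what forces the reflected summand to pick up the same factor), and a short case analysis -- according to whether $i<n$ or $i=n$, and the position of $i$ relative to $j$ -- identifies this factor with $r_j^{(\epsilon_j,\alpha_i)}s_j^{(\epsilon_{j+1},\alpha_i)}$; the relations for $T_1(\omega_n),T_1(\omega_n')$ and for the $f_i$ are handled the same way.

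Next I would treat $(\hat{C}4)$. For $i\ne j$, no matrix unit of $T_1(e_i)$ composes with one of $T_1(f_j)$ in either order, so $[T_1(e_i),T_1(f_j)]=0$. For $i=j<n$, a direct computation gives
\[
 [T_1(e_i),T_1(f_i)]=r^{-1}s^{-1}\bigl(E_{i+1,i+1}-E_{i,i}\bigr)+\bigl(E_{2n-i+1,2n-i+1}-E_{2n-i,2n-i}\bigr),
\]
and this equals $\bigl(T_1(\omega_i)-T_1(\omega_i')\bigr)/(r_i-s_i)$ upon using $r_i=r$, $s_i=s$ and the identity $(r^{-1}-s^{-1})/(r-s)=-(rs)^{-1}$; the case $i=n$ is the analogous one-term computation with $r_n=r^2$, $s_n=s^2$. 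The relations $[e_i,e_j]=[f_i,f_j]=0$ with $a_{ij}=0$ (equivalently $|i-j|\ge 2$) follow in the same spirit: none of the matrix units of $T_1(e_i)$ and $T_1(e_j)$ compose in either order.

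Finally, the Serre relations $(\hat{C}6)$--$(\hat{C}7)$ become almost automatic from fact (ii): disjointness of supports gives $T_1(e_i)^2=0=T_1(f_i)^2$, so each quartic relation (the ``$e_{n-1}^3e_n$'' and ``$f_nf_{n-1}^3$'' ones) holds identically, every monomial containing $e_{n-1}^2$ or $e_{n-1}^3$, while each cubic $(r,s)$-Serre relation collapses to a single identity of the form $T_1(e_ie_{i+1}e_i)=0$ (resp. $T_1(e_{i+1}e_ie_{i+1})=0$, $T_1(e_ne_{n-1}e_n)=0$, and the $f$-analogues), each immediate from the matrix-unit rule. There is no conceptual obstacle: the only genuinely delicate point is tracking the asymmetry between the coefficients $r^{-1/2}s^{-1/2}$ and $-1$ on the two summands of $T_1(e_i)$ under the folding $k\mapsto 2n{+}1{-}k$, so that in particular the two diagonal pieces of the Cartan commutator above -- which carry the unequal factors $(rs)^{-1}$ and $1$ -- nonetheless assemble correctly into $(\omega_i-\omega_i')/(r_i-s_i)$. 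In the write-up I would treat the interior indices $1\le i\le n-2$ uniformly and then handle $i=n-1,n$ separately, since there $T_1(e_n),T_1(f_n)$ reduce to a single matrix unit.
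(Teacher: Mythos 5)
Your proposal is correct and takes essentially the same route as the paper, namely a direct verification of the finite-type defining relations by matrix-unit arithmetic; the key points you isolate (the inverse-pairing of the $(k,k)$ and $(2n{+}1{-}k,2n{+}1{-}k)$ diagonal entries forcing a single conjugation factor, the identity $[T_1(e_i),T_1(f_i)]=(T_1(\omega_i)-T_1(\omega_i'))/(r_i-s_i)$ via $(r^{-1}-s^{-1})/(r-s)=-(rs)^{-1}$, and $T_1(e_i)^2=0$ collapsing the Serre relations) all check out. The one item the paper's proof records that you omit is the justification for calling this \emph{the} vector representation: you should add that $\mathbf{e}_1$ is killed by every $T_1(f_j)$ and satisfies $T_1(\omega_i^{-1})\mathbf{e}_1=r^{\delta_{1i}}\mathbf{e}_1$ for $i<n$ and $T_1(\omega_n^{-1})\mathbf{e}_1=r^{-1}s^{-1}\mathbf{e}_1$, so that $T_1$ is the irreducible type-1 module of highest weight $\lambda_1$ with respect to the simple roots $-\alpha_1,\dots,-\alpha_n$.
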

\begin{proof}
	By straightforward calculations one checks that the preceding assignment
	defines an irreducible weight representation. For the basis vector $\mathbf{e}_1=(1,0, \cdots, 0)$ it is easy to see that $T_1\left(w_i^{-1}\right) \mathbf{e}_1=r^{\delta_{1 i}} \mathbf{e}_1$ for $1 \leq i \leq n-1$, $T_1\left(w_n^{-1}\right) \mathbf{e}_1=r^{-1} s^{-1} \mathbf{e}_1$ and $T_1\left(f_j\right) \mathbf{e}_1=0$ for $1 \leq i \leq n$. Hence $T_1$ is the type 1 representation with the highest weight $\lambda=(1,0, \cdots, 0)$ with respect to the simple roots $-\alpha_1, \cdots,-\alpha_n$.
	\begin{remark}
		If we replace $T_1\left(w_i^{-1}\right)$ with $T_1\left(w_i\right)$,  $T_1\left(w_i^{\prime-1}\right)$ with $T_1\left(w_i^{\prime}\right)$ and $T_1\left(e_i\right)$ with $T_1\left(f_i\right)$, we must change Gaussian decomposition accordingly, which ensures the correspondence between Drinfeld and Drinfeld-Jimbo realizations.
	\end{remark}
\end{proof}
			\begin{theorem} The $R$-matrix associated with $T_1$ is given by
	\begin{flalign*}
					& R=r^{\frac{1}{2}} s^{-\frac{1}{2}} \sum_{i=1}^{2 n} E_{i i} \otimes E_{i i}+r^{-\frac{1}{2}} s^{\frac{1}{2}} \sum_{i=1}^{2 n} E_{i i} \otimes E_{i^{'} i^{'}}+r^{\frac{1}{2}} s^{\frac{1}{2}}\left\{\sum_{\substack{1 \leq i \leq n-1 \\
							i+1 \leq j \leq n}} E_{i i} \otimes E_{j j}\right. \\
					& +\sum_{\substack{1 \leq i \leq n-1 \\
							i^{'}+1 \leq j \leq 2 n}} E_{i i} \otimes E_{j j}+\sum_{j=n+2}^{2 n} E_{n n} \otimes E_{j j}+\sum_{\substack{n+1 \leq j \leq 2 n-1 \\
							j+1 \leq i \leq 2 n}} E_{i i} \otimes E_{j j} \\
					& \left.+\sum_{\substack{1 \leq i \leq n-1 \\
							n+1 \leq j \leq 2 n-i}} E_{j j} \otimes E_{i i}\right\}+r^{-\frac{1}{2}} s^{-\frac{1}{2}}\left\{\sum_{\substack{1 \leq i \leq n-1 \\
							i+1 \leq j \leq n}} E_{j j} \otimes E_{i i}+\sum_{\substack{1 \leq i \leq n-1 \\
							i^{'}+1 \leq j \leq 2 n}} E_{j j} \otimes E_{i i}\right.
	\end{flalign*}
	\begin{flalign*}
	& \left.+\sum_{j=n+2}^{2 n} E_{j j} \otimes E_{n n}+\sum_{\substack{n+1 \leq j \leq 2 n-1 \\
			j+1 \leq i \leq 2 n}} E_{j j} \otimes E_{i i}+\sum_{\substack{1 \leq i \leq n-1 \\
			n+1 \leq j \leq 2 n-i}} E_{i i} \otimes E_{j j}\right\} \\
	& +\left(r^{\frac{1}{2}} s^{-\frac{1}{2}}-r^{-\frac{1}{2}} s^{\frac{1}{2}}\right)\left\{\sum_{i<j} E_{i j} \otimes E_{j i}-\sum_{i>j}\left(r^{\frac{1}{2}} s^{-\frac{1}{2}}\right)^{\bar{\imath}-\bar{\jmath}} \varepsilon_i \varepsilon_j E_{i^{\prime} j^{\prime}} \otimes E_{i j}\right\},
	\end{flalign*}
where
$$
\varepsilon_i=\left\{\begin{aligned}
	1, & \quad\text { for } \quad i=1, \ldots, n, \\
	-1, & \quad\text { for } \quad i=n+1, \ldots, 2 n,
\end{aligned}\right.
$$
and $(\overline{1}, \overline{2}, \ldots, \overline{2 n})=(n, n-1, \ldots, 1,-1, \ldots,-n)$.
\end{theorem}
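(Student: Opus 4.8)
The plan is to compute the $R$-matrix directly from its universal characterization: $R_{1,1}$ must intertwine the coproduct with its opposite on the tensor square of the vector representation, i.e. $R\,(T_1\otimes T_1)\Delta(x) = (T_1\otimes T_1)\Delta^{\mathrm{op}}(x)\,R$ for all generators $x$, together with the normalization fixed by the action on a highest weight vector $v_1\otimes v_1$. Since a two-parameter $R$-matrix of this shape is determined by these conditions up to scalar, it suffices to verify them on the explicit formula in the statement; alternatively one can build $R$ from scratch by the standard Jimbo-type ansatz. I would first record the weight-space decomposition of $V\otimes V$ under the torus generated by the $\omega_i,\omega_i'$: the diagonal entries $E_{ii}\otimes E_{ii}$, the "opposite-index" entries $E_{ii}\otimes E_{i'i'}$ (where $i'=2n-i+1$), and the remaining distinct-index diagonal pairs split into the two families governed by the ordering of $\bar\imath,\bar\jmath$; this explains the four scalar blocks $r^{1/2}s^{-1/2}$, $r^{-1/2}s^{1/2}$, $r^{1/2}s^{1/2}$, $r^{-1/2}s^{-1/2}$ appearing on the diagonal part.

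Next I would pin down the off-diagonal part. Acting with $T_1(e_i)$ (and $T_1(f_i)$) on the diagonal part of $R$ forces the coefficient of the "permutation" terms $E_{ij}\otimes E_{ji}$ for $i<j$, and the intertwining relation for $e_n$, together with the $C_n$ folding $i\mapsto i'$, forces the "long-range" correction terms $E_{i'j'}\otimes E_{ij}$ for $i>j$; the signs $\varepsilon_i\varepsilon_j$ and the power $(r^{1/2}s^{-1/2})^{\bar\imath-\bar\jmath}$ come precisely from the entries of $T_1(\omega_n), T_1(\omega_n')$ and from the mixed coproduct terms $e_i\otimes\omega_i$, $\omega_i'\otimes f_i$. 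Concretely I would: (1) check the highest-weight normalization $R(v_1\otimes v_1)=r^{1/2}s^{-1/2}\,v_1\otimes v_1$; (2) verify the torus-intertwining, which is immediate since every displayed term is a weight vector of weight $0$ for $\Delta^{\mathrm{op}}-\Delta$ on the relevant weight of $V\otimes V$; (3) verify the $e_i,f_i$ intertwining for $1\le i\le n-1$ using the $\mathfrak{gl}_N$-type sub-$R$-matrix (this is essentially the type $A$ computation of Jing--Liu for $U_{r,s}(\widehat{\mathfrak{gl}}_n)$, cited as \cite{JL2}); (4) verify the remaining relations for $e_n,f_n$, which mix the two halves of $V$ and produce the symplectic correction terms.

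The main obstacle is step (4): the intertwining equation for $e_n$ and $f_n$ couples the "permutation" block $\sum_{i<j}E_{ij}\otimes E_{ji}$ to the "symplectic" block $\sum_{i>j}(r^{1/2}s^{-1/2})^{\bar\imath-\bar\jmath}\varepsilon_i\varepsilon_j E_{i'j'}\otimes E_{ij}$, and getting the exponents $\bar\imath-\bar\jmath$ and the sign pattern to match requires carefully tracking how $T_1(\omega_n)$ and $T_1(\omega_n')$ act differently on the middle indices $n,n+1$ versus the outer ones — this is exactly where the two-parameter case diverges from the one-parameter $R$-matrix of Jimbo, and where the "extra terms" mentioned in the introduction appear. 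I would organize this by checking the intertwining relation weight-space by weight-space of $V\otimes V$: for each pair $(k,l)$ the relation $R(T_1\otimes T_1)\Delta(e_n) = (T_1\otimes T_1)\Delta^{\mathrm{op}}(e_n)R$ restricted to the image in the $(k,l)$ weight space becomes a finite linear identity in $r,s$ among at most four terms, which can be verified case by case (indices both $\le n$, both $\ge n+1$, straddling $n$); the tedious but routine part is the bookkeeping of which of $E_{n+1,n}$-type matrix units contributes, and I would relegate that to a lemma-style table rather than writing it out inline.
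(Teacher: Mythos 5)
Your overall strategy coincides with the paper's: verify $\Delta^{\mathrm{op}}(x)R=R\Delta(x)$ generator by generator on $T_1\otimes T_1$, with the Cartan relations trivial and the $f_i$ relations obtained from the $e_i$ ones via the anti-automorphism $\tau$ together with the flip. The one concrete misstep is where you locate the difficulty. You defer all coupling with the symplectic block $\sum_{i>j}(r^{1/2}s^{-1/2})^{\bar{\imath}-\bar{\jmath}}\varepsilon_i\varepsilon_j E_{i'j'}\otimes E_{ij}$ to the $e_n,f_n$ relations and propose to dispose of $1\le i\le n-1$ by the type-$A$ sub-$R$-matrix computation of \cite{JL2}. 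That reduction does not work: for $i\le n-1$ one has $T_1(e_i)=r^{-1/2}s^{-1/2}E_{i+1,i}-E_{2n-i+1,2n-i}$, which already straddles the two halves of $V$ because of the $C_n$ folding, so the intertwining identity for these $e_i$ mixes the permutation terms with the $E_{i'j'}\otimes E_{ij}$ terms. The nontrivial identities in the paper's proof (those involving the exponents $\rho_1,\dots,\rho_5$) are exactly these cross-block checks for $i\le n-1$; by contrast $T_1(e_n)=r^{-1/2}s^{-1/2}E_{n+1,n}$ is a single matrix unit and its relation is comparatively mild. Your weight-space-by-weight-space verification, applied to every $e_i$ rather than only to $e_n$, repairs this; as written, step (3) would be incomplete.

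A secondary caveat: in the finite case $V\otimes V$ decomposes into three pairwise non-isomorphic summands, so operators intertwining $\Delta$ with $\Delta^{\mathrm{op}}$ form a space of dimension greater than one, and the normalization on $v_1\otimes v_1$ does not by itself single out $R$. The uniqueness-up-to-a-scalar-function statement the paper invokes is for the spectral-parameter $R$-matrix in Section 4, not for $R_{1,1}$. This does not affect the verification itself, which is all the paper carries out, but the phrase ``determined by these conditions up to scalar'' should not be relied on as the logical justification for why verifying the explicit formula suffices.
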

\begin{proof}
	For $i=1,2, \cdots ,n-1$, we have $\Delta\left(e_i\right)=e_i \otimes \omega_i+1 \otimes e_i$ with the matrix form given by
\begin{equation}\label{e:Delta-ei}
\begin{aligned}
	&r^{-\frac{3}{2}} s^{-\frac{1}{2}} E_{i, i} \otimes E_{i+1, i}+r^{-\frac{1}{2}} s^{-\frac{3}{2}} E_{i+1, i+1} \otimes E_{i+1, i}+r^{-\frac{1}{2}} s^{\frac{1}{2}} E_{2 n-i, 2 n-i} \otimes E_{i+1, i}\\
	&+r^{\frac{1}{2}} s^{-\frac{1}{2}} E_{2 n-i+1,2 n-i+1} \otimes E_{i+1, i}+r^{-\frac{1}{2}} s^{-\frac{1}{2}} \sum_{j \neq\{i, i+1,2 n-i, 2 n-i+1\}} E_{j, j} \otimes E_{i+1, i}\\
	&-r^{-1} E_{i, i} \otimes E_{2 n-i+1,2 n-i}-s^{-1} E_{i+1, i+1} \otimes E_{2 n-i+1,2 n-i}-s E_{2 n-i, 2 n-i} \otimes E_{2 n-i+1,2 n-i}\\
	&-r E_{2 n-i+1,2 n-i+1} \otimes E_{2 n-i+1,2 n-i}-\sum_{j \neq\{i, i+1,2 n-i, 2 n-i+1\}} E_{j, j} \otimes E_{2 n-i+1,2 n-i}\\
	&+r^{-\frac{1}{2}} s^{-\frac{1}{2}} E_{i+1, i} \otimes\left(\sum_{j=1}^{2 n} E_{j, j}\right)-E_{2 n-i+1,2 n-i} \otimes\left(\sum_{j=1}^{2 n} E_{j, j}\right).
\end{aligned}
\end{equation}	
and $\Delta^{c o p}\left(e_i\right)=\omega_i \otimes e_i+e_i \otimes 1$, whose matrix is the flip of \eqref{e:Delta-ei}.
Here the flip means $A\otimes B\mapsto B\otimes A$ term by term.
%$$
%r^{-\frac{3}{2}} s^{-\frac{1}{2}} E_{i+1, i} \otimes E_{i, i}+r^{-\frac{1}{2}} s^{-\frac{3}{2}} E_{i+1, i} \otimes E_{i+1, i+1}+r^{-\frac{1}{2}} s^{\frac{1}{2}} E_{i+1, i} \otimes E_{2 n-i, 2 n-i}
%$$
%$$
%+r^{\frac{1}{2}} s^{-\frac{1}{2}} E_{i+1, i} \otimes E_{2 n-i+1,2 n-i+1}+r^{-\frac{1}{2}} s^{-\frac{1}{2}} E_{i+1, i} \otimes \sum_{j \neq\{i, i+1,2 n-i, 2 n-i+1\}} E_{j, j}
%$$
%$$
%-r^{-1} E_{2 n-i+1,2 n-i} \otimes E_{i, i}-s^{-1} E_{2 n-i+1,2 n-i} \otimes E_{i+1, i+1}-s E_{2 n-i+1,2 n-i} \otimes E_{2 n-i, 2 n-i}
%$$
%$$
%-r E_{2 n-i+1,2 n-i} \otimes E_{2 n-i+1,2 n-i+1}-E_{2 n-i+1,2 n-i} \otimes \sum_{j \neq\{i, i+1,2 n-i, 2 n-i+1\}} E_{j, j}
%$$
%$$
%+r^{-\frac{1}{2}} s^{-\frac{1}{2}}\left(\sum_{j=1}^{2 n} E_{j, j}\right) \otimes E_{i+1, i}-\left(\sum_{j=1}^{2 n} E_{j, j}\right) \otimes E_{2 n-i+1,2 n-i}.
%$$
Then we can check that
$\Delta^{c o p}\left(e_i\right) R=R \Delta\left(e_i\right)$, for $1\leq i\leq n-1$. Here we only consider nontrivial equations.

To see this, note that for $i=1,2, \cdots ,n-1$, one has that
$$
\left(r^{-\frac{1}{2}} s^{-\frac{3}{2}} E_{i+1, i} \otimes E_{i+1, i+1}\right)\left(r^{\frac{1}{2}} s^{-\frac{1}{2}}-r^{-\frac{1}{2}} s^{\frac{1}{2}}\right)\left(E_{i, i+1} \otimes E_{i+1, i}\right)
$$
$$
+\left(r^{-\frac{1}{2}} s^{-\frac{1}{2}} E_{i+1, i+1} \otimes E_{i+1, i}\right)\left(r^{-\frac{1}{2}} s^{-\frac{1}{2}} E_{i+1, i+1} \otimes E_{i, i}\right)
$$
$$
=\left(r^{\frac{1}{2}} s^{-\frac{1}{2}} E_{i+1, i+1} \otimes E_{i+1, i+1}\right)\left(r^{-\frac{1}{2}} s^{-\frac{3}{2}} E_{i+1, i+1} \otimes E_{i+1, i}\right),
$$

Also we compute that
$$
\left(r^{\frac{1}{2}} s^{-\frac{1}{2}} E_{i+1, i} \otimes E_{2 n-i+1,2 n-i+1}\right)\left(r^{\frac{1}{2}} s^{-\frac{1}{2}}-r^{-\frac{1}{2}} s^{\frac{1}{2}}\right)\left[\left(1+\left(r^{\frac{1}{2}} s^{-\frac{1}{2}}\right)^{\rho_1}\right) E_{i, 2 n-i+1} \otimes E_{2 n-i+1, i}\right]
$$
$$
+\left(-E_{i+1, i+1} \otimes E_{2 n-i+1,2 n-i}\right)\left(r^{\frac{1}{2}} s^{-\frac{1}{2}}-r^{-\frac{1}{2}} s^{\frac{1}{2}}\right)\left[\left(r^{\frac{1}{2}} s^{-\frac{1}{2}}\right)^{\rho_2} E_{i+1,2 n-i+1} \otimes E_{2 n-i, i}\right]
$$
$$
=\left[\left(r^{\frac{1}{2}} s^{-\frac{1}{2}}-r^{-\frac{1}{2}} s^{\frac{1}{2}}\right) E_{i+1,2 n-i+1} \otimes E_{2 n-i+1, i+1}\right]\left(r^{\frac{1}{2}} s^{-\frac{1}{2}} E_{2 n-i+1,2 n-i+1} \otimes E_{i+1, i}\right),
$$
where $
\rho_1=\overline{2 n-i+1}-\bar{i}$ and $\rho_2=\overline{2 n-i}-\bar{i}.$\\

Next, for $i=1,2, \cdots ,n-1, j \neq 2 n-i, 2 n-i+1$, and $j>i+1,$
$$
\left(r^{-\frac{1}{2}} s^{-\frac{1}{2}} E_{i+1, i} \otimes E_{j, j}\right)\left[\left(r^{\frac{1}{2}} s^{-\frac{1}{2}}-r^{-\frac{1}{2}} s^{\frac{1}{2}}\right) E_{i, j} \otimes E_{j, i}\right]
$$
$$
=\left[\left(r^{\frac{1}{2}} s^{-\frac{1}{2}}-r^{-\frac{1}{2}} s^{\frac{1}{2}}\right) E_{i+1, j} \otimes E_{j, i+1}\right]\left(r^{-\frac{1}{2}} s^{-\frac{1}{2}} E_{j, j} \otimes E_{i+1, i}\right),
$$
and for $i=1,2, \cdots ,n-2$,
$$
\left(-E_{i+1, i+1} \otimes E_{2 n-i+1,2 n-i}\right)\left(r^{-\frac{1}{2}} s^{\frac{1}{2}} E_{i+1, i+1} \otimes E_{2 n-i, 2 n-i}\right)
$$
$$
+\left(r^{\frac{1}{2}} s^{-\frac{1}{2}} E_{i+1, i} \otimes E_{2 n-i+1,2 n-i+1}\right)\left[-\left(r^{\frac{1}{2}} s^{-\frac{1}{2}}-r^{-\frac{1}{2}} s^{\frac{1}{2}}\right)\left(r^{\frac{1}{2}} s^{-\frac{1}{2}}\right)^{\rho_3} E_{i, i+1} \otimes E_{2 n-i+1,2 n-i}\right]
$$
$$
=\left(r^{\frac{1}{2}} s^{\frac{1}{2}} E_{i+1, i+1} \otimes E_{2 n-i+1,2 n-i+1}\right)\left(-s^{-1} E_{i+1, i+1} \otimes E_{2 n-i+1,2 n-i}\right),
$$
where $\rho_3=\overline{2 n-i+1}-\overline{2 n-i}.$\\

 Finally, we can check that for $i=1,2, \cdots ,n-1,$
$$
\left(-E_{i+1, i+1} \otimes E_{2 n-i+1,2 n-i}\right)\left(r^{\frac{1}{2}} s^{-\frac{1}{2}}-r^{-\frac{1}{2}} s^{\frac{1}{2}}\right)\left[\left(1+\left(r^{\frac{1}{2}} s^{-\frac{1}{2}}\right)^{\rho_4}\right) E_{i+1,2 n-i} \otimes E_{2 n-i, i+1}\right]
$$
$$
+\left(r^{\frac{1}{2}} s^{-\frac{1}{2}} E_{i+1, i} \otimes E_{2 n-i+1,2 n-i+1}\right)\left(r^{\frac{1}{2}} s^{-\frac{1}{2}}-r^{-\frac{1}{2}} s^{\frac{1}{2}}\right)\left[\left(r^{\frac{1}{2}} s^{-\frac{1}{2}}\right)^{\rho_5} E_{i, 2 n-i} \otimes E_{2 n-i+1, i+1}\right]
$$
$$
=\left[\left(r^{\frac{1}{2}} s^{-\frac{1}{2}}-r^{-\frac{1}{2}} s^{\frac{1}{2}}\right) E_{i+1,2 n-i+1} \otimes E_{2 n-i+1, i+1}\right]\left(-E_{2 n-i+1,2 n-i} \otimes E_{i+1, i+1}\right),
$$
and also
$$
\left(-s^{-1} E_{2 n-i+1,2 n-i} \otimes E_{i+1, i+1}\right)\left(r^{-\frac{1}{2}} s^{\frac{1}{2}} E_{2 n-i, 2 n-i} \otimes E_{i+1, i+1}\right)
$$
$$
=\left(r^{-\frac{1}{2}} s^{-\frac{1}{2}} E_{2 n-i+1,2 n-i+1} \otimes E_{i+1, i+1}\right)\left(-E_{2 n-i+1,2 n-i} \otimes E_{i+1, i+1}\right).
$$
These relations  imply that the $R$-matrix does satisfy $\Delta^{c o p}\left(e_i\right) R=R \Delta\left(e_i\right)$,for $1\leq i\leq n-1$.
Similarly we can check that $
\Delta^{c o p}\left(e_n\right) R=R \Delta\left(e_n\right)$ holds.  Note that the corresponding equations for $f_i$ immediately follow
by $\tau$ and the flip operation on tensor product. Finally, the equations $\Delta^{c o p}\left(w_i\right) R=R \Delta\left(w_i\right)$ and $\Delta^{c o p}\left(w_i^{\prime}\right) R=R \Delta\left(w_i^{\prime}\right)$ are trivial, thus we have shown that the given $R$-matrix is correct.
%for $i=1,2 \cdots n,$ which is trivial.
\end{proof}

	\section{FRT-REALIZATION OF $U_{r, s}\left(\mathrm{C}_n\right)$}
	In this section, we give the Faddeev-Reshetikhin-Takhtajan
	and Drinfeld-Jimbo presentations of $U_{r, s}\left(\mathfrak{sp}_{2n}\right)$, which is more involved than the type $A$ case \cite{JL1}. First, let's give some basic definitions and notations.
	 	\begin{defi}
	 		$U(R)$ is an unital associative algebra with generators $\ell_{i j}^{+}, \ell_{j i}^{-}$ $1 \leq i \leq 2 n$ subject to the following relations. Let $L^{ \pm}=\left(\ell_{i j}^{ \pm}\right), 1 \leq i, j \leq 2 n$, with
\begin{align*}
&\ell_{i j}^{+}=\ell_{j i}^{-}=0,\\
&\ell^{\pm}_{ii}(\ell^{\pm}_{ii})^{-1}=(\ell^{\pm}_{ii})^{-1}\ell^{\pm}_{ii}=1,
\end{align*}
for $1 \leq j<i \leq 2 n$. The defining relations are given in matrix form:
	 		$$
	 		R L_1^{ \pm} L_2^{ \pm}=L_2^{ \pm} L_1^{ \pm} R, \quad R L_1^{+} L_2^{-}=L_2^{-} L_1^{+}R,
	 		$$
	 		where $L_1^{ \pm}=L^{ \pm} \otimes 1, L_2^{ \pm}=1 \otimes L^{ \pm}$.
	 		\end{defi}
Since the diagonal elements of $L^{\pm}$ are invertible, the inverse $\left(L^{ \pm}\right)^{-1}$ exists and its matrix elements belong to
$U(R)$. It follows from the relations between $L_1^{ \pm}$ and $L_2^{ \pm}$ that the following holds.
	 	\begin{proposition}
	 		Let $L^{\prime \pm}=\left(L^{ \pm}\right)^{-1}, L^{\prime \prime \pm}=\left(\left(L^{\prime \pm}\right)^t\right)^{-1}.$ Then
	 		$$
	 		\begin{aligned}
	 			R_{21} L_1^{\prime \pm} L_2^{\prime \pm} & =L_2^{\prime \pm} L_1^{\prime \pm} R_{21}, \quad R_{21} L_1^{\prime+} L_2^{\prime-}=L_2^{\prime-} L_1^{\prime+} R_{21}, \\
	 			R L_1^{\prime \prime \pm} L_2^{\prime \prime \pm} & =L_2^{\prime \prime \pm} L_1^{\prime \prime} R, \quad R L_1^{\prime \prime+} L_2^{\prime \prime-}=L_2^{\prime \prime-} L_1^{\prime \prime+} R.
	 		\end{aligned}
	 		$$
	 \end{proposition}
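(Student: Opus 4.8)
The plan is to read off every identity in the statement from the defining relations of $U(R)$ by purely formal manipulations: inverting the matrices $L^{\pm}$, conjugating by the flip, and transposing combined with the crossing symmetry of the $R$-matrix constructed above. Throughout, write $\mathcal{P}\in\mathrm{End}(V\otimes V)$ for the flip $v\otimes w\mapsto w\otimes v$, so that $\mathcal{P}R\mathcal{P}=R_{21}$ and $\mathcal{P}X_1\mathcal{P}=X_2$ for any matrix $X$ with entries in $U(R)$. Note that $R$ is invertible, and that $L^{\pm}$, being triangular with invertible diagonal entries $\ell_{ii}^{\pm}$, is invertible over $U(R)$ with $(L_k^{\pm})^{-1}=L_k'^{\pm}$ for $k=1,2$.

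\emph{Relations for $L'^{\pm}=(L^{\pm})^{-1}$.} Starting from $R\,L_1^{\pm}L_2^{\pm}=L_2^{\pm}L_1^{\pm}\,R$, I would multiply in turn by $(L_2^{\pm})^{-1}$ on the left, by $(L_2^{\pm})^{-1}$ on the right, by $(L_1^{\pm})^{-1}$ on the left, and by $(L_1^{\pm})^{-1}$ on the right (equivalently, invert both sides and rearrange), reaching $L_1'^{\pm}L_2'^{\pm}\,R=R\,L_2'^{\pm}L_1'^{\pm}$. Conjugating this identity by $\mathcal{P}$ replaces $R$ by $R_{21}$ and interchanges the two tensor slots, yielding exactly $R_{21}\,L_1'^{\pm}L_2'^{\pm}=L_2'^{\pm}L_1'^{\pm}\,R_{21}$. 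The mixed identity $R_{21}\,L_1'^{+}L_2'^{-}=L_2'^{-}L_1'^{+}\,R_{21}$ comes out the same way from $R\,L_1^{+}L_2^{-}=L_2^{-}L_1^{+}\,R$ together with its $\mathcal{P}$-conjugate.

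\emph{Relations for $L''^{\pm}=((L'^{\pm})^{t})^{-1}$.} Here I would apply the partial transposes $t_1$ and $t_2$ (transpose in the first, respectively second, copy of $\mathrm{End}(V)$) to the identities for $L'$ just obtained. Since $(L_k'^{\pm})^{t_k}=(L_k''^{\pm})^{-1}$, the outcome is an identity relating $(L_1''^{\pm})^{-1}$ and $(L_2''^{\pm})^{-1}$ to the transposed matrix $R_{21}^{t_1 t_2}$. The key point is then the crossing symmetry of the $R$-matrix above: from its explicit form one checks that $R^{t_1}$ is invertible and that $R_{21}^{t_1 t_2}$ coincides, up to an overall scalar and conjugation by the diagonal matrix assembled from the signs $\varepsilon_i$ and the exponents $\bar\imath$ appearing in $R$, with $R$ itself. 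Substituting this relation and clearing the surviving inverses converts the transposed $L'$-identities into $R\,L_1''^{\pm}L_2''^{\pm}=L_2''^{\pm}L_1''^{\pm}\,R$ and $R\,L_1''^{+}L_2''^{-}=L_2''^{-}L_1''^{+}\,R$.

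The half of the argument dealing with $L'^{\pm}$ is completely routine. The real obstacle is the part dealing with $L''^{\pm}$: one must pin down the exact crossing identity for the two-parameter symplectic $R$-matrix, which is more delicate than in the one-parameter setting because the additional group-like generators of $U_{r,s}(\mathrm{C}_n)$ contribute further off-diagonal terms to $R$, so the conjugating diagonal matrix and the accompanying scalar have to be determined with care. This is the two-parameter counterpart of the corresponding step in the Ding--Frenkel \cite{DF} and Jing--Liu--Molev \cite{JLM1, JLM2, JLM3} treatments; once that identity is recorded, all the relations of the proposition follow by formal substitution.
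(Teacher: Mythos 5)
The paper itself gives no argument for this proposition (it is stated as an immediate consequence of the defining relations), so your proposal has to be judged on its own. The half concerning $L''^{\pm}$ contains a genuine gap, and it is not just an omitted verification: the crossing identity you invoke is false for this $R$-matrix. You assert that $R_{21}^{t_1t_2}$ coincides with $R$ up to an overall scalar and conjugation by a diagonal matrix. But conjugation by any $D_1\otimes D_2$ with $D_1,D_2$ diagonal fixes every term $E_{ii}\otimes E_{jj}$, and an overall scalar rescales all entries uniformly, while the diagonal parts of $R$ and $R_{21}^{t_1t_2}$ differ non-uniformly: the coefficient of $E_{11}\otimes E_{22}$ is $r^{1/2}s^{1/2}$ in $R$ but $r^{-1/2}s^{-1/2}$ in $R_{21}$ (hence in $R_{21}^{t_1t_2}$, since partial transposition does not touch diagonal matrix units), whereas the coefficient of $E_{11}\otimes E_{11}$ is $r^{1/2}s^{-1/2}$ in both. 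The discrepancy factor $(rs)^{\pm 1}$ on some diagonal entries and $1$ on others cannot be absorbed by a scalar times a diagonal conjugation. (In the one-parameter specialization $s=r^{-1}$ these factors collapse to $1$ and one can check $R_{21}^{t_1t_2}=R$ on the nose, which is presumably the source of your expectation; this is precisely one of the places where the second parameter changes the structure.) Since the $L''$ relations are the only nontrivial content of the proposition, your argument does not establish them; a correct proof needs a genuinely different crossing relation for the two-parameter symplectic $R$-matrix, e.g. one involving distinct left and right diagonal factors or the substitution $r\mapsto s^{-1}$, $s\mapsto r^{-1}$, and this has to be worked out, not merely announced.

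A secondary point concerns the mixed relation in the $L'$ half. Inverting $R\,L_1^{+}L_2^{-}=L_2^{-}L_1^{+}R$ gives $R\,L_2'^{-}L_1'^{+}=L_1'^{+}L_2'^{-}R$, and conjugating by $\mathcal{P}$ then yields $R_{21}L_1'^{-}L_2'^{+}=L_2'^{+}L_1'^{-}R_{21}$, with the superscripts $+$ and $-$ interchanged relative to the stated identity. The stated form $R_{21}L_1'^{+}L_2'^{-}=L_2'^{-}L_1'^{+}R_{21}$ is equivalent, by running the same manipulations backwards, to $R\,L_1^{-}L_2^{+}=L_2^{+}L_1^{-}R$, which is not among the defining relations of $U(R)$; so it cannot "come out the same way" as you claim. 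Either the proposition carries a typo (the $-,+$ version is what the formal argument produces) or an additional input is required, and your write-up should say which. The unsigned relations $R_{21}L_1'^{\pm}L_2'^{\pm}=L_2'^{\pm}L_1'^{\pm}R_{21}$ are indeed obtained correctly by your inversion-plus-flip argument.
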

 \begin{defi}
 	$U(R)$ is a Hopf algebra with comultiplication $\Delta$ defined by
 	$$\Delta\left(L^{ \pm}\right)=L^{ \pm} \dot{\otimes} L^{ \pm},$$
 	whose entry form is given by
 	$$
 	\Delta\left(\ell_{i j}^{ \pm}\right)=\sum_{k=1}^n \ell_{i k}^{ \pm} \otimes \ell_{k j}^{ \pm}.$$ The antipode $S$ is defined by
 	$$
 	S\left(L^{ \pm}\right)=\left(L^{ \pm}\right)^{-1}.
 	$$
 	The counit is defined by
 	$$
 	\varepsilon(L)=I .
 	$$	
 \end{defi}	
	 	\begin{theorem}\label{T:C3}
	 	For $n=3$,	there is an isomorohism between $U(R)$ and $U_{r, s}\left(\mathrm{C}_3\right)$ given explicitly as follows.
\begin{align*}
&\ell_{11}^{+}\mapsto w_1^{-1} w_2^{-1}\left(w_3^{{\frac{1}{2}}}\right)^{-1}, \quad \ell_{22}^{+}\mapsto  w_2^{-1}\left(w_3^{{\frac{1}{2}}}\right)^{-1}, \quad \ell_{33}^{+}\mapsto\left(w_3^{{\frac{1}{2}}}\right)^{-1}, \\
&\ell_{44}^{+}\mapsto w_3^{\frac{1}{2}}, \quad \ell_{55}^{+}\mapsto w_2 w_3^{\frac{1}{2}}, \quad \ell_{66}^{+}\mapsto w_1 w_2 w_3^{\frac{1}{2}},\\
&\ell_{11}^{-}\mapsto\left(w_1^{\prime}\right)^{-1}\left(w_2^{\prime}\right)^{-1}\left(w_3^{\prime}\right)^{-\frac{1}{2}},\quad \ell_{22}^{-}\mapsto\left(w_2^{\prime}\right)^{-1}\left(w_3^{\prime}\right)^{-\frac{1}{2}}, \quad \ell_{33}^{-}\mapsto\left(w_3^{\prime}\right)^{-\frac{1}{2}},\\
&\ell_{44}^{-}\mapsto\left(w_3'\right)^{\frac{1}{2}}, \quad \ell_{55}^{-}\mapsto w_2^{\prime}\left(w_3'\right)^{\frac{1}{2}}, \quad \ell_{66}^{-}\mapsto w_1^{\prime} w_2^{\prime}\left(w_3'\right)^{\frac{1}{2}},\\
&\ell_{12}^{+}\mapsto(r-s) s^{-1} w_1^{-1} w_2^{-1}\left(w_3^{\frac{1}{2}}\right)^{-1} e_1, \quad \ell_{21}^{-}\mapsto-(r-s) r^{-1} f_1\left(w_1^{\prime}\right)^{-1}\left(w_2^{\prime}\right)^{-1}\left(w_3^{\prime}\right)^{-\frac{1}{2}},\\
&\ell_{23}^{+}\mapsto(r-s) s^{-1} w_2^{-1}\left(w_3^{\frac{1}{2}}\right)^{-1} e_2, \quad \ell_{32}^{-}\mapsto-(r-s) r^{-1} f_2\left(w_2^{\prime}\right)^{-1}\left(w_3^{\prime}\right)^{-\frac{1}{2}},\\
&\ell_{34}^{+}\mapsto\left(r^2-s^2\right) s^{-1}\left(w_3^{\frac{1}{2}}\right)^{-1} e_3, \quad \ell_{43}^{-}\mapsto-\left(r^2-s^2\right) s^{-1} f_3\left(w_3^{\prime}\right)^{-\frac{1}{2}},\\
&\ell_{45}^{+}\mapsto-(r-s) w_3^{\frac{1}{2}} e_2, \quad \ell_{54}^{-}\mapsto(r-s) f_2\left(w_3^{\prime}\right)^{\frac{1}{2}},\\
&\ell_{56}^{+}\mapsto-(r-s) w_2 w_3^{\frac{1}{2}} e_1, \quad \ell_{65}^{-}\mapsto(r-s) f_1 w_2^{\prime}\left(w_3^{\prime}\right)^{\frac{1}{2}}.
\end{align*}
\begin{proof} We  remark that the anti-automorphism $\ell^+_{ij}\mapsto \ell^-_{ji}$ corresponds to $e_i\mapsto f_i, \omega_i\mapsto \omega_i', r\mapsto s$.
So the relations can be grouped into two sets, and we only need to verify one set.
Recall that $L^{+}=(\ell^+_{ij})_{6\times 6}$ (resp. $L^{-}=(\ell^-_{ij})_{6\times 6}$) is an upper (resp. lower) triangular matrix, then
	 		%$$
%	 		L^{+}=\left(\begin{array}{cccc}
%	 			l_{11}^{+} & l_{12}^{+} & \cdots & l_{16}^{+} \\
%	 			0 & l_{22}^{+} & \ddots & \vdots \\
%	 			\vdots & \ddots & \ddots & l_{56}^{+} \\
%	 			0 & \cdots & 0 & l_{66}^{+}
%	 		\end{array}\right)_{6 \times 6}, \quad L^{-}=\left(\begin{array}{cccc}
%	 			l_{11}^{-} & 0 & \cdots & 0 \\
%	 			l_{21}^{-} & l_{22}^{-} & \ddots & \vdots \\
%	 			\vdots & \ddots & \ddots & 0 \\
%	 			l_{61}^{-} & \cdots & l_{65}^{-} & l_{66}^{-}
%	 		\end{array}\right)_{6 \times 6},
%	 		$$
	 		$$
	 		L_1^{+}=\left(\begin{array}{cccc}
	 			\ell_{11}^{+} I_6 & \ell_{12}^{+} I_6 & \cdots & \ell_{16}^{+} I_6 \\
	 			0 & \ell_{22}^{+} I_6 & \ddots & \vdots \\
	 			\vdots & \ddots & \ddots & \ell_{56}^{+} I_6 \\
	 			0 & \cdots & 0 & \ell_{66}^{+} I_6
	 		\end{array}\right)_{36 \times 36} \quad, \quad L_1^{-}=\left(\begin{array}{cccc}
	 			\ell_{11}^{-} I_6 & 0 & \cdots & 0 \\
	 			\ell_{21}^{-} I_6 & l_{22}^{-} I_6 & \ddots & \vdots \\
	 			\vdots & \ddots & \ddots & 0 \\
	 			\ell_{61}^{-} I_6 & \cdots & \ell_{65}^{-} I_6 & \ell_{66}^{-} I_6
	 		\end{array}\right)_{36 \times 36}.
	 		$$
Note that $L_2^{\pm}=diag(L^{\pm}, \ldots, L^{\pm})_{36}$.
	 		%$$
%	 		L_2^{+}=\left(\begin{array}{cccc}
%	 			L^{+} & 0 & \cdots & 0 \\
%	 			0 & L^{+} & \ddots & \vdots \\
%	 			\vdots & \ddots & \ddots & 0 \\
%	 			0 & \cdots & 0 & L^{+}
%	 		\end{array}\right)_{36 \times 36}, \quad L_2^{-}=\left(\begin{array}{cccc}
%	 			L^{-} & 0 & \cdots & 0 \\
%	 			0 & L^{-} & \ddots & \vdots \\
%	 			\vdots & \ddots & \ddots & 0 \\
%	 			0 & \cdots & 0 & L^{-}
%	 		\end{array}\right)_{36 \times 36},
%	 		$$
Applying $R L_1^{+} L_2^{-}=L_2^{-} L_1^{+}R$ to the vector $(v_i\otimes v_j)$,  we get $36$ sets of relations among $\ell^{\mp}_{ij}$.   We list only some typical relations.
For example, $R L_1^{+} L_2^{-}(v_1 \otimes v_j)=L_2^{-} L_1^{+} R(v_1 \otimes v_j)$ leads to the following relations:
\begin{align*}
&\ell_{11}^{+} \ell_{ii}^{-}=\ell_{ii}^{-} \ell_{11}^{+}, \qquad i=1, \ldots, 6,\\
&\ell_{11}^{+} \ell_{32}^{-}=\ell_{32}^{-} \ell_{11}^{+}, \quad \ell_{11}^{+} \ell_{43}^{-}=r s \ell_{43}^{-} \ell_{11}^{+},\\
&\ell_{11}^{+} \ell_{54}^{-}=\ell_{54}^{-} \ell_{11}^{+}, \quad \ell_{11}^{+} \ell_{65}^{-}=s^{-1} \ell_{65}^{-} \ell_{11}^{+}.
\end{align*}
The relation $R L_1^{+} L_2^{-}(v_2 \otimes v_j)=L_2^{-} L_1^{+} R(v_2 \otimes v_j)$ implies  the following commutation relations.
 %{\color{red} Need to simplify the raltions or combine the relations}
\begin{align*}
&\ell_{22}^{+} \ell_{i i}^{-}=\ell_{i i}^{-} \ell_{22}^{+}, \quad i=1, \ldots, 6,\\
&\ell_{12}^{+} \ell_{43}^{-}=r^{-1} s^{-1} \ell_{43}^{-} \ell_{12}^{+}, \quad \ell_{22}^{+} \ell_{43}^{-}=r^{-1} s^{-1} \ell_{43}^{-} \ell_{22}^{+},\\
&\ell_{22}^{+} \ell_{21}^{-}=r^{-1} \ell_{21}^{-} \ell_{22}^{+}, \quad \ell_{22}^{+} \ell_{65}^{-}=r^{-1} \ell_{65}^{-} \ell_{22}^{+},\\
&\ell_{12}^{+} \ell_{22}^{-}=s^{-1} \ell_{22}^{-} \ell_{12}^{+}, \quad \ell_{22}^{+} \ell_{32}^{-}=s^{-1} l_{32}^{-} \ell_{22}^{+},\\
&\ell_{12}^{+} \ell_{66}^{-}=r \ell_{66}^{-} \ell_{12}^{+}, \quad \ell_{12}^{+} \ell_{55}^{-}=s \ell_{55}^{-} \ell_{12}^{+}.
\end{align*}

Similarly, the equations $R L_1^{+} L_2^{-}\left(v_i \otimes v_j\right)=L_2^{-} L_1^{+} R\left(v_i \otimes v_j\right), 3 \leq i \leq 6,1 \leq j \leq 6$, and the equations $R L_1^{ \pm} L_2^{ \pm}\left(v_i \otimes v_j\right)=L_2^{ \pm} L_1^{ \pm} R\left(v_i \otimes v_j\right), 1 \leq i \leq 6,1 \leq j \leq 6$
will give all so-called first type relations: those among $\ell^+_{ki}$ and $\ell^-_{lj}$. %We omit the detailed verifications.

Next, we consider the second type relations involving the elements $\ell^{\pm}_{i,i+1}$ and $\ell^{\pm}_{i+1,i}$ etc.
%{\color{red} Simplify the relations}
$$
(r-s) \ell_{22}^{+} \ell_{11}^{-}+r s \ell_{12}^{+} \ell_{21}^{-}=(r-s) \ell_{22}^{-} \ell_{11}^{+}+\ell_{21}^{-} \ell_{12}^{+},
$$
$$
(r-s) \ell_{33}^{+} \ell_{22}^{-}+r s \ell_{23}^{+} \ell_{32}^{-}=(r-s) \ell_{33}^{-} \ell_{22}^{+}+\ell_{32}^{-} \ell_{23}^{+},
$$
$$
(r-s)\left(1+r s^{-1}\right) \ell_{44}^{+} \ell_{33}^{-}+s \ell_{34}^{+} \ell_{43}^{-}=(r-s)\left(1+r s^{-1}\right) \ell_{44}^{-} \ell_{33}^{+}+s \ell_{43}^{-} \ell_{34}^{+},
$$
$$
(r-s) \ell_{55}^{+} \ell_{44}^{-}+\ell_{45}^{+} \ell_{54}^{-}=(r-s) \ell_{55}^{-} \ell_{44}^{+}+r s \ell_{54}^{-} \ell_{45}^{+},
$$
$$
(r-s) \ell_{66}^{+} \ell_{55}^{-}+\ell_{56}^{+} \ell_{65}^{-}=(r-s) \ell_{66}^{-} \ell_{55}^{+}+r s \ell_{65}^{-} \ell_{56}^{+} ,
$$
$$
(r-s) \ell_{i i}^{+} \ell_{j j}^{-}+s \ell_{i-1, i}^{+} \ell_{j+1, j}^{-}=(r-s) \ell_{j+1, j+1}^{-} \ell_{i-1, i-1}^{+}+s \ell_{j+1, j}^{-} \ell_{i-1, i}^{+}, \quad\text {for } i+j=7
$$
%The following relations can help us verify that the  analog  Serre relations.

We now show the Serre relations. It follows from the defining relations that
 $$
 (r-s) \ell_{22}^{+} \ell_{13}^{+}+r s \ell_{12}^{+} \ell_{23}^{+}=r s \ell_{23}^{+} \ell_{12}^{+} \text {. }
 $$
 Multiplying $\ell_{12}^{+}$ from the left and the right, we have that
 $$
 \begin{aligned}
 	& r s\left(\ell_{12}^{+}\right)^2 \ell_{23}^{+}+(r-s) \ell_{12}^{+} \ell_{22}^{+} \ell_{13}^{+}=r s \ell_{12}^{+} \ell_{23}^{+} \ell_{12}^{+}, \\
 	& r s \ell_{12}^{+} \ell_{23}^{+} \ell_{12}^{+}+(r-s) \ell_{22}^{+} \ell_{13}^{+} \ell_{12}^{+}=r s \ell_{23}^{+}\left(\ell_{12}^{+}\right)^2 ,
 \end{aligned}
 $$
 Using the relations
 $
 \ell_{13}^{+} \ell_{12}^{+}=s^{-1} \ell_{12}^{+} \ell_{13}^{+}, \ell_{13}^{+} \ell_{22}^{+}=r^{-1} s^{-1} \ell_{22}^{+} \ell_{13}^{+} ,
 $
 we get that
 $$
 \left(\ell_{12}^{+}\right)^2 \ell_{23}^{+}-(r+s) \ell_{12}^{+} \ell_{23}^{+} \ell_{12}^{+}+r s \ell_{23}^{+}\left(\ell_{12}^{+}\right)^2=0.
 $$

 Similarly, first using defining relation we get that
 $$
 (r-s) \ell_{33}^{+} \ell_{24}^{+}+r s \ell_{23}^{+} \ell_{34}^{+}=-(r-s) r^{-\frac{1}{2}} s^{\frac{1}{2}} \ell_{35}^{+} \ell_{22}^{+}+s \ell_{34}^{+} \ell_{23}^{+},
 $$
 also
 $$
 \ell_{22}^{+} \ell_{23}^{+}=s \ell_{23}^{+} \ell_{22}^{+} \text {, }
 $$
 $$
 r \ell_{24}^{+} \ell_{23}^{+}=s^{-1}\left(r^2-s^2\right) \ell_{24}^{+} \ell_{23}^{+}+s \ell_{23}^{+} \ell_{24}^{+},
 $$
 $$
 \ell_{35}^{+} \ell_{23}^{+}=(r-s) \ell_{35}^{+} \ell_{23}^{+}+\ell_{23}^{+} \ell_{35}^{+} \text {, }
 $$
 It follows immediately that %Which can help us get another  analog  Serre relation
 $$
 \left(\ell_{23}^{+}\right)^3 \ell_{34}^{+}-\left(r+(r s)^{\frac{1}{2}}+s\right)\left(\ell_{23}^{+}\right)^2 \ell_{34}^{+} \ell_{23}^{+}+(r s)^{\frac{1}{2}}\left(r+(r s)^{\frac{1}{2}}+s\right) \ell_{23}^{+} \ell_{34}^{+}\left(\ell_{23}^{+}\right)^2-(r s)^{\frac{3}{2}} \ell_{34}^{+}\left(\ell_{23}^{+}\right)^3=0.
 $$
\end{proof}
\end{theorem}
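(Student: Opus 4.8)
The plan is to exhibit mutually inverse algebra homomorphisms $\phi\colon U(R)\to U_{r,s}(\mathrm C_3)$ and $\psi\colon U_{r,s}(\mathrm C_3)\to U(R)$. For $\phi$ I send the diagonal and first off-diagonal entries of $L^{\pm}$ to the elements displayed in the statement (and the remaining $\ell^{\pm}_{ij}$ with $|i-j|\ge 2$ to the corresponding entries of the Gauss-decomposed $L$-operators), and I must check that these images satisfy the FRT relations $RL_1^{\pm}L_2^{\pm}=L_2^{\pm}L_1^{\pm}R$ and $RL_1^{+}L_2^{-}=L_2^{-}L_1^{+}R$. Since $R$ is the explicit $6\times 6$ matrix computed in Section 2 and $L^{\pm}$ are triangular, evaluating each relation on a basis vector $v_i\otimes v_j$ ($1\le i,j\le 6$) collapses it to a short list of scalar identities among the $\phi(\ell^{\pm}_{k\ell})$, which I would run through systematically by the pair $(i,j)$. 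The bookkeeping is halved by the anti-automorphism $\tau\colon e_i\mapsto f_i,\ \omega_i\mapsto\omega_i',\ r\leftrightarrow s$, which under the flip $A\otimes B\mapsto B\otimes A$ matches the FRT anti-automorphism $\ell^{+}_{ij}\mapsto\ell^{-}_{ji}$; so it suffices to verify one of each conjugate pair, exactly as was done for $\Delta^{\mathrm{cop}}(e_i)R=R\Delta(e_i)$ in the $R$-matrix theorem above.

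Carrying out those evaluations simultaneously yields the relations holding in $U(R)$ itself, which split as usual into ``first type'' relations (those mixing $\ell^{+}$ with $\ell^{-}$, read off from $RL_1^{+}L_2^{-}(v_i\otimes v_j)=\cdots$) and ``second type'' relations (among the $\ell^{\pm}_{i,i+1}$, $\ell^{\pm}_{i+1,i}$ and $\ell^{\pm}_{ii}$). I then define $\psi$ by inverting the displayed formulas: $\omega_i^{\pm1},\omega_i'^{\pm1}$ go to the appropriate Laurent monomials in the diagonal entries $\ell^{\pm}_{jj}$, while $e_i\mapsto(\mathrm{scalar})\,(\ell^{+}_{ii})^{-1}\ell^{+}_{i,i+1}$ and $f_i\mapsto(\mathrm{scalar})\,\ell^{-}_{i+1,i}(\ell^{-}_{ii})^{-1}$, and I check that these images satisfy the defining relations of $U_{r,s}(\mathrm C_3)$: commutativity and invertibility of the Cartan part, the torus actions $\omega_i e_j\omega_i^{-1}=\cdots$ and their primed counterparts, $[e_i,f_j]=\delta_{ij}(\omega_i-\omega_i')/(r_i-s_i)$, and the $(r,s)$-Serre relations. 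The Cartan and torus relations are immediate from the first-type relations; the Serre relations fall out of the second-type relations by the argument already illustrated for $(\ell^{+}_{12})^2\ell^{+}_{23}$ and $(\ell^{+}_{23})^3\ell^{+}_{34}$ --- one starts from a defining relation of the shape $(r-s)\ell^{+}_{jj}\ell^{+}_{i,i+2}+rs\,\ell^{+}_{i,i+1}\ell^{+}_{i+1,i+2}=rs\,\ell^{+}_{i+1,i+2}\ell^{+}_{i,i+1}$ (with a more involved cubic analogue near node $3$), conjugates by $\ell^{+}_{i,i+1}$, and substitutes the commutation relations between $\ell^{+}_{i,i+2}$ and the lower entries.

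Finally I would check that $\phi$ and $\psi$ are mutually inverse. On generators $\phi\circ\psi=\mathrm{id}$ is transparent from the displayed formulas, and $\psi\circ\phi$ is the identity on the diagonal and first off-diagonal $\ell$'s; to conclude it is the identity on all of $U(R)$ I need that these ``simple'' entries generate $U(R)$, i.e. that every $\ell^{+}_{ij}$ with $j\ge i+2$ and every $\ell^{-}_{ji}$ is a polynomial in them. This is the standard consequence of the FRT relations evaluated on $v_i\otimes v_{i+2}$ and friends, which expresses $\ell^{+}_{i,i+2}$ through a commutator of $\ell^{+}_{i,i+1}$ and $\ell^{+}_{i+1,i+2}$, followed by induction on $j-i$; this is also where Remark~9 (the Gauss decomposition) enters to package the bijection cleanly.

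The heart of the difficulty, as flagged in the introduction, lies in the two-parameter ``extra'' contributions to the $R$-matrix --- the summands carrying the signs $\varepsilon_i\varepsilon_j$ and the powers $(r^{1/2}s^{-1/2})^{\bar\imath-\bar\jmath}$ --- which have no one-parameter analogue and must cancel pairwise inside each $RLL$ identity; these are the $\rho_1,\dots,\rho_5$-type exponents that already surfaced in the verification that the $R$-matrix intertwines $\Delta$ and $\Delta^{\mathrm{cop}}$. Keeping this exponent bookkeeping exactly right, especially in the block around node $3$ where the structure constants involve $(rs)^{\pm1}$ and the Serre relation becomes cubic, is the only genuine obstacle; once it is controlled, the remaining identities are formal.
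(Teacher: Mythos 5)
Your proposal is correct and follows essentially the same route as the paper: evaluate the $RLL$ relations on the basis vectors $v_i\otimes v_j$, use the anti-automorphism $\ell^+_{ij}\mapsto\ell^-_{ji}$ (corresponding to $e_i\mapsto f_i$, $\omega_i\mapsto\omega_i'$, $r\leftrightarrow s$) to halve the verifications, and obtain the $(r,s)$-Serre relations by conjugating the mixed quadratic relations by $\ell^+_{i,i+1}$. Your explicit bijectivity step, via the fact that the diagonal and first off-diagonal entries generate $U(R)$, is exactly what the paper packages into the recurrence relations of Remark~9.
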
		

 \begin{theorem}
 	There is an isomorohism between $U(R)$ and $U_{r, s}\left(\mathrm{C}_n\right)$ given as follows. For $1\leq i\leq n$
\begin{align*}
 	\ell_{i i}^{+}& \mapsto \omega_i^{-1} \omega_{i+1}^{-1} \cdots \omega_{n-1}^{-1}\left(\omega_n^{\frac{1}{2}}\right)^{-1}, \\
 	\ell_{i^{'} i^{'}}^{+}& \mapsto \omega_i \omega_{i+1} \cdots \omega_{n-1} \omega_n^{\frac{1}{2}},\\
 	\ell_{i i}^{-}& \mapsto\left(\omega_i^{\prime}\right)^{-1}\left(\omega_{i+1}^{\prime}\right)^{-1} \cdots\left(\omega_{n-1}^{\prime}\right)^{-1}\left(\omega_n^{\prime}\right)^{-\frac{1}{2}},\\
 	\ell_{i^{'} i^{'}}^{-}& \mapsto\omega_i^{\prime} \omega_{i+1}^{\prime} \cdots \omega_{n-1}^{\prime}\left(\omega_n^{\prime}\right)^{\frac{1}{2}},
 \end{align*}
 \begin{align*}
 	\ell_{k, k+1}^{+}&\mapsto (r-s) s^{-1} \ell_{k k}^{+} e_k, \quad \ell_{k+1, k}^{-}\mapsto -(r-s) r^{-1} f_k \ell_{k k}^{-},\\
\ell_{2 n-k, 2 n-k+1}^{+}&\mapsto -(r-s) \ell_{2 n-k, 2 n-k}^{+} e_k,\quad \ell_{2 n-k+1,2 n-k}^{-}\mapsto (r-s) f_k \ell_{2 n-k, 2 n-k}^{-} \text {, }\\
 	\ell_{n, n+1}^{+}& \mapsto \left(r^2-s^2\right) s^{-1} \ell_{n n}^{+} e_n, \quad \ell_{n+1, n}^{-}\mapsto -\left(r^2-s^2\right)s^{-1} f_n l_{n n}^{-},
 \end{align*}
 	where $i^{\prime}=2 n+1-i.$
 	\end{theorem}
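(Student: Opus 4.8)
The plan is to prove the statement by extending the computation carried out for $U_{r,s}(\mathrm{C}_3)$ in Theorem~\ref{T:C3} to general $n$, and then to exhibit an explicit two-sided inverse so that the constructed correspondence is genuinely an isomorphism. Thus the proof divides into: (i) showing the assignment extends to an algebra homomorphism $\phi\colon U(R)\to U_{r,s}(\mathrm{C}_n)$; (ii) surjectivity of $\phi$; (iii) injectivity, via a homomorphism $\psi$ in the opposite direction built from the Drinfeld--Jimbo generators.

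For (i), note that only the diagonal entries $\ell^\pm_{ii},\ell^\pm_{i^{\prime}i^{\prime}}$ and the adjacent entries $\ell^+_{k,k+1},\ell^-_{k+1,k},\ell^+_{2n-k,2n-k+1},\ell^-_{2n-k+1,2n-k},\ell^+_{n,n+1},\ell^-_{n+1,n}$ are prescribed; the remaining $\ell^\pm_{ij}$ with $|i-j|\ge2$ must be sent to the corresponding iterated $(r,s)$-brackets of the adjacent generators, the exact form of these brackets being dictated by the off-diagonal part of the Gauss decomposition $L^\pm=\hat N^\pm\hat D^\pm N^\pm$, and part of the verification is the consistency of these assignments. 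One then checks $RL^\pm_1L^\pm_2=L^\pm_2L^\pm_1R$ and $RL^+_1L^-_2=L^-_2L^+_1R$ component by component, by applying them to the vectors $v_i\otimes v_j$ as in the proof of Theorem~\ref{T:C3}. As there, the components split into two families. The \emph{first-type} relations are commutation relations among the Gaussian generators; upon substitution they become the (finite-type specializations of the) relations $(\hat{C}2)$, $(\hat{C}3)$, $(\hat{C}5)$, and follow from the explicit form of the basic $R$-matrix obtained above. The \emph{second-type} relations, after cancelling the triangular factors, collapse to the Chevalley relation $[e_i,f_j]=\delta_{ij}(\omega_i-\omega_i')/(r_i-s_i)$ and to the $(r,s)$-Serre relations $(\hat{C}6)$, $(\hat{C}7)$; the Serre relations are produced exactly as for $n=3$, by multiplying a quadratic consequence of the $RLL$-relations by the adjacent generator $\ell^+_{k,k+1}$ on both sides and feeding in the lower-order commutation relations, the two cubic Serre relations at node $n-1$ coming from the longer computation involving $\ell^+_{n-1,n}$, $\ell^+_{n,n+1}$ and $\ell^+_{n-1,n+1}$, where the powers $(rs)^{1/2}$ appear.

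For (ii), from the images one recovers $\omega^{\pm1}_i,\omega_i^{\prime\pm1}$ as ratios of consecutive diagonal entries and $e_k,f_k$ from the adjacent entries, so the images generate $U_{r,s}(\mathrm{C}_n)$. For (iii), I would define $\psi\colon U_{r,s}(\mathrm{C}_n)\to U(R)$ on the Drinfeld--Jimbo generators by the formulas inverting (ii) and check that all defining relations of $U_{r,s}(\mathrm{C}_n)$ hold in $U(R)$: the $\omega$-conjugation relations by reading the scalar entries of the $R$-matrix off the first- and second-type relations, the Chevalley and Serre relations being precisely the identities already verified inside $U(R)$ in step (i) (read in reverse), while the relations involving inverses of diagonal entries use the Proposition above on $L^{\prime\pm}$ and $L^{\prime\prime\pm}$. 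Since $\phi$ and $\psi$ are mutually inverse on generators, both are isomorphisms.

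The main obstacle, as signalled in the introduction for the affine case, is controlling the extra terms created by the second parameter: the present $R$-matrix has strictly more nonzero entries than in one-parameter type $\mathrm{C}_n$, so every $RLL$-component acquires correction terms, and one must show that the off-diagonal entries $\ell^\pm_{ij}$ with $|i-j|\ge2$ — especially those straddling the middle indices $n,n+1$, where the symplectic form contributes the signs $\varepsilon_i$ and the shifted exponents $\bar\imath-\bar\jmath$ — are consistently expressible through the Gaussian generators and impose no new relations. Keeping track of the $(rs)^{1/2}$-powers in the two cubic $(r,s)$-Serre relations for arbitrary $n$ is the most delicate computation; here one invokes the relations of the embedded type $A_{2n-1}$ $R$-matrix to reduce the general-$n$ case to the pattern already checked for $n=3$.
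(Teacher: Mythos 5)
Your plan is correct in outline but takes a genuinely different route from the paper. The paper proves this theorem by induction: the base case is the explicit $n=3$ computation of Theorem \ref{T:C3}, and the induction runs over the generator indices from the middle (index $n$) outward to index $1$, with the composite entries $\ell^{\pm}_{ij}$ recovered from the adjacent ones by the recurrence relations recorded in the remark that follows (this is the finite-type shadow of the rank-reduction homomorphism $\psi_m$ used later in the affine setting); the general-$n$ case is thereby reduced to boundary checks rather than redone from scratch. You instead propose a direct verification of every $RLL$ component for general $n$ together with an explicit two-sided inverse $\psi$. That is computationally heavier — each identity checked for $n=3$, including those coming from the $E_{i'j'}\otimes E_{ij}$ part of $R$, must be reproved with general indices — but it buys an explicit proof of bijectivity rather than one inherited from the base case, and your identification of the two relation families (first-type giving the $\omega$-conjugation relations, second-type giving the Chevalley and $(r,s)$-Serre relations) matches what the paper does for $n=3$. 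One step you should make explicit in part (iii): each $e_k$ with $k<n$ occurs in \emph{two} entries of $L^{+}$, at positions $(k,k+1)$ and $(2n-k,2n-k+1)$ (likewise $f_k$ in $L^{-}$), so $\psi\circ\phi=\mathrm{id}$ on $\ell^{+}_{2n-k,2n-k+1}$ forces an identity expressing $\ell^{+}_{2n-k,2n-k+1}$ through $\ell^{+}_{k,k+1}$ and diagonal entries to hold already inside $U(R)$; this is a consequence of the $RLL$ components carrying the $Q$-term of the $R$-matrix, but it is not among the relations you list and must be derived before the two maps can be declared mutually inverse on all generators. With that addition the argument closes.
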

 \begin{proof} The isomorphism is proved by induction and the first step was Theorem \ref{T:C3}, and the induction starts from index $n$ and ends with index $1$. In the following remark we show how the induction goes for the first case.
 	\end{proof}
  \begin{remark} One can write down explicit expression for composite root vectors $\ell_{i j}^{+}$ in terms of simple root vectors.
  For example, when n=3, we can get  following ecurrence relations for positive roots.
\begin{align*}
\ell_{46}^{+}&=\frac{1}{r-s}\left(\ell_{55}^{+}\right)^{-1}\left(\ell_{56}^{+} \ell_{45}^{+}-\ell_{45}^{+} \ell_{56}^{+}\right), \quad \ell_{35}^{+}=\frac{1}{r^2-s^2}\left(\ell_{44}^{+}\right)^{-1}\left(s \ell_{45}^{+} \ell_{34}^{+}-s^2 \ell_{34}^{+} \ell_{45}^{+}\right),\\
\ell_{24}^{+}&=\frac{1}{r-s}\left(\ell_{33}^{+}\right)^{-1}\left(-(r-s) r^{-\frac{1}{2}} s^{\frac{1}{2}} \ell_{35}^{+} \ell_{22}^{+}+s \ell_{34}^{+} \ell_{23}^{+}-r s \ell_{23}^{+} \ell_{34}^{+}\right), \\
\ell_{13}^{+}&=\frac{r s}{r-s}\left(\ell_{22}^{+}\right)^{-1}\left(\ell_{23}^{+} \ell_{12}^{+}-\ell_{12}^{+} \ell_{23}^{+}\right), \quad \ell_{36}^{+}=\frac{1}{r-s}\left(\ell_{55}^{+}\right)^{-1}\left(\ell_{56}^{+} \ell_{35}^{+}-r s \ell_{35}^{+} \ell_{56}^{+}\right),\\
\ell_{25}^{+}&=\frac{r s}{r-s}\left(\ell_{33}^{+}\right)^{-1}\left(\ell_{35}^{+} \ell_{23}^{+}-\ell_{23}^{+} \ell_{35}^{+}\right), \quad \ell_{14}^{+}=\frac{1}{r-s}\left(\ell_{22}^{+}\right)^{-1}\left(\ell_{24}^{+} \ell_{12}^{+}-r s \ell_{12}^{+} \ell_{24}^{+}\right),\\
\ell_{26}^{+}&=\frac{1}{r-s}\left(\ell_{44}^{+}\right)^{-1}\left(\ell_{46}^{+} \ell_{24}^{+}-\ell_{24}^{+} \ell_{46}^{+}\right), \quad \ell_{15}^{+}=\frac{r s}{r-s}\left(\ell_{33}^{+}\right)^{-1}\left(\ell_{35}^{+} \ell_{13}^{+}-\ell_{13}^{+} \ell_{35}^{+}\right),\\
\ell_{16}^{+}&=\frac{1}{r-s}\left(\ell_{44}^{+}\right)^{-1}\left(\ell_{46}^{+} \ell_{14}^{+}-\ell_{14}^{+} \ell_{46}^{+}\right).
\end{align*}
Using these and the formula for the negative root vectors, we can obtain Gauss decomposition for $\ell_{i j}^{+}$.

  %	From the ecurrence relations in remark 9, We can observe that our induction starts with n and ends with 1, and this can also be seen from the root system.
  	\end{remark}

	\section{$RLL$-REALIZATION OF $U_{r, s}(\widehat{\mathfrak{sp}}_{2n})$}
	\subsection{The algebras $U(\bar{R})$}
The matrix $R$ has a spectral parameter dependent $R$-matrix $\bar{R}(z)$ obtained by the Yang-Baxterization method (cf. \cite{GWX}).
Since $R$ has three different eigenvalues, the spectral dependent $R$-matrix $\bar{R}(u)$ is given by
$$
\bar{R}(u)=r^{\frac{1}{2}} s^{\frac{1}{2}} \frac{u-1}{u r-s} R+\frac{r-s}{u r-s} P-\frac{(r-s)(u-1) r^{-n-1} s^{n+1}}{(u r-s)\left(u-r^{-n-1} s^{n+1}\right)} Q.
$$
where $P=\sum_{ij}E_{ij}\otimes E_{ji}$ is the permutation operator and $Q=\sum_{ij}(r^{\frac{1}{2}}s^{-\frac{1}{2}})^{\bar{i}-\bar{j}}\varepsilon_{i}\varepsilon_{j}E_{i'j'}\otimes E_{ij}$.
This method depends on explicit information of the eigenvalues of $R$. There is a
representation theoretic method to compute the quantum affine $R$ matrix given by Jimbo \cite{J}, which we now generalize to the
two-parameter case.

Recall that the universal $R$-matrix $\mathcal{R}\in \mathcal{A} \hat{\otimes} \mathcal{A}$ of a quasitriangular Hopf algebra $\mathcal{A}$ satisfies the following equation
%\in \mathcal{A} \otimes \mathcal{A}$ satisfies the following relation
\begin{equation}\label{e:univR}
\Delta^{o p}(x) \mathcal{R}=\mathcal{R} \Delta(x),
\end{equation}
for any $x \in \mathcal{A}$. Here $\mathcal{A} \hat{\otimes} \mathcal{A}$ refers to some completed tensor product. The relation \eqref{e:univR} is
equivalent to the following equations: on any representation $\pi$ of $\mathcal{A}$, let $R\left(\frac{u}{v}\right)=\left(\pi_u \otimes \pi_v\right) \mathcal{R}$ then
\begin{align}\label{e:univR1}
&\left[R\left(\frac{u}{v}\right), \omega_i \otimes \omega_i\right]=\left[R\left(\frac{u}{v}\right), \omega_i^{\prime} \otimes \omega_i^{\prime}\right]=0,\\
\label{e:univR2}
& R\left(\frac{u}{v}\right)\left(e_i \otimes 1+\omega_i \otimes e_i\right)=\left(1 \otimes e_i+e_i \otimes \omega_i\right) R\left(\frac{u}{v}\right),\\ \label{e:univR3}
&R\left(\frac{u}{v}\right)\left(f_i \otimes \omega_i^{\prime}+1 \otimes f_i\right)=\left(\omega_i^{\prime} \otimes f_i+f_i \otimes 1\right) R\left(\frac{u}{v}\right),
\end{align}
for $i=0,1, \cdots ,n$. It is known \cite{J} that the dimension of the solution space of the linear system \eqref{e:univR1}-\eqref{e:univR3} is at most $1$.
Therefore the $R$-matrix $\bar{R}(z)$ is unique up to scale function.
	
We now use this method to determine the spectral dependent $R$-matrix. The fundamental representation $T_1$ of the finite-dimensional $U_{r, s}(\mathfrak{s p}_{2 n})$ can be lifited to that of $U_{r, s}(\widehat{\mathfrak{s p}}_{2 n})$ by definiting
\begin{align*}
T_1\left(\omega_0\right)&=r s^{-1} E_{1,1}+r s E_{2,2}+r^{-1} s^{-1} E_{2 n-1,2 n-1}+r^{-1} s E_{2 n, 2 n}+\sum_{j \neq\{1,2,2 n-1,2 n\}} E_{j, j},\\
T_1\left(w_0^{\prime}\right)&=r^{-1} s E_{1,1}+r s E_{2,2}+r^{-1} s^{-1} E_{2 n-1,2 n-1}+r s^{-1} E_{2 n, 2 n}+\sum_{j \neq\{1,2,2 n-1,2 n\}} E_{j, j},\\
T_1\left(e_0\right)&=r^{-\frac{1}{2}} s^{-\frac{1}{2}} u E_{1,2 n}, \quad
T_1\left(f_0\right)=r^{-\frac{1}{2}} s^{-\frac{1}{2}} u^{-1} E_{2 n, 1}.
\end{align*}
 After a detailed and straightforward computation with \eqref{e:univR1}-\eqref{e:univR3} we find the quantum affine $R$ matrix.
 \begin{proposition} The quantum affine $R$-matrix for $U_{r, s}(\widehat{\mathfrak{sp}}_{2n})$ is given by
	\begin{flalign*}
 			& \bar{R}(u)=\sum_{i=1}^{2 n} E_{i i} \otimes E_{i i}+r s \frac{u-1}{u r-s}\left\{\sum_{\substack{1 \leq i \leq n-1 \\
 					i+1 \leq j \leq n}} E_{i i} \otimes E_{j j}\right. \\
 			& +\sum_{\substack{1 \leq i \leq n-1 \\
 					i^{'}+1 \leq j \leq 2 n}} E_{i i} \otimes E_{j j}+\sum_{j=n+2}^{2 n} E_{n n} \otimes E_{j j}+\sum_{\substack{n+1 \leq j \leq 2 n-1 \\
 					j+1 \leq i \leq 2 n}} E_{i i} \otimes E_{j j} \\
 			& \left.+\sum_{\substack{1 \leq i \leq n-1 \\
 					n+1 \leq j \leq 2 n-i}} E_{j j} \otimes E_{i i}\right\}+\frac{u-1}{u r-s}\left\{\sum_{\substack{1 \leq i \leq n-1 \\
 					i+1 \leq j \leq n}} E_{j j} \otimes E_{i i}+\sum_{\substack{1 \leq i \leq n-1 \\
 					i^{'}+1 \leq j \leq 2 n}} E_{j j} \otimes E_{i i}\right.
	\end{flalign*}
	\begin{flalign*}
	& \left.+\sum_{j=n+2}^{2 n} E_{j j} \otimes E_{n n}+\sum_{\substack{n+1 \leq j \leq 2 n-1 \\
			j+1 \leq i \leq 2 n}} E_{j j} \otimes E_{i i}+\sum_{\substack{1 \leq i \leq n-1 \\
			n+1 \leq j \leq 2 n-i}} E_{i i} \otimes E_{j j}\right\} \\
	& +\frac{r-s}{u r-s} \sum_{i>j, i \neq j^{'}} E_{i j} \otimes E_{j i}+\frac{(r-s) u}{u r-s} \sum_{i<j, i \neq j^{\prime}} E_{i j} \otimes E_{j i} \\
	& +\frac{1}{\left(u-r^{-1} s\right)\left(u-\left(r s^{-1}\right)^{-n-1}\right)} \sum_{i . j=1}^{2 n} a_{i j}(u) E_{i^{\prime} j^{\prime}} \otimes E_{i j}, \\
	&
	\end{flalign*}
where
$$
a_{i j}(u)=\left\{\begin{array}{l}
	(u-1)\left(r^{-1} s u-r^{-n-1} s^{n+1}\right), \quad for  $i=j$, \\
	\left(r^{-1} s-1\right) u\left\{\left(r^{\frac{1}{2}} s^{-\frac{1}{2}}\right)^{\bar{\imath}-\bar{\jmath}} \varepsilon_i \varepsilon_j(u-1)-\delta_{i j^{\prime}}\left(u-r^{-n-1} s^{n+1}\right)\right\}  \mbox{,\quad for $i>j$,}\\
	\left(r^{-1} s-1\right)\left\{\left(r^{\frac{1}{2}} s^{-\frac{1}{2}}\right)^{\bar{\imath}-\bar{\jmath}} \varepsilon_i \varepsilon_j r^{-n-1} s^{n+1}(u-1)-\delta_{i j^{\prime}}\left(u-r^{-n-1} s^{n+1}\right)\right\}  \mbox{,\quad for $i<j$}.
\end{array}\right.
$$	
	\end{proposition}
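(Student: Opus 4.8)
The plan is to verify directly that the matrix $\bar R(u)$ displayed in the statement solves the linear system \eqref{e:univR1}--\eqref{e:univR3} for the lifted vector representation $T_1$ and is nonzero; by the cited result of Jimbo the solution space of this system has dimension at most one, so one nonzero solution determines $\bar R(u)$ up to an overall scalar function, which is then fixed by the normalization that the coefficient of $E_{ii}\otimes E_{ii}$ equals $1$. The equations \eqref{e:univR1} are the easy part: since $T_1(\omega_i)$ and $T_1(\omega_i')$ are diagonal, $\omega_i\otimes\omega_i$ and $\omega_i'\otimes\omega_i'$ act by scalars on each weight space $V_a\otimes V_b$ of $V\otimes V$, and each summand of $\bar R(u)$ — the diagonal terms $E_{aa}\otimes E_{bb}$, the terms $E_{ab}\otimes E_{ba}$ with $a\neq b$, and the terms $E_{a'b'}\otimes E_{ab}$ — is $\mathfrak h$-homogeneous of total weight zero, hence commutes with both; this is read off from the weights $\pm(\epsilon_a-\epsilon_b)$ and $\pm(\epsilon_a+\epsilon_b)$. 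This also shows $\bar R(u)$ is supported only between matching weight spaces, which cuts down the remaining work.

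For the finite nodes $1\le i\le n$, the operators $T_1(e_i), T_1(f_i), T_1(\omega_i), T_1(\omega_i')$ carry no spectral parameter. Writing $\bar R(u)=a(u)R+b(u)P+c(u)Q$ with scalar functions $a,b,c$ read off from the displayed formula (equivalently, the Yang--Baxterized form), one uses that $P$ intertwines $\Delta$ with $\Delta^{op}$ on the spectral-parameter-free generators, that $R$ does so by the finite-type $R$-matrix theorem already proved, and that $Q$ is, up to a scalar, the rank-one projection onto the one-dimensional trivial $U_{r,s}(\mathfrak{sp}_{2n})$-submodule of $V\otimes V$ (this line, spanned by the antisymmetric symplectic vector, is stable for both $\Delta$ and $\Delta^{op}$), so that $\Delta(e_i)Q=Q\Delta(e_i)=0=\Delta^{op}(e_i)Q$. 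Hence \eqref{e:univR2}--\eqref{e:univR3} hold at these nodes for every $u$; alternatively one matches the coefficient of each $E_{ab}\otimes E_{cd}$ exactly as in the proof of the finite $R$-matrix theorem, now carrying the rational prefactors along.

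The genuinely new computation is \eqref{e:univR2}--\eqref{e:univR3} at the affine node $i=0$. Substituting $T_1(e_0)=r^{-\frac12}s^{-\frac12}u\,E_{1,2n}$ and the diagonal $T_1(\omega_0)$, equation \eqref{e:univR2} at $i=0$ becomes a matrix identity whose two sides are supported on the sparse pattern cut out by the single matrix unit $E_{1,2n}$; one then compares the coefficient of each $E_{ab}\otimes E_{cd}$. The equations touching rows and columns $1$ and $2n$ in one tensor slot are precisely those constraining the anti-diagonal block of $\bar R(u)$, namely $\frac{1}{(u-r^{-1}s)(u-(rs^{-1})^{-n-1})}\sum a_{ij}(u)\,E_{i'j'}\otimes E_{ij}$, and one verifies that the prescribed $a_{ij}(u)$ — with its three cases $i=j$, $i>j$, $i<j$ and the Kronecker correction $\delta_{ij'}$ — makes every such entrywise equation hold after clearing the common denominator, each reducing to a polynomial identity in $u$. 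Equation \eqref{e:univR3} for $f_0$ then follows from applying the anti-automorphism $\tau$ together with the flip on $V\otimes V$, exactly as the $f_i$ relations were deduced from the $e_i$ relations in the finite case.

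The main obstacle is bookkeeping in this last step: because $U_{r,s}(\widehat{\mathfrak{sp}}_{2n})$ has the doubled family of group-like elements, the intertwining equation at the affine node produces more terms than in the one-parameter case, and the contributions of $T_1(\omega_0)$ at the ``wrapped-around'' indices $1$ and $2n$ must be matched against the $u$-dependent coefficients $a_{ij}(u)$ together with the sign and power factors $\varepsilon_i\varepsilon_j(r^{1/2}s^{-1/2})^{\bar\imath-\bar\jmath}$ appearing in the anti-diagonal part. A convenient independent check is that the displayed formula must agree, up to the overall normalization, with the Yang--Baxterized expression $\bar R(u)=rs\frac{u-1}{ur-s}R+\frac{r-s}{ur-s}P-\frac{(r-s)(u-1)r^{-n-1}s^{n+1}}{(ur-s)(u-r^{-n-1}s^{n+1})}Q$ recorded above; verifying this algebraic identity between the two forms is elementary, and by the uniqueness statement either route then suffices.
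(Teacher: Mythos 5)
Your proposal is correct and follows essentially the same route as the paper: the paper derives $\bar R(u)$ by exactly the two methods you describe — solving Jimbo's linear system \eqref{e:univR1}--\eqref{e:univR3} for the lifted representation $T_1$ (with uniqueness of the solution up to a scalar function) and cross-checking against the Yang--Baxterized expression $r^{\frac12}s^{\frac12}\frac{u-1}{ur-s}R+\frac{r-s}{ur-s}P-\frac{(r-s)(u-1)r^{-n-1}s^{n+1}}{(ur-s)(u-r^{-n-1}s^{n+1})}Q$ — and, like you, reduces the genuinely new work to the affine node $i=0$ and the anti-diagonal block $\sum a_{ij}(u)E_{i'j'}\otimes E_{ij}$. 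Your decomposition $\bar R(u)=a(u)R+b(u)P+c(u)Q$ and the observation that $R$, $P$, $Q$ each intertwine $\Delta$ with $\Delta^{op}$ on the finite subalgebra is a clean way to organize the verification that the paper leaves implicit.
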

It is easy to check that $\bar{R}(u)$ satisfies the quantum Yang-Baxter equation
$$
\bar{R}_{12}(z) \bar{R}_{13}(z w) \bar{R}_{23}(w)=\bar{R}_{23}(w) \bar{R}_{13}(z w) \bar{R}_{12}(z),
$$
and the unitary condition
$$
\bar{R}_{21}(z) \bar{R}_{12}\left(z^{-1}\right)=\bar{R}_{12}\left(z^{-1}\right) \bar{R}_{21}(z)=1.
$$

We define the algebra $U(\bar{R})$ as the unital associative algebra generated by two invertible central elements $r^{c / 2}$,$s^{c / 2}$ and elements $\ell_{i j}^{ \pm}[\mp m]$ with $1 \leqslant i, j \leqslant 2 n$ and $m \in \mathbb{Z}_{+}$ such that
\begin{align*}
&\ell_{i j}^{+}[0]=\ell_{j i}^{-}[0]=0  ,\text { for }  i>j, \\
&\ell_{n, n}^{+}[0] \ell_{n+1, n+1}^{+}[0]=1, \quad \text { and } \quad \ell_{i i}^{+}[0] \ell_{i i}^{-}[0]=\ell_{i i}^{-}[0] \ell_{i i}^{+}[0]=1.
\end{align*}
and
$$
\bar{R}\left(\frac{z}{w}\right) L_1^{ \pm}(z) L_2^{ \pm}(w)=L_2^{ \pm}(w) L_1^{ \pm}(z) \bar{R}\left(\frac{z}{w}\right),
$$
$$
\bar{R}\left(\frac{z_{+}}{w_{-}}\right) L_1^{+}(z) L_2^{-}(w)=L_2^{-}(w) L_1^{+}(z) \bar{R}\left(\frac{z_{-}}{w_{+}}\right).
$$
where $z_{+}=z r^{\frac{c}{2}}$ and $z_{-}=z s^{\frac{c}{2}}.$

	\subsection{Gauss Decomposition}
We first recall the notion of quasideterminants of Gelfand and Retakh \cite{GR}.
		\begin{defi}
			Let $X=\left(x_{i j}\right)_{i, j=1}^n$ be a matrix over a ring with identity. Denote by $X^{i j}$ the submatrix obtained from $X$ by deleting the ith row and jth
			column.  Suppove the matrix $X^{i j}$ is invertible over the ring. The $(i, j)$-th quasi-determiant $|X|_{i j}$ of $X$ is defined by
			$$
			|X|_{i j}=\left|\begin{array}{rrrrr}
				x_{11} & \cdots & x_{1 j} & \cdots & x_{1 n} \\
				& \cdots & & \cdots & \\
				x_{i 1} & \cdots & \boxed{x_{i j}} & \cdots & x_{i n} \\
				& \cdots & & \cdots & \\
				x_{n 1} & \cdots & x_{n j} & \cdots & x_{n n}
			\end{array}\right|=x_{i j}-r_i^j\left(X^{i j}\right)^{-1} c_j^i,
			$$
			where $r_i^j$ is $i$th row of $X^{ij}$ and $c_j^t$ is $j$th column of $X^{ij}$.
			\end{defi}
Then we have the following Gauss decomposition for the generic matrix $L^{\pm}(z)$ as in \cite{JLM2}.
	 	\begin{proposition}\cite{JLM2}
	 		$L^{ \pm}(z)$ have the following unique decomposition
	 		$$
	 		L^{ \pm}(z)=F^{ \pm}(z) H^{ \pm}(z) E^{ \pm}(z),
	 		$$
	 where the matrices ${H}^{ \pm}(z)=diag(\mathfrak{h}_1^{ \pm}(z), \mathfrak{h}_2^{ \pm}(z), \ldots, \mathfrak{h}_n^{ \pm}(z))$ and		
\begin{align*}
	 		F^{ \pm}(z)=\left(\begin{array}{cccc}
	 			1 & & & 0 \\
	 			\mathfrak{f}_{21}^{ \pm}(z) & \ddots & & \\
	 			\vdots & \ddots & \ddots & \\
	 			\mathfrak{f}_{n 1}^{ \pm}(z) & \cdots & 	\mathfrak{f}_{n, n-1}^{ \pm}(z) & 1
	 		\end{array}\right), \qquad
	 		E^{ \pm}(z)=\left(\begin{array}{cccc}
	 			1 & 	\mathfrak{e}_{12}^{ \pm}(z) & \cdots & 	\mathfrak{e}_{1 n}^{ \pm}(z) \\
	 			& \ddots & \ddots & \vdots \\
	 			& & \ddots & 	\mathfrak{e}_{n-1, n}^{ \pm}(z) \\
	 			0 & & & 1
	 		\end{array}\right),
\end{align*}
	 		where the entries are given by the quasidetermiants:
	 		$$
	 		\mathfrak{h}_m^{ \pm}(z)=\left|\begin{array}{cccc}
	 			\ell_{11}^{ \pm}(z) & \cdots & \ell_{1, m-1}^{ \pm}(z) & \ell_{1 m}^{ \pm}(z) \\
	 			\vdots & & \vdots & \vdots \\
	 			\ell_{m 1}^{ \pm}(z) & \cdots & \ell_{m, m-1}^{ \pm}(z) & \boxed{\ell_{m m}^{ \pm}(z)}
	 		\end{array}\right|
	 		$$
	 		for $1 \leq s \leq 2 n, \mathfrak{h}_m^{ \pm}(z)=\sum_{s \in \mathbb{Z}_{+}} \mathfrak{h}_m^{ \pm}(\mp s) z^{ \pm s}$.
	 		$$
	 		\mathfrak{e}_{i j}^{ \pm}(z)=\mathfrak{h}_i^{ \pm}(z)^{-1}\left|\begin{array}{cccc}
	 			\ell_{11}^{ \pm}(z) & \cdots & \ell_{1, i-1}^{ \pm}(z) & \ell_{1 j}^{ \pm}(z) \\
	 			\vdots & & \vdots & \vdots \\
	 			\ell_{i 1}^{ \pm}(z) & \cdots & \ell_{i, i-1}^{ \pm}(z) & \boxed{\ell_{i j}^{ \pm}(z)}
	 		\end{array}\right|
	 		$$
	 		for $1 \leq i<j \leq 2 n, 	\mathfrak{e}_{i j}^{ \pm}(z)=\sum_{m \in \mathbb{Z}_{+}} 	\mathfrak{e}_{i j}^{ \pm}(\mp m) z^{ \pm m}$.
	 		$$
	 		\mathfrak{f}_{j i}^{ \pm}(z)=\left|\begin{array}{cccc}
	 			\ell_{11}^{ \pm}(z) & \cdots & \ell_{1, i-1}^{ \pm}(z) & \ell_{1 i}^{ \pm}(z) \\
	 			\vdots & & \vdots & \vdots \\
	 			\ell_{j 1}^{ \pm}(z) & \cdots & \ell_{j, i-1}^{ \pm}(z) & \boxed{\ell_{j i}^{ \pm}(z)}
	 		\end{array}\right| 	\mathfrak{h}_i^{ \pm}(z)^{-1}
	 		$$
	 		for $1 \leq i<j \leq 2 n, 	\mathfrak{f}_{j i}^{ \pm}(z)=\sum_{m \in \mathbb{Z}_{+}} 	\mathfrak{f}_{j i}^{ \pm}(\mp m) z^{ \pm m}$.
 		\end{proposition}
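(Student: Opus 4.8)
The plan is to deduce this from the general theory of Gauss decomposition of a square matrix over a unital (possibly noncommutative) ring, in the form due to Gelfand and Retakh, and then to verify the single hypothesis that makes that theory applicable here. \textbf{Step 1: the abstract input.} Recall the general fact that if $X=(x_{ij})_{i,j=1}^{N}$ is a matrix over a unital ring such that every leading principal submatrix $X_{[m]}=(x_{ij})_{1\le i,j\le m}$ is invertible for $1\le m\le N$, then $X$ admits a unique factorization $X=FHE$ with $F$ lower unitriangular, $H=\mathrm{diag}(\mathfrak h_1,\dots,\mathfrak h_N)$ diagonal, and $E$ upper unitriangular; moreover $\mathfrak h_m=|X_{[m]}|_{mm}$, and the off-diagonal entries of $E$ and $F$ are exactly the quasideterminants displayed in the statement. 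I would prove this by induction on $N$: deleting the last row and column, perform one block-elimination step using $X_{[N-1]}^{-1}$, and invoke the Schur-complement identity for quasideterminants, which identifies the resulting pivot with $|X_{[N]}|_{NN}$; uniqueness is automatic, since a product of a lower unitriangular, a diagonal and an upper unitriangular matrix equal to the identity forces each factor to be the identity.

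\textbf{Step 2: applying it to $L^{\pm}(z)$.} I would take the underlying ring to be $U(\bar R)$ with a formal variable adjoined, regarding $L^{+}(z)$ as a matrix over the power series ring $U(\bar R)[[z]]$ and $L^{-}(z)$ over $U(\bar R)[[z^{-1}]]$, consistently with the mode expansions $\ell_{ij}^{\pm}(z)=\sum_{m\ge 0}\ell_{ij}^{\pm}[\mp m]z^{\pm m}$. The key observation is that the constant term of $L^{+}(z)$ is $L^{+}[0]$, which by the defining relations of $U(\bar R)$ is upper triangular, since $\ell_{ij}^{+}[0]=0$ for $i>j$, with invertible diagonal entries; consequently each leading principal submatrix $\bigl(\ell^{+}_{ij}(z)\bigr)_{i,j\le m}$ has constant term an invertible triangular matrix, hence is itself invertible over $U(\bar R)[[z]]$ by the usual geometric-series argument. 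The same holds for $L^{-}(z)$ with ``upper'' replaced by ``lower''. Thus the hypothesis of Step 1 is met, and the factorization $L^{\pm}(z)=F^{\pm}(z)H^{\pm}(z)E^{\pm}(z)$ exists, is unique, and carries the stated quasideterminant formulas for $\mathfrak h_m^{\pm}$, $\mathfrak e_{ij}^{\pm}$ and $\mathfrak f_{ji}^{\pm}$.

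\textbf{Step 3: mode expansions.} It remains to record that $\mathfrak h_m^{\pm}(z)$, $\mathfrak e_{ij}^{\pm}(z)$ and $\mathfrak f_{ji}^{\pm}(z)$ lie in $U(\bar R)[[z^{\pm 1}]]$ with the indicated one-sided supports in $z$. This is immediate: each is a finite noncommutative polynomial in the entries $\ell_{kl}^{\pm}(z)$ and in the inverses $\mathfrak h_i^{\pm}(z)^{-1}$, all of which are series of the required type, and the constant term of $\mathfrak h_m^{\pm}(z)$ equals $|L^{\pm}[0]_{[m]}|_{mm}$, a product of invertible diagonal entries of $L^{\pm}[0]$, hence invertible, so that the inverses $\mathfrak h_i^{\pm}(z)^{-1}$ appearing in the formulas for $\mathfrak e_{ij}^{\pm}$ and $\mathfrak f_{ji}^{\pm}$ genuinely exist in the completed algebra.

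The only real obstacle is the bookkeeping of the completion: one must check that the inverses occurring inside the quasideterminants, namely $(X^{ij})^{-1}$ and $\mathfrak h_i^{\pm}(z)^{-1}$, actually exist in the completed algebra and that the resulting expressions have one-sided support in $z$. Once the elementary observation that the $z^{0}$-term of $L^{\pm}(z)$ is triangular with invertible diagonal is in place, this is routine; all the genuinely algebraic content is contained in the Gelfand--Retakh quasideterminant identities, so one may simply cite them, and indeed the statement itself is quoted from \cite{JLM2}.
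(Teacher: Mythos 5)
The paper states this proposition without proof, citing \cite{JLM2}, and the argument you give is precisely the standard Gelfand--Retakh route used there (the paper even recalls the quasideterminant definition immediately beforehand for this purpose). Your proof is correct: the one substantive point — that the constant term of $L^{\pm}(z)$ is triangular with invertible diagonal, so all leading principal submatrices are invertible over the completed algebra — is exactly the hypothesis needed to run the quasideterminant Gauss decomposition, and you verify it from the defining relations of $U(\bar R)$.
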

	 		
	\subsection{ Homomorphism theorems}
	Now, we can construct a homomorphism from the algebras $U(\bar{R})$  associated with the Lie algebras $\mathfrak{s p}_{2 n-2}$ into $\mathfrak{s p}_{2 n}$ as in the one-parameter case.  Since the rank n will vary, we will indicate the dependence on $n$ by adding a subscript $[n]$ to the $R$-matrices. Consider the algebra $U\left(\bar{R}^{[n-1]}\right)$,  and let the indices of the generators $\ell_{i j}^{ \pm}[\mp m]$ range over the sets $2 \leqslant i, j \leqslant 2^{\prime}$ and $m=0,1, \ldots$, where $i^{\prime}=2 n-i+1$.
		%\begin{theorem}\cite{JLM2}
%			The mappings $r^{c / 2} \mapsto r^{c / 2}, s^{c / 2} \mapsto s^{c / 2}$ and
%			$$
%			\ell_{i j}^{ \pm}(u) \mapsto\left|\begin{array}{cc}
%				\ell_{11}^{ \pm}(u) & \ell_{1 j}^{ \pm}(u) \\
%				\ell_{i 1}^{ \pm}(u) & \boxed{\ell_{i j}^{ \pm}(u)}
%			\end{array}\right|, \quad 2 \leqslant i, j \leqslant 2^{\prime},
%			$$
%			define a homomorphism $U\left(\bar{R}^{[n-1]}\right) \rightarrow U\left(\bar{R}^{[n]}\right)$.
%					\end{theorem}
\begin{theorem}\label{t:homo}\cite{JLM2}
	The mappings $r^{c / 2} \mapsto r^{c / 2}, s^{c / 2} \mapsto s^{c / 2}$ and
	$$
	\ell_{i j}^{ \pm}(u) \mapsto\left|\begin{array}{cccc}
		\ell_{11}^{ \pm}(u) & \ldots & \ell_{1 m}^{ \pm}(u) & \ell_{1 j}^{ \pm}(u) \\
		\ldots & \ldots & \ldots & \ldots \\
		\ell_{m 1}^{ \pm}(u) & \ldots & \ell_{m m}^{ \pm}(u) & \ell_{m j}^{ \pm}(u) \\
		\ell_{i 1}^{ \pm}(u) & \ldots & \ell_{i m}^{ \pm}(u) & \boxed{\ell_{i j}^{ \pm}(u)}
	\end{array}\right|, \quad m+1 \leqslant i, j \leqslant(m+1)^{\prime}
	$$
	defines a homomorphism $\psi_m: U\left(\bar{R}^{[n-m]}\right) \rightarrow U\left(\bar{R}^{[n]}\right)$.
\end{theorem}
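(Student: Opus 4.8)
The strategy, parallel to that of \cite{DF,JLM2}, is to first reduce to the one-step case $m=1$ and then settle that case by means of the Gauss decomposition.

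\emph{Reduction to $m=1$.} Label the generators of $U(\bar R^{[n-1]})$ by the index set $\{2,\dots,2n-1\}$, so that the one-step map $\psi_1\colon U(\bar R^{[n-1]})\to U(\bar R^{[n]})$ is the quasideterminant bordered by the single row and column of index $1$. Iterating this along the chain of ranks $n>n-1>\cdots>n-m$ and invoking the noncommutative Sylvester (hereditary) identity for quasideterminants of Gelfand-Retakh \cite{GR} --- a quasideterminant whose entries are quasideterminants of a fixed matrix bordered by an initial segment of its rows and columns is itself a quasideterminant of that matrix, bordered by the union of the segments --- one identifies the $m$-fold composite $\psi_1\circ\cdots\circ\psi_1$ with the $m$-fold bordered quasideterminant in the statement. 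Hence it suffices to show that $\psi_1$ is a homomorphism.

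\emph{The case $m=1$.} Partition $L^\pm(u)=\begin{pmatrix}\ell_{11}^\pm(u) & B^\pm(u)\\ C^\pm(u) & \widehat L^\pm(u)\end{pmatrix}$ with $\widehat L^\pm(u)$ on the indices $\{2,\dots,2n\}$. From the Gauss decomposition recalled above one has, for $2\le i,j\le 2n-1$, that $\psi_1(\ell_{ij}^\pm(u))$ is the $(i,j)$ entry of the Schur complement
$$
\widetilde L^\pm(u)=\widehat L^\pm(u)-C^\pm(u)\,\mathfrak{h}_1^{\pm}(u)^{-1}B^\pm(u)=\widehat F^\pm(u)\,\widehat H^\pm(u)\,\widehat E^\pm(u),
$$
where $\widehat F^\pm,\widehat H^\pm,\widehat E^\pm$ are the lower-right blocks of the Gauss factors of $L^\pm$. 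I would then check, by the Ding-Frenkel computation with the $RLL$ relations in entry form, that the truncated matrix $\bigl(\widetilde L^\pm_{ij}(u)\bigr)_{2\le i,j\le 2n-1}$ satisfies
$$
\bar R^{[n-1]}\!\bigl(\tfrac{z}{w}\bigr)\,\widetilde L_1^\pm(z)\,\widetilde L_2^\pm(w)=\widetilde L_2^\pm(w)\,\widetilde L_1^\pm(z)\,\bar R^{[n-1]}\!\bigl(\tfrac{z}{w}\bigr)
$$
together with the mixed relation with the central-charge-shifted arguments $z_\pm,w_\mp$, while $r^{c/2},s^{c/2}$ are transported identically. The structural input is that the submatrix of $\bar R^{[n]}(u)$ on $\{2,\dots,2n-1\}$ in both tensor legs reduces, after relabeling the indices and replacing the crossing parameter $r^{-n-1}s^{n+1}$ by $r^{-n}s^{n}$, to $\bar R^{[n-1]}(u)$: the involution $i\mapsto i'=2n+1-i$ preserves $\{2,\dots,2n-1\}$, so the $Q$-term restricts, and the resulting intertwining property of the truncated Schur complement is the content of the key lemma of \cite{JLM2}, whose proof carries over to the present $R$-matrix.

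\emph{Triangularity and normalization.} Finally $\psi_1$ respects the degree-zero relations: since $\ell_{kl}^+[0]=0$ for $k>l$, the constant term of the matrix defining $\psi_1(\ell_{ij}^+(u))$ is upper triangular, so $\psi_1(\ell_{ij}^+[0])=0$ for $i>j$, and dually for $L^-$; the identities $\psi_1(\ell_{ii}^+[0])\psi_1(\ell_{ii}^-[0])=1$ and $\psi_1(\ell_{nn}^+[0])\psi_1(\ell_{n+1,n+1}^+[0])=1$ follow from the corresponding relations in $U(\bar R^{[n]})$ and the multiplicativity of the leading Gauss coefficients $\mathfrak{h}_m^\pm[0]$.

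The hard part will be the $m=1$ step: one must push the Ding-Frenkel intertwining argument through the \emph{whole} system of $RLL$ relations --- both $\pm\pm$ and the mixed $+-$ with its central-charge shift --- while controlling the extra $R$-matrix entries coming from the second parameter, precisely the additional terms already met in Section 3 and in the quantum affine $R$-matrix above. As in the finite-type setting, it is this bookkeeping rather than any new idea that makes the argument heavier than its one-parameter analogue.
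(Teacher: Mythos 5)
The paper offers no proof of this theorem at all — it is stated with the citation \cite{JLM2} — so your sketch is being measured against the argument of that reference, and in outline you have reproduced it faithfully: reduction to $m=1$ via the hereditary (Sylvester) property of quasideterminants, identification of $\psi_1(\ell^{\pm}_{ij}(u))$ with the entries of the Schur complement $\widehat L^{\pm}(u)-C^{\pm}(u)\mathfrak{h}_1^{\pm}(u)^{-1}B^{\pm}(u)$, verification of the $RLL$ relations for that matrix, and a separate check of the degree-zero constraints. One assertion in your middle step, however, is false as stated and happens to sit exactly where the real difficulty lies: the submatrix of $\bar R^{[n]}(u)$ on the indices $\{2,\dots,2n-1\}$ in both tensor legs does \emph{not} reduce to $\bar R^{[n-1]}(u)$ after relabeling. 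The diagonal and permutation parts restrict, and the $Q$-term restricts as a matrix because $i\mapsto i'$ preserves $\{2,\dots,2n-1\}$, but the scalar coefficients — the factor $\bigl(u-(rs^{-1})^{-n-1}\bigr)^{-1}$ and the occurrences of $r^{-n-1}s^{n+1}$ inside $a_{ij}(u)$ — are fixed rational functions of $u$ and do not become $r^{-n}s^{n}$ by "replacing the crossing parameter"; they are genuinely different functions. Consequently the $RLL$ relation for the Schur complement with $\bar R^{[n-1]}$ cannot be obtained by restricting the $RLL$ relation for $L^{\pm}$ to a sub-block: the shift of the crossing parameter from $r^{-n-1}s^{n+1}$ to $r^{-n}s^{n}$ is an \emph{output} of the computation (it emerges from eliminating the first row and column against the $Q$-term), not a property of the restriction. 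You do ultimately defer this to "the key lemma of \cite{JLM2}", which is where the paper itself places the burden, so the structure of your argument survives; but the justification you offer for that lemma would, if taken literally, suggest a trivial restriction proof that does not exist, and in a full write-up this step must be replaced by the actual entry-by-entry verification (carried out in the two-parameter setting, where the extra diagonal $rs$-terms of $\bar R$ contribute additional cross terms, as in Corollary \ref{c:15} and Proposition \ref{p:20} of the present paper).
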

The case of $m=1$ will be essential what follows. Under the hypothesis of Theorem \ref{t:homo}, we have the following useful results, which are
frequently used in computation. %lemma.
%which  will use it frequently.
\begin{coro}\label{c:15}
	For all $1 \leqslant a, b \leqslant m$, and $m+1 \leq i,j \leq n$ or $n+1 \leq i,j \leq(m+1)^{\prime}$, we have
	$$
	\left[\ell_{a b}^{ \pm}(u), \psi_m\left(\ell_{i j}^{ \pm}(v)\right)\right]=0,
	$$
	$$
	\frac{u_{ \pm}-v_{\mp}}{r u_{ \pm}-s v_{\mp}} \ell_{a b}^{ \pm}(u) \psi_m\left(\ell_{i j}^{\mp}(v)\right)=\frac{u_{\mp}-v_{ \pm}}{r u_{\mp}-s v_{ \pm}} \psi_m\left(\ell_{i j}^{\mp}(v)\right) \ell_{a b}^{ \pm}(u).
	$$
	For all $1 \leqslant a, b \leqslant m$, $m+1 \leq i \leq n$ and $n+1 \leq j \leq(m+1)^{\prime}$ we have
	$$
	r s \ell_{a b}^{ \pm}(u) \psi_m\left(\ell_{i j}^{ \pm}(v)\right)=\psi_m\left(\ell_{i j}^{ \pm}(v)\right) \ell_{a b}^{ \pm}(u),
	$$
	$$
	rs\frac{u_{ \pm}-v_{\mp}}{r u_{ \pm}-s v_{\mp}} \ell_{a b}^{ \pm}(u) \psi_m\left(\ell_{i j}^{\mp}(v)\right)=\frac{u_{\mp}-v_{ \pm}}{r u_{\mp}-s v_{ \pm}} \psi_m\left(\ell_{i j}^{\mp}(v)\right) \ell_{a b}^{ \pm}(u).
	$$
		For all $1 \leqslant a, b \leqslant m$, $m+1 \leq j \leq n$ and $n+1 \leq i \leq(m+1)^{\prime}$ we have
			$$
	 \ell_{a b}^{ \pm}(u) \psi_m\left(\ell_{i j}^{ \pm}(v)\right)=	r s\psi_m\left(\ell_{i j}^{ \pm}(v)\right) \ell_{a b}^{ \pm}(u),
		$$
		$$
		\frac{u_{ \pm}-v_{\mp}}{r u_{ \pm}-s v_{\mp}} \ell_{a b}^{ \pm}(u) \psi_m\left(\ell_{i j}^{\mp}(v)\right)=rs\frac{u_{\mp}-v_{ \pm}}{r u_{\mp}-s v_{ \pm}} \psi_m\left(\ell_{i j}^{\mp}(v)\right) \ell_{a b}^{ \pm}(u).
		$$
\end{coro}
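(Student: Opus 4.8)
The plan is to obtain all four families of relations directly from the defining $RLL$-relations of $U(\bar R^{[n]})$, using the block-triangular (partial Gauss) decomposition of $L^{\pm}(u)$ that already underlies Theorem~\ref{t:homo}, and keeping careful track of the two-parameter scalars. First I would fix notation: partition $\{1,\dots,2n\}$ into $\{1,\dots,m\}$, $\{m+1,\dots,(m+1)^{\prime}\}$ and $\{(m+1)^{\prime}+1,\dots,2n\}$, and write $L^{\pm}(u)$ in the corresponding $3\times 3$ block form, with top-left corner $A^{\pm}(u)=(\ell^{\pm}_{ab}(u))_{1\le a,b\le m}$. Expanding the quasideterminants in Theorem~\ref{t:homo} shows that, for $m+1\le i,j\le (m+1)^{\prime}$, the element $\psi_m(\ell^{\pm}_{ij}(u))$ is exactly the $(i,j)$-entry of the Schur complement $\widetilde L^{\pm}(u)=D^{\pm}(u)-C^{\pm}(u)\,A^{\pm}(u)^{-1}B^{\pm}(u)$ of $A^{\pm}(u)$ inside the $\{1,\dots,(m+1)^{\prime}\}$-block of $L^{\pm}(u)$. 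Thus it suffices to establish the stated relations between $\ell^{\pm}_{ab}(u)$, $1\le a,b\le m$, and the entries of $\widetilde L^{\pm}(v)$ (resp. $\widetilde L^{\mp}(v)$).

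Next I would perform the block-triangular reduction of $\bar R^{[n]}(z/w)L^{\pm}_1(z)L^{\pm}_2(w)=L^{\pm}_2(w)L^{\pm}_1(z)\bar R^{[n]}(z/w)$: conjugating by the unipotent block matrices built from $A^{\pm}(z)^{-1}B^{\pm}(z)$ and $C^{\pm}(z)A^{\pm}(z)^{-1}$ (and the analogous $w$-data) replaces $L^{\pm}(z)$, $L^{\pm}(w)$ by their block-$LDU$ factors and so couples $A^{\pm}(z)$ to $\widetilde L^{\pm}(w)$ directly. What makes this work is that every coefficient of $\bar R^{[n]}$ pairing a first-factor index $a\le m$ with a second-factor index in $\{1,\dots,m\}\cup\{m+1,\dots,(m+1)^{\prime}\}$ is of ``$\mathfrak{gl}$-type'' — one of $1$, $\frac{u-1}{ur-s}$, $rs\,\frac{u-1}{ur-s}$, $\frac{r-s}{ur-s}$, $\frac{(r-s)u}{ur-s}$ — so the genuinely symplectic $Q$-term of $\bar R^{[n]}$ never enters, since $a^{\prime}=2n+1-a$ lies outside $\{m+1,\dots,(m+1)^{\prime}\}$ for $a\le m$. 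Reading off the diagonal blocks of the reduced relation then gives that $\ell^{\pm}_{ab}(u)$ commutes with $\psi_m(\ell^{\pm}_{ij}(v))$ when $i,j$ lie on the same side of the center $n\,|\,n+1$, while an extra factor $rs$ (on the side dictated by the order of the indices) appears when $i,j$ lie on opposite sides — this $rs$ being precisely the difference between the values $\frac{u-1}{ur-s}$ and $rs\,\frac{u-1}{ur-s}$ taken by the diagonal part of $\bar R^{[n]}$ on center-crossing entries.

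The mixed relations involving $\psi_m(\ell^{\mp}_{ij}(v))$ are obtained identically, starting instead from $\bar R^{[n]}(z_+/w_-)L^{+}_1(z)L^{-}_2(w)=L^{-}_2(w)L^{+}_1(z)\bar R^{[n]}(z_-/w_+)$: the unequal diagonal coefficients on the two sides produce the rational prefactors $\frac{u_\pm-v_\mp}{ru_\pm-sv_\mp}$ on the left and $\frac{u_\mp-v_\pm}{ru_\mp-sv_\pm}$ on the right, with the $rs$ inserted exactly as in the $\pm\pm$ case for the opposite-side entries. As an alternative for general $m$, one may reduce to the case $m=1$ via the composition $\psi_m=\psi_1\circ\cdots\circ\psi_1$ of successive Schur complements together with a Sylvester-type quasideterminant identity.

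The main obstacle will be the bookkeeping inside the block-triangular reduction: one has to check that conjugation by $\ell^{\pm}_{ab}(u)$ rescales the off-diagonal blocks $B^{\pm},C^{\pm}$ and the corner inverse $(A^{\pm})^{-1}$ by mutually cancelling factors, so that the Schur complement $\widetilde L^{\pm}$ transforms by a single scalar rather than its quasideterminant pieces transforming incompatibly. This is precisely the point at which the argument departs from the one-parameter case \cite{JLM2}: one must verify that the $rs$-factors arising from the center-crossing entries of $\bar R^{[n]}$ are globally consistent with the normalization $\ell^{+}_{nn}[0]\,\ell^{+}_{n+1,n+1}[0]=1$ (and $\ell^{+}_{ii}[0]\,\ell^{-}_{ii}[0]=1$). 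Once this consistency is in hand, all four families of relations follow uniformly.
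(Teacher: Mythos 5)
Your plan is correct and follows essentially the same route the paper intends: identify $\psi_m(\ell^{\pm}_{ij}(v))$ with the Schur-complement entries coming from the quasideterminants of Theorem~\ref{t:homo}, and then read the stated relations off the defining $RLL$-relations, noting that the $Q$-part of $\bar R^{[n]}$ cannot pair a first-factor index $a\le m$ with a second-factor index in $\{m+1,\dots,(m+1)'\}$ (since that would force $i=a'>(m+1)'$), so only the type-$A$ diagonal and permutation terms survive. Your identification of the source of the extra $rs$ — the diagonal coefficients $rs\,\frac{u-1}{ur-s}$ versus $\frac{u-1}{ur-s}$ on the two sides of the center $n\,|\,n+1$ — and of the rational prefactors in the mixed case from $\bar R(z_+/w_-)$ versus $\bar R(z_-/w_+)$ is exactly what produces each clause of the corollary.
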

\begin{proof}
	It is easy to check by definition relations of $\ell_{i j}^{ \pm}(u)$ and Theorem \ref{t:homo}.
\end{proof}
		\subsection{Images of the generators under the homomorphism $\psi_m$}%\cite{JLM2}
		Suppose that $0 \leqslant m<n$. We will use the superscript $[n-m]$ to indicate square submatrices corresponding to rows and columns labeled by $m+1, m+2, \ldots,(m+1)^{\prime}$.  In particular, we set
		$$
		\mathcal{F}^{ \pm[n-m]}(u)=\left[\begin{array}{cccc}
			1 & 0 & \cdots & 0 \\
			\mathfrak{f}_{m+2 m+1}^{ \pm}(u) & 1 & \cdots & 0 \\
			\vdots & \ddots & \ddots & \vdots \\
			\mathfrak{f}_{(m+1)^{\prime} m+1}^{ \pm}(u) & \ldots & \mathfrak{f}_{(m+1)^{\prime}(m+2)^{\prime}}^{ \pm}(u) & 1
		\end{array}\right],
		$$
		$$
		\mathcal{E}^{ \pm[n-m]}(u)=\left[\begin{array}{cccc}
			1 & \mathfrak{e}_{m+1 m+2}^{ \pm}(u) & \ldots & \mathfrak{e}_{m+1(m+1)^{\prime}}^{ \pm}(u) \\
			0 & 1 & \ddots & \vdots \\
			\vdots & \vdots & \ddots & \mathfrak{e}_{(m+2)^{\prime}(m+1)^{\prime}}^{ \pm}(u) \\
			0 & 0 & \ldots & 1
		\end{array}\right],
		$$
		and $\mathcal{H}^{ \pm[n-m]}(u)=\operatorname{diag}\left[\mathfrak{h}_{m+1}^{ \pm}(u), \ldots, \mathfrak{h}_{(m+1)^{\prime}}^{ \pm}(u)\right].$ Furthermore, introduce the products of these matrices by
		$$
		\mathcal{L}^{ \pm[n-m]}(u)=\mathcal{F}^{ \pm[n-m]}(u) \mathcal{H}^{ \pm[n-m]}(u) \mathcal{E}^{ \pm[n-m]}(u).
		$$
		The entries of $\mathcal{L}^{ \pm[n-m]}(u)$ will be denoted by $\ell_{i j}^{ \pm[n-m]}(u).$
	\begin{remark}
		In particular, when $m=n-1$, we can get
		$$
		\mathcal{F}^{ \pm[1]}(u)=\left(\begin{array}{cc}
			1 & 0 \\
			\mathfrak{f}_{n+1 n}^{ \pm}(u) & 1
		\end{array}\right), \quad \mathcal{H}^{ \pm[1]}(u)=\left(\begin{array}{cc}
			\mathfrak{h}_n^{ \pm}(u) & 0 \\
			0 & \mathfrak{h}_{n+1}^{ \pm}(u)
		\end{array}\right), \quad \mathcal{E}^{ \pm[1]}(u)=\left(\begin{array}{cc}
			1 & \mathfrak{e}_{n n+1}^{ \pm}(u) \\
			0 & 1
		\end{array}\right).
		$$
		Therefore,
		$$
		\ell_{n n}^{ \pm[1]}(u)=\mathfrak{h}_n^{ \pm}(u), \quad \ell_{n n+1}^{ \pm[1]}(u)=\mathfrak{h}_n^{ \pm}(u) \mathfrak{e}_{n n+1}^{ \pm}(u), \quad \ell_{n+1 n}^{ \pm[1]}(u)=\mathfrak{f}_{n+1 n}^{ \pm}(u) \mathfrak{h}_n^{ \pm}(u),
		$$
		$$
		\ell_{n+1 n+1}^{ \pm[1]}(u)=\mathfrak{h}_{n+1}^{ \pm}(u)+\mathfrak{f}_{n+1 n}^{ \pm}(u) \mathfrak{h}_n^{ \pm}(u) \mathfrak{e}_{n n+1}^{ \pm}(u).
		$$
		We will use these equations frequently.
	\end{remark}
\begin{proposition}\cite{JLM2}
The series $\ell_{i j}^{ \pm[n-m]}(u)$ coincides with the image of the generator series $\ell_{i j}^{ \pm}(u)$ of the extended quantum affine algebra $U\left(\bar{R}^{[n-m]}\right),$
$$
\ell_{i j}^{ \pm[n-m]}(u)=\psi_m\left(\ell_{i j}^{ \pm}(u)\right), \quad m+1 \leqslant i, j \leqslant(m+1)^{\prime}.
$$ 	
\end{proposition}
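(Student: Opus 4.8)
The plan is to prove the statement by induction on $m$, reducing everything to the single essential case $m=1$, which is the content of Corollary \ref{c:15} and the homomorphism $\psi_1$ of Theorem \ref{t:homo}. The key observation is that the composite $\psi_m$ can be realized as an $m$-fold iteration of $\psi_1$-type maps: deleting the first $m$ rows and columns one at a time. Concretely, one checks that $\psi_m = \psi_1 \circ \psi_1' \circ \cdots$, where at each stage the relevant $R$-matrix drops from $\bar R^{[n-k]}$ to $\bar R^{[n-k-1]}$ and the quasideterminant in Theorem \ref{t:homo} picks up one more bordering row and column. Hence it suffices to verify the identity $\ell_{ij}^{\pm[n-1]}(u) = \psi_1(\ell_{ij}^{\pm}(u))$ for $2 \leqslant i,j \leqslant 2'$, and then quote the quasideterminant identity for nested submatrices (the ``hereditary'' property of quasideterminants, as in Gelfand--Retakh) to collapse the $m$-fold composite into a single $(m{+}1)\times(m{+}1)$ bordered quasideterminant, matching the formula in Theorem \ref{t:homo}.

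For the base case, I would expand both sides directly. By definition, $\psi_1(\ell_{ij}^{\pm}(u))$ is the $2\times 2$ bordered quasideterminant
$$
\left|\begin{array}{cc} \ell_{11}^{\pm}(u) & \ell_{1j}^{\pm}(u) \\ \ell_{i1}^{\pm}(u) & \boxed{\ell_{ij}^{\pm}(u)} \end{array}\right| = \ell_{ij}^{\pm}(u) - \ell_{i1}^{\pm}(u)\,\ell_{11}^{\pm}(u)^{-1}\,\ell_{1j}^{\pm}(u).
$$
On the other hand, $\ell_{ij}^{\pm[n-1]}(u)$ is by definition the $(i,j)$ entry of the product $\mathcal F^{\pm[n-1]}(u)\mathcal H^{\pm[n-1]}(u)\mathcal E^{\pm[n-1]}(u)$. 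Using the Gauss decomposition $L^{\pm}(u) = F^{\pm}(u) H^{\pm}(u) E^{\pm}(u)$ and the fact that $\mathfrak h_1^{\pm}(u) = \ell_{11}^{\pm}(u)$, one expresses $\ell_{ij}^{\pm}(u)$, $\ell_{i1}^{\pm}(u)$, $\ell_{1j}^{\pm}(u)$ in terms of the Gauss coordinates $\mathfrak f_{\bullet 1}^{\pm}, \mathfrak h_1^{\pm}, \mathfrak e_{1\bullet}^{\pm}$ together with the $(n{-}1)$-block coordinates $\mathfrak f^{\pm}, \mathfrak h^{\pm}, \mathfrak e^{\pm}$ appearing in $\mathcal F^{\pm[n-1]}, \mathcal H^{\pm[n-1]}, \mathcal E^{\pm[n-1]}$. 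Substituting into the right-hand side, the terms involving $\mathfrak f_{i1}^{\pm}\mathfrak h_1^{\pm}\mathfrak e_{1j}^{\pm}$ cancel exactly against $\ell_{i1}^{\pm}(u)\ell_{11}^{\pm}(u)^{-1}\ell_{1j}^{\pm}(u)$, leaving precisely the $(i,j)$ entry of the $(n{-}1)$-block product. This is the standard ``Gauss decomposition respects principal submatrices'' computation; the two-parameter deformation does not affect it since it is an identity in the associative algebra before using any $RLL$ relations.

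The main obstacle is \emph{not} this algebraic bookkeeping but rather verifying that $\psi_m$ is genuinely an algebra homomorphism into $U(\bar R^{[n]})$ — i.e., that the images $\psi_m(\ell_{ij}^{\pm}(u))$ satisfy the defining $RLL$ relations of $U(\bar R^{[n-m]})$. However, since Theorem \ref{t:homo} is quoted from \cite{JLM2} and already asserts this, the only genuine work here is the identification of the two descriptions of the image. I would therefore organize the proof as: (1) reduce to $m=1$ by the nested-quasideterminant identity; (2) prove the $m=1$ case by the explicit Gauss-coordinate cancellation above; (3) remark that the formula $\mathfrak h_i^{\pm[n-m]}(u) = \mathfrak h_i^{\pm}(u)$, $\mathfrak e_{ij}^{\pm[n-m]}(u) = \mathfrak e_{ij}^{\pm}(u)$, $\mathfrak f_{ji}^{\pm[n-m]}(u) = \mathfrak f_{ji}^{\pm}(u)$ for $m+1 \leqslant i < j \leqslant (m+1)'$ follows immediately, since the Gauss coordinates of a submatrix agree with those of the ambient matrix on the relevant index range — which is exactly the statement needed in the subsequent sections to transport the Drinfeld generators through $\psi_m$.
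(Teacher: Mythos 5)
Your argument is correct, and in substance it is the standard one: the identity is a purely formal statement about Gauss decompositions, independent of the $RLL$ relations, as you rightly emphasize. The paper itself gives no proof, citing \cite{JLM2}; there the identification is done in a single step by observing that the $(m+1)\times(m+1)$ bordered quasideterminant defining $\psi_m(\ell_{ij}^{\pm}(u))$ is exactly the $(i,j)$ entry of the Schur complement of the leading $m\times m$ block of $L^{\pm}(u)$, and that writing $L^{\pm}=F^{\pm}H^{\pm}E^{\pm}$ in $2\times 2$ block form makes this Schur complement equal to $F^{\pm}_{22}H^{\pm}_{22}E^{\pm}_{22}=\mathcal{L}^{\pm[n-m]}(u)$ — i.e.\ your base-case cancellation carried out for all $m$ at once. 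Your route via induction on $m$, with the inductive step supplied by the noncommutative Sylvester/heredity identity of Gelfand--Retakh, reaches the same conclusion at the cost of one additional (known) quasideterminant identity that the direct block computation avoids. Two points worth making explicit in either version: the invertibility of the leading $m\times m$ block, which follows from the Gauss decomposition since that block factors as a product of unitriangular matrices and an invertible diagonal; and the fact that the homomorphism property of $\psi_m$ is entirely quoted from Theorem \ref{t:homo}, so that, as you note, nothing in this proposition actually touches the two-parameter $R$-matrix.
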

\begin{coro}\cite{JLM2}\label{c:18}
 The following relations hold in $U\left(\bar{R}^{[n]}\right)$
 $$
 \bar{R}_{12}^{[n-m]}(u / v) \mathcal{L}_1^{ \pm[n-m]}(u) \mathcal{L}_2^{ \pm[n-m]}(v)=\mathcal{L}_2^{ \pm[n-m]}(v) \mathcal{L}_1^{ \pm[n-m]}(u) \bar{R}_{12}^{[n-m]}(u / v),
 $$	
 $$
 \bar{R}_{12}^{[n-m]}\left(u_{+} / v_{-}\right) \mathcal{L}_1^{+[n-m]}(u) \mathcal{L}_2^{-[n-m]}(v)=\mathcal{L}_2^{-[n-m]}(v) \mathcal{L}_1^{+[n-m]}(u) \bar{R}_{12}^{[n-m]}\left(u_{-} / v_{+}\right).
 $$

 Now we can have more explicit description of the relations in $U\left(\bar{R}^{[n]}\right)$.
\end{coro}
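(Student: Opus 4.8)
The plan is to obtain both identities by transporting the defining $RLL$-relations of the smaller algebra $U(\bar R^{[n-m]})$ forward along the homomorphism $\psi_m$ of Theorem~\ref{t:homo}. By the preceding Proposition, for $m+1\leqslant i,j\leqslant (m+1)'$ the series $\ell_{ij}^{\pm[n-m]}(u)$ is precisely $\psi_m(\ell_{ij}^{\pm}(u))$, where on the right $\ell_{ij}^{\pm}(u)$ denotes a generating series of the abstract algebra $U(\bar R^{[n-m]})$, its natural index set $\{1,\dots,2(n-m)\}$ being relabelled to $\{m+1,\dots,(m+1)'\}$ via $i\mapsto i+m$ (a relabelling compatible with the prime involution, since the prime of $i+m$ in the size-$2n$ matrix is $2n-i-m+1$, matching the image of $i'$ in the smaller algebra). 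Equivalently, $\mathcal L^{\pm[n-m]}(u)$ is obtained by applying $\psi_m$ entrywise to the generator matrix $L^{\pm}(u)$ of $U(\bar R^{[n-m]})$.

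First I would recall that, by definition of $U(\bar R^{[n-m]})$, its generator matrices satisfy
$$
\bar R^{[n-m]}(z/w)\,L_1^{\pm}(z)L_2^{\pm}(w)=L_2^{\pm}(w)L_1^{\pm}(z)\,\bar R^{[n-m]}(z/w),
$$
$$
\bar R^{[n-m]}(z_+/w_-)\,L_1^{+}(z)L_2^{-}(w)=L_2^{-}(w)L_1^{+}(z)\,\bar R^{[n-m]}(z_-/w_+),
$$
with $z_+=zr^{c/2}$, $z_-=zs^{c/2}$ and likewise for $w$. I would then apply $\psi_m$ to every matrix coefficient of these identities. Since $\psi_m$ is an algebra homomorphism it is compatible entry by entry with the products $L_1^{\pm}L_2^{\pm}$ and $L_1^{+}L_2^{-}$; since the entries of $\bar R^{[n-m]}$ lie in the base field $\mathbb K$ — they are read off from the explicit formula for $\bar R(u)$ with $n$ replaced by $n-m$ — they are fixed by $\psi_m$, so one may freely slide $\bar R^{[n-m]}$ through $\psi_m$; and since $\psi_m(r^{c/2})=r^{c/2}$ and $\psi_m(s^{c/2})=s^{c/2}$, the shifted arguments $z_\pm,w_\pm$ are unchanged. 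Hence the images of the two displayed relations are exactly the two asserted identities, with $L_i^{\pm}$ replaced by $\mathcal L_i^{\pm[n-m]}$.

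The real content has therefore already been spent: it lies in Theorem~\ref{t:homo} (that $\psi_m$ is a well-defined algebra homomorphism) and in the preceding Proposition (that the quasideterminant entries $\ell_{ij}^{\pm[n-m]}(u)$ coincide with the images $\psi_m(\ell_{ij}^{\pm}(u))$, which in particular legitimizes expanding the factors $\mathfrak h_i^{\pm}(u)^{-1}$ occurring inside the quasideterminants coefficientwise in $U(\bar R^{[n]})$). Given those inputs, the only point still requiring attention — and the one I expect to be the mildest of obstacles — is the bookkeeping that both tensor legs carry the index set $\{m+1,\dots,(m+1)'\}$ consistently, so that the numerical matrix governing the relation is indeed the $2(n-m)\times 2(n-m)$ matrix $\bar R^{[n-m]}$; this is built into the nested shape of $\bar R$. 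With that settled, applying $\psi_m$ to the defining $RLL$-relations of $U(\bar R^{[n-m]})$ yields the corollary, and Corollary~\ref{c:15} is then obtained as a down-to-earth list of special cases rather than as a separate argument.
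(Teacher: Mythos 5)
Your argument is correct and is exactly the intended one: the corollary is the image under the homomorphism $\psi_m$ of Theorem~\ref{t:homo} of the defining $RLL$-relations of $U(\bar R^{[n-m]})$, using the identification $\ell_{ij}^{\pm[n-m]}(u)=\psi_m(\ell_{ij}^{\pm}(u))$ from the preceding proposition together with the facts that $\psi_m$ fixes the scalar entries of $\bar R^{[n-m]}$ and the central elements $r^{c/2}$, $s^{c/2}$. Only your closing aside is off: Corollary~\ref{c:15} is \emph{not} a list of special cases of this statement, since it involves the generators $\ell_{ab}^{\pm}(u)$ with $a,b\leqslant m$, which lie outside the image of $\psi_m$, and it must instead be extracted from the defining relations of $U(\bar R^{[n]})$ itself combined with Theorem~\ref{t:homo}.
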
	
	\begin{proposition}	
		Suppose that $m+1 \leqslant j, k, l \leqslant(m+1)^{\prime}$ and $j \neq l^{\prime}$. Then, the following relations hold in $U\left(\bar{R}^{[n]}\right)$\\
		If j = l and $m+1 \leq l \leq n$, then
		$$
		\mathfrak{e}_{m j}^{ \pm}(u) \ell_{k l}^{\mp[n-m]}(v)=r^{-1} s^{-1} \frac{r u_{\mp}-s v_{ \pm}}{u_{\mp}-v_{ \pm}} \ell_{k j}^{\mp[n-m]}(v) \mathfrak{e}_{m l}^{ \pm}(u)-r^{-1} s^{-1} \frac{(r-s) u_{\mp}}{u_{\mp}-v_{ \pm}} \ell_{k j}^{\mp[n-m]}(v) \mathfrak{e}_{m j}^{\mp}(v),
		$$
		$$
		\mathfrak{e}_{m j}^{ \pm}(u) \ell_{k l}^{ \pm[n-m]}(v)=r^{-1} s^{-1} \frac{r u-s v}{u-v} \ell_{k j}^{ \pm[n-m]}(v) \mathfrak{e}_{m l}^{ \pm}(u)-r^{-1} s^{-1} \frac{(r-s) u}{u-v} \ell_{k j}^{ \pm[n-m]}(v) \mathfrak{e}_{m j}^{ \pm}(v);
		$$
		If j = l and $n+1 \leq l \leq(m+1)^{\prime}$, then
$$
\mathfrak{e}_{m j}^{ \pm}(u) \ell_{k l}^{\mp[n-m]}(v)=\frac{r u_{\mp}-s v_{ \pm}}{u_{\mp}-v_{ \pm}} \ell_{k j}^{\mp[n-m]}(v) \mathfrak{e}_{m l}^{ \pm}(u)-\frac{(r-s) u_{\mp}}{u_{\mp}-v_{ \pm}} \ell_{k j}^{\mp[n-m]}(v) \mathfrak{e}_{m j}^{\mp}(v),
$$
			$$
	\mathfrak{e}_{m j}^{ \pm}(u) \ell_{k l}^{ \pm[n-m]}(v)= \frac{r u-s v}{u-v} \ell_{k j}^{ \pm[n-m]}(v) \mathfrak{e}_{m l}^{ \pm}(u)- \frac{(r-s) u}{u-v} \ell_{k j}^{ \pm[n-m]}(v) \mathfrak{e}_{m j}^{ \pm}(v);
	$$
		If $j<l$ and $m+1 \leq l \leq n$, then
$$
\left[\mathfrak{e}_{m j}^{ \pm}(u), \ell_{k l}^{\mp[n-m]}(v)\right]=r^{-1} s^{-1} \frac{(r-s) v_{ \pm}}{u_{\mp}-v_{ \pm}} \ell_{k j}^{\mp[n-m]}(v) \mathfrak{e}_{m l}^{ \pm}(u)-r^{-1} s^{-1} \frac{(r-s) u_{\mp}}{u_{\mp}-v_{ \pm}} \ell_{k j}^{\mp[n-m]}(v) \mathfrak{e}_{m l}^{\mp}(v),
$$
	$$
\left[\mathfrak{e}_{m j}^{ \pm}(u), \ell_{k l}^{ \pm[n-m]}(v)\right]=r^{-1}s^{-1}\frac{\left(r-s\right) v}{u-v} \ell_{k j}^{ \pm[n-m]}(v) \mathfrak{e}_{m l}^{ \pm}(u)-r^{-1}s^{-1}\frac{\left(r-s\right) u}{u-v} \ell_{k j}^{ \pm[n-m]}(v) \mathfrak{e}_{m l}^{ \pm}(v);
$$	
	If $j<l$ and $n+1 \leq l \leq(m+1)^{\prime}$, then
	$$
	\left[\mathfrak{e}_{m j}^{ \pm}(u), \ell_{k l}^{\mp[n-m]}(v)\right]=\frac{(r-s) v_{ \pm}}{u_{\mp}-v_{ \pm}} \ell_{k j}^{\mp[n-m]}(v) \mathfrak{e}_{m l}^{ \pm}(u)-\frac{(r-s) u_{\mp}}{u_{\mp}-v_{ \pm}} \ell_{k j}^{\mp[n-m]}(v) \mathfrak{e}_{m l}^{\mp}(v),
	$$
	$$
	\left[\mathfrak{e}_{m j}^{ \pm}(u), \ell_{k l}^{ \pm[n-m]}(v)\right]=\frac{\left(r-s\right) v}{u-v} \ell_{k j}^{ \pm[n-m]}(v) \mathfrak{e}_{m l}^{ \pm}(u)-\frac{\left(r-s\right) u}{u-v} \ell_{k j}^{ \pm[n-m]}(v) \mathfrak{e}_{m l}^{ \pm}(v);
	$$
	If $j>l$ and $m+1 \leq l \leq n$, then
	$$
	\left[\mathfrak{e}_{m j}^{ \pm}(u), \ell_{k l}^{\mp[n-m]}(v)\right]=r^{-1} s^{-1} \frac{(r-s) u_{\mp}}{u_{\mp}-v_{ \pm}} \ell_{k j}^{\mp[n-m]}(v)\left(\mathfrak{e}_{m l}^{ \pm}(u)-\mathfrak{e}_{m l}^{\mp}(v)\right),
	$$	
	$$
	\left[\mathfrak{e}_{m j}^{ \pm}(u), \ell_{k l}^{ \pm[n-m]}(v)\right]=r^{-1}s^{-1}\frac{\left(r-s\right) u}{u-v} \ell_{k j}^{ \pm[n-m]}(v)\left(\mathfrak{e}_{m l}^{ \pm}(u)-\mathfrak{e}_{m l}^{ \pm}(v)\right);
	$$
	If $j>l$ and $n+1 \leq l \leq(m+1)^{\prime}$, then
	$$
	\left[\mathfrak{e}_{m j}^{ \pm}(u), \ell_{k l}^{\mp[n-m]}(v)\right]=\frac{(r-s) u_{\mp}}{u_{\mp}-v_{ \pm}} \ell_{k j}^{\mp[n-m]}(v)\left(\mathfrak{e}_{m l}^{ \pm}(u)-\mathfrak{e}_{m l}^{\mp}(v)\right),
	$$
	$$
	\left[\mathfrak{e}_{m j}^{ \pm}(u), \ell_{k l}^{ \pm[n-m]}(v)\right]=\frac{\left(r-s\right) u}{u-v} \ell_{k j}^{ \pm[n-m]}(v)\left(\mathfrak{e}_{m l}^{ \pm}(u)-\mathfrak{e}_{m l}^{ \pm}(v)\right).
	$$
	\begin{proof}
		It is sufficient to verify the relations for $m=1$,  the general case then follows by applying the homomorphism $\psi_m$. The
		calculations are similar for all the relations, and we show some of the essential computations. First, we verify that
	\begin{flalign*}
		\left[\mathfrak{e}_{m j}^{ \pm}(u), \ell_{k l}^{\mp[n-m]}(v)\right]=r^{-1} s^{-1} \frac{(r-s) v_{ \pm}}{u_{\mp}-v_{ \pm}} \ell_{k j}^{\mp[n-m]}(v) \mathfrak{e}_{m l}^{ \pm}(u)-r^{-1} s^{-1} \frac{(r-s) u_{\mp}}{u_{\mp}-v_{ \pm}} \ell_{k j}^{\mp[n-m]}(v) \mathfrak{e}_{m l}^{\mp}(v).
	\end{flalign*}	
		Denote
	\begin{flalign*}
		\bar{R}_{r s}(u)=r s \frac{u-1}{u r-s}\left(\sum_{\substack{1 \leq i \leq n-1 \\ i +1 \leq j \leq n}} E_{i i} \otimes E_{j j}+\sum_{\substack{1 \leq i \leq n-1 \\ i+1 \leq j \leq 2 n}} E_{i i} \otimes E_{j j}+\sum_{j=n+2}^{2 n} E_{n n} \otimes E_{j j}\right.
	\end{flalign*}
	\begin{flalign*}
\left.+\sum_{\substack{n+1 \leq j \leq 2 n-1 \\ j+1 \leq i \leq 2 n}} E_{i i} \otimes E_{j j}+\sum_{\substack{1 \leq i \leq n-1 \\ n+1 \leq j \leq 2 n-i}} E_{j j} \otimes E_{i i}\right),
	\end{flalign*}
We also need $\bar{R}_{11}(u)=(rs)^{-1}(\bar{R}_{r s}(u))^{op}$, where $op$ means flipping the tensor product: $a\otimes b\mapsto b\otimes a$.
	%\begin{flalign*}
%\bar{R}_{11}(u)=\frac{u-1}{u r-s}\left(\sum_{\substack{1 \leq i \leq n-1 \\ i+1 \leq j \leq n}} E_{j j} \otimes E_{i i}+\sum_{\substack{1 \leq i \leq n-1 \\ i+1 \leq j \leq 2 n}} E_{j j} \otimes E_{i i}+\sum_{j=n+2}^{2 n} E_{j j} \otimes E_{n n}\right.
%	\end{flalign*}
%	\begin{flalign*}
%\left.\sum_{\substack{n+1 \leq j \leq 2 n-1 \\ j+1 \leq i \leq 2 n}} E_{j j} \otimes E_{i i}+\sum_{\substack{1 \leq i \leq n-1 \\ n+1 \leq j \leq 2 n-i}} E_{i i} \otimes E_{j j}\right).
%	\end{flalign*}

		When $m+1 \leq k \leq n$ and $E_{jj}\otimes E_{ll}$ appears in $\bar{R}_{r s}(u)$,
		the defining relations give that
		\begin{equation}
	\begin{aligned}\label{pro19.1}
r s \frac{u_{ \pm}-v_{\mp}}{r u_{ \pm}-s^{-1} v_{\mp}} \ell_{1 j}^{ \pm}(u) \ell_{k l}^{\mp}(v)+\frac{(r-s) u_{ \pm}}{r u_{ \pm}-s v_{\mp}} \ell_{k j}^{ \pm}(u) \ell_{1 l}^{\mp}(v)\\
=r s \frac{u_{\mp}-v_{ \pm}}{r u_{\mp}-s v_{ \pm}} \ell_{k l}^{\mp}(v) \ell_{1 j}^{ \pm}(u)+\frac{(r-s) v_{ \pm}}{r u_{\mp}-s v_{ \pm}} \ell_{k j}^{\mp}(v) \ell_{1 l}^{ \pm}(u) .
\end{aligned}
\end{equation}
		Since $\ell_{k l}^{\mp}(v)=\ell_{k l}^{\mp[n-1]}(v)+\mathfrak{f}_{k 1}^{\mp}(v) \mathfrak{h}_1^{\mp}(v) \mathfrak{e}_{1 l}^{\mp}(v)$, we can write the left-hand  side of (\ref{pro19.1}) as
	\begin{flalign*}
		r s \frac{u_{ \pm}-v_{\mp}}{r u_{ \pm}-s v_{\mp}} \ell_{1 j}^{ \pm}(u) \ell_{k l}^{\mp[n-1]}(v)+r s \frac{u_{ \pm}-v_{\mp}}{r u_{ \pm}-s v_{\mp}} \ell_{1 j}^{ \pm}(u) \mathfrak{f}_{k 1}^{\mp}(v) \mathfrak{h}_1^{\mp}(v) \mathfrak{e}_{1 l}^{\mp}(v)+\frac{(r-s) u_{\mp}}{r u_{ \pm}-s v_{\mp}} \ell_{k j}^{ \pm}(u) \ell_{1 l}^{\mp}(v).
\end{flalign*}
By the defining relations, we have
	\begin{flalign*}
r s \frac{u_{ \pm}-v_{\mp}}{r u_{ \pm}-s v_{\mp}} \ell_{1 j}^{ \pm}(u) \ell_{k 1}^{\mp}(v)+\frac{(r-s) u_{ \pm}}{r u_{ \pm}-s v_{\mp}} \ell_{k j}^{ \pm}(u) \ell_{11}^{\mp}(v)\\
=r s \frac{u_{\mp}-v_{ \pm}}{r u_{\mp}-s v_{ \pm}} \ell_{k 1}^{\mp}(v) \ell_{1 j}^{ \pm}(u)+\frac{(r-s) u_{\mp}}{r u_{\mp}-s v_{ \pm}} \ell_{k j}^{\mp}(v) \ell_{11}^{ \pm}(u) .
\end{flalign*}
Hence, the left-hand side of (\ref{pro19.1}) equals to
	\begin{flalign*}
r s \frac{u_{ \pm}-v_{\mp}}{r u_{ \pm}-s v_{\mp}} \ell_{1 j}^{ \pm}(u) \ell_{k l}^{\mp[n-1]}(v)+r s \frac{u_{\mp}-v_{ \pm}}{r u_{\mp}-s v_{ \pm}} f_{k 1}^{\mp}(v) \ell_{11}^{\mp}(v) \ell_{1 j}^{ \pm}(u) \mathfrak{e}_{1 l}^{\mp}(v)
\end{flalign*}
	\begin{flalign*}
+\frac{\left(r-s\right) u_{\mp}}{r u_{\mp}-s v_{ \pm}} \ell_{k j}^{\mp}(v) \ell_{11}^{ \pm}(u) \mathfrak{e}_{1 l}^{\mp}(v) .
\end{flalign*}
Furthermore, using the relation
	\begin{flalign*}
\ell_{1 j}^{ \pm}(u) \ell_{11}^{\mp}(v)=rs\frac{u_{\mp}-v_{ \pm}}{r u_{\mp}-s v_{ \pm}} \ell_{11}^{\mp}(v) \ell_{1 j}^{ \pm}(u)+\frac{\left(r-s\right) u_{\mp}}{r u_{\mp}-s v_{ \pm}} \ell_{1 j}^{\mp}(v) \ell_{11}^{ \pm}(u),
\end{flalign*}
we turn the left-hand side of (\ref{pro19.1}) to the following form
	\begin{flalign*}
rs\frac{u_{ \pm}-v_{\mp}}{r u_{ \pm}-s^{-1} v_{\mp}} \ell_{1 j}^{ \pm}(u) \ell_{k l}^{\mp[n-1]}(v)+\mathfrak{f}_{k 1}^{\mp}(v) \ell_{1 j}^{ \pm}(u) \ell_{1 l}^{\mp}(v)+\frac{\left(r-s\right) u_{\mp}}{r u_{\mp}-s^{-1} v_{ \pm}} \ell_{k j}^{\mp[n-1]}(v) \ell_{11}^{ \pm}(u) \mathfrak{e}_{1 l}^{\mp}(v).
\end{flalign*}
For $j<l$, we have
	\begin{flalign*}
\ell_{1 j}^{ \pm}(u) \ell_{1 l}^{\mp}(v)=rs\frac{u_{\mp}-v_{ \pm}}{r u_{\mp}-s v_{ \pm}} \ell_{1 l}^{\mp}(v) \ell_{1 j}^{ \pm}(u)+\frac{\left(r-s\right) v_{ \pm}}{r u_{\mp}-s v_{ \pm}} \ell_{1 j}^{\mp}(v) \ell_{1 l}^{ \pm}(u),
\end{flalign*}
so that the left-hand side of (\ref{pro19.1})  becomes
	\begin{flalign*}
r s \frac{u_{ \pm}-v_{\mp}}{r u_{ \pm}-s v_{\mp}} \ell_{1 j}^{ \pm}(u) \ell_{k l}^{\mp[n-1]}(v)-\frac{u_{\mp}-v_{ \pm}}{r u_{\mp}-s v_{ \pm}} \ell_{k l}^{\mp[n-1]}(v) \ell_{1 j}^{ \pm}(u)
\end{flalign*}
	\begin{flalign*}
=r s \frac{(r-s) v_{ \pm}}{r u_{\mp}-s v_{ \pm}} \ell_{k j}^{\mp[n-1]}(v) \ell_{11}^{ \pm}(u) \mathfrak{e}_{1 l}^{ \pm}(u)-\frac{(r-s) u_{\mp}}{r u_{\mp}-s v_{ \pm}} \ell_{k j}^{\mp[n-1]}(v) \ell_{11}^{ \pm}(u) \mathfrak{e}_{1 l}^{\mp}(v).
\end{flalign*}
Finally, Corollary \ref{c:15} implies
	\begin{flalign*}
\begin{aligned}
	& \frac{u_{ \pm}-v_{\mp}}{r u_{ \pm}-sv_{\mp}} \ell_{11}^{ \pm}(u) \ell_{k l}^{\mp[n-1]}(v)=\frac{u_{\mp}-v_{ \pm}}{r u_{\mp}-s v_{ \pm}} \ell_{k l}^{\mp[n-1]}(v) \ell_{11}^{ \pm}(u), \\
	& \frac{u_{ \pm}-v_{\mp}}{r u_{ \pm}-s v_{\mp}} \ell_{11}^{ \pm}(u) \ell_{k j}^{\mp[n-1]}(v)=\frac{u_{\mp}-v_{ \pm}}{r u_{\mp}-s v_{ \pm}} \ell_{k j}^{\mp[n-1]}(v) \ell_{11}^{ \pm}(u).
\end{aligned}
	\end{flalign*}

Note that when $m+1 \leq k \leq n$ and $E_{jj}\otimes E_{ll}$ appears in $\bar{R}_{11}(u)$, $n+1 \leq k \leq(m+1)^{\prime}$ and $E_{jj}\otimes E_{ll}$ appears in $\bar{R}_{r s}(u)$, or $n+1 \leq k \leq(m+1)^{\prime}$ and $E_{jj}\otimes E_{ll}$ appears in $ \bar{R}_{11}(u)$, the computations are all
similar.

We now verify
	\begin{flalign*}
\mathfrak{e}_{m j}^{ \pm}(u) \ell_{k l}^{\mp[n-m]}(v)=r^{-1} s^{-1} \frac{r u_{\mp}-s v_{ \pm}}{u_{\mp}-v_{ \pm}} \ell_{k j}^{\mp[n-m]}(v) \mathfrak{e}_{m l}^{ \pm}(u)-r^{-1} s^{-1} \frac{(r-s) u_{\mp}}{u_{\mp}-v_{ \pm}} \ell_{k j}^{\mp[n-m]}(v) \mathfrak{e}_{m j}^{\mp}(v),
	\end{flalign*}
When $n+1 \leq k \leq(m+1)^{\prime}$ and $E_{jj}\otimes E_{ll}$ appears in $\bar{R}_{r s}(u),$ the defining relations imply that
\begin{equation}
	\begin{aligned}\label{pro19.2}
\frac{u_{ \pm}-v_{\mp}}{r u_{ \pm}-s^{-1} v_{\mp}} \ell_{1 l}^{ \pm}(u) \ell_{k l}^{\mp}(v)+\frac{(r-s) u_{ \pm}}{r u_{ \pm}-s v_{\mp}} \ell_{k l}^{ \pm}(u) \ell_{1 l}^{\mp}(v)=\ell_{k l}^{\mp}(v) \ell_{1 l}^{ \pm}(u) .
	\end{aligned}
\end{equation}
	Since $\ell_{k l}^{\mp}(v)=\ell_{k l}^{\mp[n-1]}(v)+\mathfrak{f}_{k 1}^{\mp}(v) \mathfrak{h}_1^{\mp}(v) \mathfrak{e}_{1 l}^{\mp}(v)$, we can write the left-hand side of (\ref{pro19.2}) as
	\begin{flalign*}
	\frac{u_{ \pm}-v_{\mp}}{r u_{ \pm}-s v_{\mp}} \ell_{1 l}^{ \pm}(u) \ell_{k l}^{\mp[n-1]}(v)+\frac{u_{ \pm}-v_{\mp}}{r u_{ \pm}-s v_{\mp}} \ell_{1 l}^{ \pm}(u) \mathfrak{f}_{k 1}^{\mp}(v) \mathfrak{h}_1^{\mp}(v) \mathfrak{e}_{1 l}^{\mp}(v)+\frac{(r-s) u_{\mp}}{r u_{ \pm}-s v_{\mp}} \ell_{k l}^{ \pm}(u) \ell_{1 l}^{\mp}(v).
	\end{flalign*}
By the defining relations, we have
	\begin{flalign*}
\frac{u_{ \pm}-v_{\mp}}{r u_{ \pm}-s v_{\mp}} \ell_{1 l}^{ \pm}(u) \ell_{k 1}^{\mp}(v)+\frac{(r-s) u_{ \pm}}{r u_{ \pm}-s v_{\mp}} \ell_{k l}^{ \pm}(u) \ell_{11}^{\mp}(v)=rs\frac{u_{\mp}-v_{ \pm}}{r u_{\mp}-s v_{ \pm}} \ell_{k 1}^{\mp}(v) \ell_{1 l}^{ \pm}(u)+\frac{(r-s) u_{\mp}}{r u_{\mp}-s v_{ \pm}} \ell_{k l}^{\mp}(v) \ell_{11}^{ \pm}(u),
	\end{flalign*}
Hence, the left-hand side of (\ref{pro19.2}) equals to
	\begin{flalign*}
\frac{u_{ \pm}-v_{\mp}}{r u_{ \pm}-s^{-1} v_{\mp}} \ell_{1 l}^{ \pm}(u) \ell_{k l}^{\mp[n-1]}(v)+r s \frac{u_{\mp}-v_{ \pm}}{r u_{\mp}-s^{-1} v_{ \pm}} f_{k 1}^{\mp}(v) \ell_{11}^{\mp}(v) \ell_{1 l}^{ \pm}(u) \mathfrak{e}_{1 l}^{\mp}(v)
	\end{flalign*}
	\begin{flalign*}
+\frac{(r-s) u_{\mp}}{r u_{\mp}-s^{-1} v_{ \pm}} \ell_{k l}^{\mp}(v) \ell_{11}^{ \pm}(u) \mathfrak{e}_{1 l}^{\mp}(v) .
	\end{flalign*}
Furthermore, using the relation
	\begin{flalign*}
\ell_{1 j}^{ \pm}(u) \ell_{11}^{\mp}(v)=r s \frac{u_{\mp}-v_{ \pm}}{r u_{\mp}-s v_{ \pm}} \ell_{11}^{\mp}(v) \ell_{1 l}^{ \pm}(u)+\frac{(r-s) u_{\mp}}{r u_{\mp}-s v_{ \pm}} \ell_{1 l}^{\mp}(v) \ell_{11}^{ \pm}(u),
	\end{flalign*}
we reduce the left-hand side of (\ref{pro19.2}) to the following
	\begin{flalign*}
\frac{u_{ \pm}-v_{\mp}}{r u_{ \pm}-s v_{\mp}} \ell_{1 l}^{ \pm}(u) \ell_{k l}^{\mp[n-1]}(v)+\mathfrak{f}_{k 1}^{\mp}(v) \ell_{1 l}^{ \pm}(u) \ell_{1 l}^{\mp}(v)+\frac{(r-s) u_{\mp}}{r u_{\mp}-s v_{ \pm}} \ell_{k l}^{\mp[n-1]}(v) \ell_{11}^{ \pm}(u) \mathfrak{e}_{1 l}^{\mp}(v).
	\end{flalign*}
For $j=l$, we have
	\begin{flalign*}
\ell_{1 l}^{ \pm}(u) \ell_{1 l}^{\mp}(v)=\ell_{1 l}^{\mp}(v) \ell_{1 l}^{ \pm}(u),
	\end{flalign*}
together with  $\ell_{k l}^{\mp}(v)=\ell_{k l}^{\mp[n-1]}(v)+\mathfrak{f}_{k 1}^{\mp}(v) \mathfrak{h}_1^{\mp}(v) \mathfrak{e}_{1 l}^{\mp}(v)$, the right-hand side of (\ref{pro19.2})  becomes
	\begin{flalign*}
\ell_{k l}^{\mp[n-1]}(v) \ell_{1 l}^{ \pm}(u)+\mathfrak{f}_{k 1}^{\mp}(v) \mathfrak{h}_1^{\mp}(v) \mathfrak{e}_{1 l}^{\mp}(v) \ell_{1 l}^{ \pm}(u)
	\end{flalign*}
	\begin{flalign*}
=\ell_{k l}^{\mp[n-1]}(v) \ell_{1 l}^{ \pm}(u)+\mathfrak{f}_{k 1}^{\mp}(v) \ell_{1 l}^{ \pm}(u) \mathfrak{h}_1^{\mp}(v) \mathfrak{e}_{1 l}^{\mp}(v).
	\end{flalign*}
Comparing the both side of (\ref{pro19.2}), we have
	\begin{flalign*}
 \frac{u_{ \pm}-v_{\mp}}{r u_{ \pm}-s v_{\mp}} \ell_{1 l}^{ \pm}(u) \ell_{k l}^{\mp[n-1]}(v)+\frac{(r-s) u_{\mp}}{r u_{\mp}-s v_{ \pm}} \ell_{k l}^{\mp[n-1]}(v) \ell_{11}^{ \pm}(u) \mathfrak{e}_{1 l}^{\mp}(v)=\ell_{k l}^{\mp[n-1]}(v) \ell_{1 l}^{ \pm}(u).
	\end{flalign*}
Finally, the Corollary \ref{c:15} implies
	\begin{flalign*}
\frac{u_{ \pm}-v_{\mp}}{r u_{ \pm}-s v_{\mp}} \ell_{11}^{ \pm}(u) \ell_{k l}^{\mp[n-1]}(v)=r s \frac{u_{\mp}-v_{ \pm}}{r u_{\mp}-s v_{ \pm}} \ell_{k l}^{\mp[n-1]}(v) \ell_{11}^{ \pm}(u).
	\end{flalign*}
So we can get this relation. When $m+1 \leq k \leq n$ and $E_{jj}\otimes E_{ll}$ appears in $\bar{R}_{rs}(u)$, $m+1 \leq k \leq n$ and $E_{jj}\otimes E_{ll}$ appears in $\bar{R}_{11}(u)$, or $n+1 \leq k \leq(m+1)^{\prime}$ and $E_{jj}\otimes E_{ll}$ appears in $\bar{R}_{11}(u)$, the computations are
similar and are omitted. Therefore the proposition is proved.
%The rest of the relations can be obtained similarly.
		\end{proof}	
	
\end{proposition}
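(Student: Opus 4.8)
The plan is to reduce the entire list of relations to the case $m=1$. By Corollary~\ref{c:18} (applied with $m-1$ in place of $m$), the submatrix $\mathcal{L}^{\pm[n-m+1]}(u)$, formed by the rows and columns labelled $m,m+1,\ldots,m^{\prime}$, again satisfies $RLL$-relations, now of rank $n-m+1$; its first-row Gauss generators are precisely the $\mathfrak{e}_{mj}^{\pm}(u)$ and its once-lowered submatrix entries are precisely the $\ell_{kl}^{\pm[n-m]}(v)$. So the asserted relations are exactly the $m=1$ relations for this submatrix, and it suffices to prove the $m=1$ case, which I would carry out directly inside $U(\bar{R}^{[n]})$; the general case then follows by the homomorphism $\psi_m$ of Theorem~\ref{t:homo}, together with the identification $\ell_{ij}^{\pm[n-m]}(u)=\psi_m(\ell_{ij}^{\pm}(u))$, after the obvious shift of indices.

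For $m=1$ I would use two ingredients. The first is the Gauss decomposition, which for the first row and column reads $\mathfrak{h}_1^{\pm}(u)=\ell_{11}^{\pm}(u)$, $\ell_{1j}^{\pm}(u)=\mathfrak{h}_1^{\pm}(u)\,\mathfrak{e}_{1j}^{\pm}(u)$, together with the expansion
$$
\ell_{kl}^{\mp}(v)=\ell_{kl}^{\mp[n-1]}(v)+\mathfrak{f}_{k1}^{\mp}(v)\,\mathfrak{h}_1^{\mp}(v)\,\mathfrak{e}_{1l}^{\mp}(v).
$$
The second is the family of scalar relations extracted from the defining relations $\bar{R}(z/w)L_1^{\pm}(z)L_2^{\pm}(w)=L_2^{\pm}(w)L_1^{\pm}(z)\bar{R}(z/w)$ and $\bar{R}(z_{+}/w_{-})L_1^{+}(z)L_2^{-}(w)=L_2^{-}(w)L_1^{+}(z)\bar{R}(z_{-}/w_{+})$ by evaluating on basis vectors $v_i\otimes v_j$; the relevant ones couple $\ell_{1j}^{\pm}(u),\ell_{11}^{\pm}(u)$ on the one hand with $\ell_{k1}^{\mp}(v),\ell_{11}^{\mp}(v),\ell_{kl}^{\mp}(v)$ on the other. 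Substituting the Gauss expansion, replacing $\ell_{11}^{\pm}(u)$ by $\mathfrak{h}_1^{\pm}(u)$ and $\ell_{1l}^{\pm}(u)$ by $\mathfrak{h}_1^{\pm}(u)\mathfrak{e}_{1l}^{\pm}(u)$, and cancelling $\mathfrak{h}_1^{\pm}(u)$ from the left, one solves for the commutator of $\mathfrak{e}_{1j}^{\pm}(u)$ with $\ell_{kl}^{\mp[n-1]}(v)$; the same-superscript relations are obtained identically from $\bar{R}L_1^{\pm}L_2^{\pm}=L_2^{\pm}L_1^{\pm}\bar{R}$, with no spectral shift $z_{\pm}$ entering.

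The bookkeeping I would organize by three pieces of data. The position of $k$ ($2\le k\le n$ versus $n+1\le k\le 2^{\prime}$) decides whether the block $E_{jj}\otimes E_{ll}$ of the $R$-matrix that we use lies in $\bar{R}_{rs}(u)$ or in $\bar{R}_{11}(u)=(rs)^{-1}(\bar{R}_{rs}(u))^{op}$; this is the source of the extra factors of $rs$ and $(rs)^{-1}$ that are absent from the one-parameter treatment of \cite{JLM2}. The order between $j$ and $l$ ($j=l$, $j<l$, or $j>l$) selects which of the three spectral coefficients of $\bar{R}(u)$ appears, hence whether the outcome is an honest commutation relation, a shifted commutator, or a straightening identity. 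The position of $l$ ($2\le l\le n$ versus $n+1\le l\le 2^{\prime}$) again inserts or removes an $rs$. In every branch the decisive step is Corollary~\ref{c:15}: it records that $\ell_{11}^{\pm}(u)=\mathfrak{h}_1^{\pm}(u)$ commutes with $\ell_{kl}^{\mp[n-1]}(v)$ up to the displayed rational factor, possibly with one more $rs$ according to the position of $k$, and it is this that lets me move $\mathfrak{h}_1^{\pm}(u)$ across $\ell_{kl}^{\mp[n-1]}(v)$ and convert $\ell_{1l}^{\pm}(u)$ into $\mathfrak{e}_{1l}^{\pm}(u)$ on the right.

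I expect the main obstacle to be organizational rather than conceptual: there is no single hard idea, but the $R$-matrix for $\mathrm{C}_n^{(1)}$ has many more distinct diagonal and off-diagonal blocks than in type $A$, so each of the roughly twelve displayed identities needs its own extraction of the right scalar relation from $RLL$ and its own application of Corollary~\ref{c:15}; the real care lies in keeping the signs $\varepsilon_i$, the bar-indices $\bar{\imath}$, the $r\leftrightarrow s$ asymmetry, and especially the placement of the $rs$-factors consistent across all of them. The hypothesis $j\neq l^{\prime}$ precisely removes the block built from the operator $Q$, i.e.\ the third eigenvalue of $R$, which would require separate treatment and is not needed for these relations.
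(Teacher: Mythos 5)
Your proposal follows essentially the same route as the paper: reduce to $m=1$ via the homomorphism $\psi_m$ and the identification $\ell_{ij}^{\pm[n-m]}(u)=\psi_m(\ell_{ij}^{\pm}(u))$, extract the relevant scalar relations from the $RLL$ equations according to the block ($\bar{R}_{rs}$ versus $\bar{R}_{11}$, hence the $rs$-factors), substitute the Gauss expansion $\ell_{kl}^{\mp}(v)=\ell_{kl}^{\mp[n-1]}(v)+\mathfrak{f}_{k1}^{\mp}(v)\mathfrak{h}_1^{\mp}(v)\mathfrak{e}_{1l}^{\mp}(v)$, and finish with Corollary \ref{c:15} to move $\mathfrak{h}_1^{\pm}(u)$ across; your observation that the hypothesis $j\neq l^{\prime}$ excludes the $Q$-block is also consistent with the paper's treatment. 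The plan is correct and matches the paper's proof in all essentials.
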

By similar arguments we have derived the relations involving the generator series $f_{j i}^{ \pm}(u)$.

	\begin{proposition}	\label{p:20}
		Suppose that $m+1 \leqslant j, k, l \leqslant(m+1)^{\prime}$ and $j \neq k^{\prime}$.  Then, the following relations hold in $U\left(\bar{R}^{[n]}\right)$.
		
		If $j=k$ and $m+1 \leq k \leq n$, then
	\begin{flalign*}
		\mathfrak{f}_{j m}^{ \pm}(u) \ell_{j l}^{\mp[n-m]}(v)=r^{-1}s^{-1}\frac{u_{ \pm}-v_{\mp}}{r u_{ \pm}-s v_{\mp}} \ell_{j l}^{\mp[n-m]}(v) \mathfrak{f}_{j m}^{ \pm}(u)+r^{-1}s^{-1}\frac{\left(r-s\right) v_{\mp}}{r u_{ \pm}-s v_{\mp}} \mathfrak{f}_{j m}^{\mp}(v) \ell_{j l}^{\mp[n-m]}(v),
	\end{flalign*}
	\begin{flalign*}
\mathfrak{f}_{j m}^{ \pm}(u) \ell_{j l}^{ \pm[n-m]}(v)=r^{-1}s^{-1}\frac{u-v}{r u-s v} \ell_{j l}^{ \pm[n-m]}(u v) \mathfrak{f}_{j m}^{ \pm}(u)+r^{-1}s^{-1}\frac{\left(r-s\right) v}{r u-s v} \mathfrak{f}_{j m}^{ \pm}(v) \ell_{j l}^{ \pm[n-m]}(v) ;
	\end{flalign*}
If j = l and $n+1 \leq k \leq(m+1)^{\prime}$, then
	\begin{flalign*}
	\mathfrak{f}_{j m}^{ \pm}(u) \ell_{j l}^{\mp[n-m]}(v)=\frac{u_{ \pm}-v_{\mp}}{r u_{ \pm}-s v_{\mp}} \ell_{j l}^{\mp[n-m]}(v) \mathfrak{f}_{j m}^{ \pm}(u)+\frac{\left(r-s\right) v_{\mp}}{r u_{ \pm}-s v_{\mp}} \mathfrak{f}_{j m}^{\mp}(v) \ell_{j l}^{\mp[n-m]}(v),
\end{flalign*}
\begin{flalign*}
	\mathfrak{f}_{j m}^{ \pm}(u) \ell_{j l}^{ \pm[n-m]}(v)=\frac{u-v}{r u-s v} \ell_{j l}^{ \pm[n-m]}(u v) \mathfrak{f}_{j m}^{ \pm}(u)+\frac{\left(r-s\right) v}{r u-s v} \mathfrak{f}_{j m}^{ \pm}(v) \ell_{j l}^{ \pm[n-m]}(v) ;
\end{flalign*}
		If $j<l$ and $m+1 \leq k \leq n$, then
\begin{flalign*}
		\left[\mathfrak{f}_{j m}^{ \pm}(u), \ell_{k l}^{\mp[n-m]}(v)\right]=r^{-1}s^{-1}\frac{\left(r-s\right) v_{\mp}}{u_{ \pm}-v_{\mp}} \mathfrak{f}_{k m}^{\mp}(v) \ell_{j l}^{\mp[n-m]}(v)-r^{-1}s^{-1}\frac{\left(r-s\right) u_{ \pm}}{u_{ \pm}-v_{\mp}} \mathfrak{f}_{k m}^{ \pm}(u) \ell_{j l}^{\mp[n-m]}(v),
\end{flalign*}
\begin{flalign*}
\left[\mathfrak{f}_{j m}^{ \pm}(u), \ell_{k l}^{ \pm[n-m]}(v)\right]=r^{-1}s^{-1}\frac{\left(r-s\right) v}{u-v} \mathfrak{f}_{k m}^{ \pm}(v) \ell_{j l}^{ \pm[n-m]}(v)-r^{-1}s^{-1}\frac{\left(r-s\right) u}{u-v} \mathfrak{f}_{k m}^{ \pm}(u) \ell_{j l}^{ \pm[n-m]}(v) ;
\end{flalign*}
If j < l and $n+1 \leq k \leq(m+1)^{\prime}$, then
\begin{flalign*}
	\left[\mathfrak{f}_{j m}^{ \pm}(u), \ell_{k l}^{\mp[n-m]}(v)\right]=\frac{\left(r-s\right) v_{\mp}}{u_{ \pm}-v_{\mp}} \mathfrak{f}_{k m}^{\mp}(v) \ell_{j l}^{\mp[n-m]}(v)-\frac{\left(r-s\right) u_{ \pm}}{u_{ \pm}-v_{\mp}} \mathfrak{f}_{k m}^{ \pm}(u) \ell_{j l}^{\mp[n-m]}(v),
\end{flalign*}
\begin{flalign*}
	\left[\mathfrak{f}_{j m}^{ \pm}(u), \ell_{k l}^{ \pm[n-m]}(v)\right]=\frac{\left(r-s\right) v}{u-v} \mathfrak{f}_{k m}^{ \pm}(v) \ell_{j l}^{ \pm[n-m]}(v)-\frac{\left(r-s\right) u}{u-v} \mathfrak{f}_{k m}^{ \pm}(u) \ell_{j l}^{ \pm[n-m]}(v) ;
\end{flalign*}
		If $j>l$ and $m+1 \leq k \leq n$, then
		$$
		\left[\mathfrak{f}_{j m}^{ \pm}(u), \ell_{k l}^{\mp[n-m]}(v)\right]=r^{-1}s^{-1}\frac{\left(r-s\right) v_{\mp}}{u_{ \pm}-v_{\mp}}\left(\mathfrak{f}_{k m}^{\mp}(v)-\mathfrak{f}_{k m}^{ \pm}(u)\right) \ell_{j l}^{\mp[n-m]}(v),
		$$
		$$
		\left[\mathfrak{f}_{j m}^{ \pm}(u), \ell_{k l}^{ \pm[n-m]}(v)\right]=r^{-1}s^{-1}\frac{\left(r-s\right) v}{u-v}\left(\mathfrak{f}_{k m}^{ \pm}(v)-\mathfrak{f}_{k m}^{ \pm}(u)\right) \ell_{j l}^{ \pm[n-m]}(v) ;
		$$
		If j < l and $n+1 \leq k \leq(m+1)^{\prime}$, then
		$$
		\left[\mathfrak{f}_{j m}^{ \pm}(u), \ell_{k l}^{\mp[n-m]}(v)\right]=\frac{\left(r-s\right) v_{\mp}}{u_{ \pm}-v_{\mp}}\left(\mathfrak{f}_{k m}^{\mp}(v)-\mathfrak{f}_{k m}^{ \pm}(u)\right) \ell_{j l}^{\mp[n-m]}(v),
		$$
		$$
		\left[\mathfrak{f}_{j m}^{ \pm}(u), \ell_{k l}^{ \pm[n-m]}(v)\right]=\frac{\left(r-s\right) v}{u-v}\left(\mathfrak{f}_{k m}^{ \pm}(v)-\mathfrak{f}_{k m}^{ \pm}(u)\right) \ell_{j l}^{ \pm[n-m]}(v) .
		$$
	\end{proposition}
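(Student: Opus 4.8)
The plan is to follow the proof of the preceding proposition, interchanging the roles of the upper- and lower-triangular Gauss factors. As there, it suffices to treat the case $m=1$: once the six families of identities relating $\mathfrak{f}_{j1}^{\pm}(u)$ to the series $\ell_{kl}^{\pm[n-1]}(v)=\psi_1(\ell_{kl}^{\pm}(v))$ are established in $U(\bar{R}^{[n]})$, the general case follows by applying the homomorphism $\psi_m$ of Theorem \ref{t:homo}, since $\psi_m$ carries $\mathfrak{f}_{j1}^{\pm}$ to $\mathfrak{f}_{jm}^{\pm}$ and $\ell_{kl}^{\pm[n-1]}$ to $\ell_{kl}^{\pm[n-m]}$ (using Corollary \ref{c:18}).

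For $m=1$ the ingredients are: the first-step Gauss decomposition $\ell_{kl}^{\mp}(v)=\ell_{kl}^{\mp[n-1]}(v)+\mathfrak{f}_{k1}^{\mp}(v)\,\mathfrak{h}_1^{\mp}(v)\,\mathfrak{e}_{1l}^{\mp}(v)$, the identity $\ell_{j1}^{\pm}(u)=\mathfrak{f}_{j1}^{\pm}(u)\,\mathfrak{h}_1^{\pm}(u)$ (the first column of $E^{\pm}$ is trivial), and the defining relations $\bar{R}(z/w)L_1^{\pm}(z)L_2^{\pm}(w)=L_2^{\pm}(w)L_1^{\pm}(z)\bar{R}(z/w)$ and $\bar{R}(z_+/w_-)L_1^{+}(z)L_2^{-}(w)=L_2^{-}(w)L_1^{+}(z)\bar{R}(z_-/w_+)$. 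I would select from these the matrix components coupling the $(j,1)$ and $(k,l)$ positions; which block of $\bar{R}$ contributes (the part whose prefactor involves $rs$, or the one with prefactor $(rs)^{-1}$) depends on whether $l$ satisfies $m+1\le l\le n$ or $n+1\le l\le(m+1)'$ and on the comparison of $j$ with $l$, and this is exactly what produces the six cases. Substituting the above expression for $\ell_{kl}^{\mp}(v)$, moving $\ell_{11}^{\pm}$ and $\mathfrak{h}_1^{\pm}$ past the remaining factors by relations already proved and by Corollary \ref{c:15}, and then cancelling the common right-hand factor $\mathfrak{h}_1^{\mp}(v)\,\mathfrak{e}_{1l}^{\mp}(v)$, isolates the stated identity. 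The computation runs parallel to the $\mathfrak{e}$-case, so I would write out only one representative subcase and omit the rest.

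An alternative, almost computation-free route is to apply to the previous proposition the anti-automorphism of $U(\bar{R})$ coming from matrix transposition, $L^{\pm}(u)\mapsto(L^{\mp}(u))^{t}$ together with $r\leftrightarrow s$ --- the affine counterpart of the symmetry $\ell_{ij}^{+}\mapsto\ell_{ji}^{-}$ used in the proof of Theorem \ref{T:C3}, which is compatible with the symmetry properties of $\bar{R}(u)$. Under transposition the Gauss decomposition $L=FHE$ becomes $L^{t}=E^{t}H^{t}F^{t}$, so $\mathfrak{e}_{ij}^{\pm}$ and $\mathfrak{f}_{ji}^{\pm}$ are exchanged; applying it to each identity of the $\mathfrak{e}$-proposition and reversing the order of products yields precisely the identities listed here. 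In either approach the only real work is the bookkeeping: one must track the rational prefactors $\tfrac{u-v}{ru-sv}$, $\tfrac{(r-s)u}{u-v}$, $\tfrac{(r-s)v}{u-v}$, the extra scalar $r^{-1}s^{-1}$ that appears precisely when $m+1\le l\le n$, and the shifted spectral parameters $u_{\pm},v_{\pm}$ entering the mixed $L^{+}$--$L^{-}$ relations; no new idea is needed beyond what already appeared for the $\mathfrak{e}$-generators.
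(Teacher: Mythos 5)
Your primary route is exactly the paper's: its entire proof of this proposition is the sentence ``By similar arguments we have derived the relations involving the generator series $\mathfrak{f}_{ji}^{\pm}(u)$,'' i.e.\ rerun the $m=1$ computation of the preceding proposition with $\ell_{j1}^{\pm}(u)=\mathfrak{f}_{j1}^{\pm}(u)\,\mathfrak{h}_1^{\pm}(u)$ in place of $\ell_{1j}^{\pm}(u)=\mathfrak{h}_1^{\pm}(u)\,\mathfrak{e}_{1j}^{\pm}(u)$, substitute the first-step Gauss decomposition, and push the result up with $\psi_m$. One bookkeeping correction: in the $\mathfrak{f}$-version the extra scalar $r^{-1}s^{-1}$ and the relevant block of $\bar{R}$ are governed by the range of the row index $k$ (as in the statement), not of $l$ as you wrote.
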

\subsection{Relations coming from Type $A$ quantum affine $R$ matrix}	
Set
$$
\mathcal{L}^{A \pm}(u)=\sum_{i, j=1}^n e_{i j} \otimes \ell_{i j}^{ \pm}(u).
$$
 Consider the $R$-matrix of two parameter quantum affine algebra $U_{r, s}(\widehat{\mathfrak{g l}}_n)$ \cite{JL2}

 $$
 \begin{aligned}
 	R(u) & =\sum_{i=1}^n E_{i i} \otimes E_{i i}+\frac{(1-u) r}{1-u r s^{-1}} \sum_{i>j} E_{i i} \otimes E_{j j}+\frac{(1-u) s^{-1}}{1-u r s^{-1}} \sum_{i<j} E_{i i} \otimes E_{j j} \\
 	& +\frac{1-r s^{-1}}{1-u r s^{-1}} \sum_{i>j} E_{i j} \otimes E_{j i}+\frac{\left(1-r s^{-1}\right) u}{1-r s^{-1} u} \sum_{i<j} E_{i j} \otimes E_{j i} ,
 \end{aligned}
 $$
 By comparing it with the $R$-matrix of $U_{r, s}(\mathrm{C}_n^{(1)})$, the summand containing indices $1, \ldots, n$ is
 $$
 \begin{aligned}
 	R'(u) & =\sum_{i=1}^n E_{i i} \otimes E_{i i}+\frac{(1-u) s^{-1}}{1-u rs^{-1}} \sum_{i>j} E_{i i} \otimes E_{j j}+\frac{(1-u) r}{1-u r s^{-1}} \sum_{i<j} E_{i i} \otimes E_{j j} \\
 	& +\frac{1-r s^{-1}}{1-u r s^{-1}} \sum_{i>j} E_{i j} \otimes E_{j i}+\frac{\left(1-r s^{-1}\right) u}{1-r s^{-1} u} \sum_{i<j} E_{i j} \otimes E_{j i} .
 \end{aligned}
 $$

 Note that under $r\mapsto s^{-1}$ and $s^{-1}\mapsto r$, the two $R$-matrices coincide. Therefore
there are type $A$ relations in the algebra $U\left(\bar{R}^{[n]}\right)$
 $$
 \begin{aligned}
 	R_A(u / v) \mathcal{L}_1^{A \pm}(u) \mathcal{L}_2^{A \pm}(v) & =\mathcal{L}_2^{A \pm}(v) \mathcal{L}_1^{A \pm}(u) R_A(u / v), \\
 	R_A\left(u_+/ v_-\right) \mathcal{L}_1^{A+}(u) \mathcal{L}_2^{A-}(v) & =\mathcal{L}_2^{A-}(v) \mathcal{L}_1^{A+}(u) R_A\left(u_-/ v_+\right) .
 \end{aligned}
 $$
 Hence, we get the following relations for the Gaussian generators $\mathfrak{h}_i^{ \pm}(u)$ with $i=1, \ldots, n$ and $\mathcal{X}_i^{ \pm}(u)$ with $i=1, \ldots, n-1$ by using \cite{JL2}.
 	\begin{proposition}	
 In the algebra $U\left(\bar{R}^{[n]}\right)$, we have
 \begin{align*}
& \mathfrak{h}_i^{ \pm}(u) \mathfrak{h}_j^{ \pm}(v)=\mathfrak{h}_j^{ \pm}(v) \mathfrak{h}_i^{ \pm}(u), \quad \mathfrak{h}_i^{ \pm}(u) \mathfrak{h}_i^{\mp}(v)=\mathfrak{h}_i^{\mp}(v) \mathfrak{h}_i^{ \pm}(u), \\
& \frac{u_{ \pm}-v_{\mp}}{s^{-1} u_{ \pm}-r^{-1} v_{\mp}} \mathfrak{h}_i^{ \pm}(u) \mathfrak{h}_j^{\mp}(v)=\frac{u_{\mp}-v_{ \pm}}{s^{-1}u_{\mp}-r^{-1} v_{ \pm}} \mathfrak{h}_j^{\mp}(v) \mathfrak{h}_i^{ \pm}(u) \quad \text { for } \quad i<j\\
& \mathfrak{h}_i^{ \pm}(u) \mathcal{X}_j^{+}(v)=\frac{u_{\mp}-v}{s^{-\left(\epsilon_i, \alpha_j\right)} u_{\mp}-r^{-\left(\epsilon_i, \alpha_j\right)} v} \mathcal{X}_j^{+}(v) \mathfrak{h}_i^{ \pm}(u),\\
& \mathfrak{h}_i^{ \pm}(u) \mathcal{X}_j^{-}(v)=\frac{s^{-\left(\epsilon_i, \alpha_j\right)} u_{ \pm}-r^{-\left(\epsilon_i, \alpha_j\right)} v}{u_{ \pm}-v} \mathcal{X}_j^{-}(v) \mathfrak{h}_i^{ \pm}(u), \\
& (u-s^{\mp(\alpha_i, \alpha_j)} v) \mathcal{X}_i^{ \pm}(u(r s^{-1})^{\frac{i}{2}}) \mathcal{X}_j^{ \pm}(v(r s^{-1})^{\frac{j}{2}})
 =(s^{\mp(\alpha_i, \alpha_j)} u-v) \mathcal{X}_j^{ \pm}(v(r s^{-1})^{\frac{j}{2}}) \mathcal{X}_i^{ \pm}(u(r s^{-1})^{\frac{i}{2}}), \\
& \left[\mathcal{X}_i^{+}(u), \mathcal{X}_j^{-}(v)\right]\\
&=\delta_{i j}\left(s^{-1}-r^{-1}\right)\left(\delta\left(u_{-} / v_{+}\right) \mathfrak{h}_i^{-}\left(v_{+}\right)^{-1} \mathfrak{h}_{i+1}^{-}\left(v_{+}\right)-\delta\left(u_{+} / v_{-}\right) \mathfrak{h}_i^{+}\left(u_{+}\right)^{-1} \mathfrak{h}_{i+1}^{+}\left(u_{+}\right)\right).
 \end{align*}
 together with the Serre relations for the series $\mathcal{X}_i^{ \pm}(u)$.
  	\end{proposition}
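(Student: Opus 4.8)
The plan is to deduce every relation in the statement from the corresponding relations for $U_{r,s}(\widehat{\mathfrak{gl}}_n)$ proved in \cite{JL2}, exploiting the type $A$ block structure recorded just above. First I would observe that the $n\times n$ principal submatrix $\mathcal{L}^{A\pm}(u)=\sum_{i,j=1}^{n}e_{ij}\otimes\ell_{ij}^{\pm}(u)$ of $L^{\pm}(u)$ satisfies the $RLL$ relations governed by $R_A(u/v)$: the restriction of $\bar R^{[n]}(u)$ to the indices $1,\dots,n$ is exactly the matrix $R'(u)$, which is carried to $R_A(u)$ by the (involutive) substitution $r\mapsto s^{-1},\ s\mapsto r^{-1}$, under which $rs^{-1}$ is invariant. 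Consequently the subalgebra of $U(\bar R^{[n]})$ generated by $r^{\pm c/2},s^{\pm c/2}$ and the $\ell_{ij}^{\pm}(u)$ with $1\le i,j\le n$ is a homomorphic image of the $RLL$ algebra of $U_{r,s}(\widehat{\mathfrak{gl}}_n)$, with the parameters and the spectral shifts transformed in this way.

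Next I would apply the type $A$ Gauss decomposition theorem of \cite{JL2}. Since the Gauss decomposition of $L^{\pm}(u)$, when restricted to its first $n$ rows and columns, is precisely the Gauss decomposition of $\mathcal{L}^{A\pm}(u)$, the Gaussian generators $\mathfrak{h}_1^{\pm}(u),\dots,\mathfrak{h}_n^{\pm}(u)$ and the currents $\mathcal{X}_1^{\pm}(u),\dots,\mathcal{X}_{n-1}^{\pm}(u)$ occurring in the statement coincide with the corresponding type $A$ objects. Hence the relations among them are exactly the images, under $r\mapsto s^{-1},\ s\mapsto r^{-1}$, of the Drinfeld-type relations of \cite{JL2}; verifying this amounts to a mechanical substitution in each coefficient, e.g.\ $\frac{(1-u)r}{1-urs^{-1}}\mapsto\frac{(1-u)s^{-1}}{1-urs^{-1}}$, the exchange factor $\frac{u_\mp-v_\pm}{r u_\mp-s v_\pm}\mapsto\frac{u_\mp-v_\pm}{s^{-1}u_\mp-r^{-1}v_\pm}$ in the $\mathfrak{h}_i$ cross relations, the $\mathfrak{h}_i$–$\mathcal{X}_j$ factors acquiring the exponents $(\epsilon_i,\alpha_j)$ as displayed, the shifted arguments $(rs^{-1})^{i/2}$ in the quadratic $\mathcal{X}$–$\mathcal{X}$ relations, and the $\delta$-function form of $[\mathcal{X}_i^+,\mathcal{X}_j^-]$. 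The Serre relations for the $\mathcal{X}_i^{\pm}(u)$ are obtained from those of \cite{JL2} by the same substitution.

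The one point that requires genuine (if routine) care is the bookkeeping of this substitution across all relations simultaneously, in particular its interaction with the central element $c$ and the spectral shifts $z_\pm=z r^{\pm c/2},\, z s^{\pm c/2}$: since $r\leftrightarrow s^{-1}$ essentially interchanges the roles of the $r$-shift and the $s$-shift, one must check that this is consistent with the mixed relation $\bar R(z_+/w_-)L_1^{+}(z)L_2^{-}(w)=L_2^{-}(w)L_1^{+}(z)\bar R(z_-/w_+)$ used to derive all the $+$/$-$ cross relations, so that, for instance, the factors $\delta(u_\mp/v_\pm)$ and the normalizations $\mathfrak{h}_i^{\mp}(v_\pm)^{-1}\mathfrak{h}_{i+1}^{\mp}(v_\pm)$ in the $[\mathcal{X}_i^+,\mathcal{X}_j^-]$ relation come out exactly as stated. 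I would handle this by first writing out explicitly the dictionary between the present data $(r,s,c,z_\pm,\ \ell_{ij}^{\pm},\ 1\le i,j\le n)$ and the type $A$ data of \cite{JL2}, and then reading off each relation directly. No new representation-theoretic or combinatorial input beyond \cite{JL2} is needed, which is precisely why the proof reduces to that reference.
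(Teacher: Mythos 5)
Your proposal coincides with the paper's own argument: the paper likewise identifies the upper-left $n\times n$ block of $\bar R^{[n]}(u)$ with the $R$-matrix of $U_{r,s}(\widehat{\mathfrak{gl}}_n)$ via the substitution $r\mapsto s^{-1}$, $s\mapsto r^{-1}$ (under which $rs^{-1}$ is fixed), concludes that $\mathcal{L}^{A\pm}(u)$ satisfies the type $A$ $RLL$ relations, and then imports the Drinfeld-type relations for the Gaussian generators directly from \cite{JL2}. The paper gives no more detail than this, so your plan --- which in addition makes explicit the compatibility of the restricted Gauss decomposition and the bookkeeping of the spectral shifts $z_\pm$ --- is essentially the same proof, spelled out slightly more carefully.
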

  		\begin{remark}\label{r:22}	
 Consider the inverse matrices $\mathcal{L}^{ \pm}(u)^{-1}=\left[\ell_{i j}^{ \pm}(u)^{\prime}\right]_{i, j=1, \ldots, 2 n}$, we have
 $$
 \mathcal{L}_1^{ \pm}(u)^{-1} \mathcal{L}_2^{ \pm}(v)^{-1} \bar{R}^{[n]}(u / v)=\bar{R}^{[n]}(u / v) \mathcal{L}_2^{ \pm}(v)^{-1} \mathcal{L}_1^{ \pm}(u)^{-1},
 $$
 $$
 \mathcal{L}_2^{-}(v)^{-1} \mathcal{L}_1^{+}(u)^{-1} \bar{R}^{[n]}\left(u_{+} / v_{-}\right)=\bar{R}^{[n]}\left(u_{-} / v_{+}\right) \mathcal{L}_2^{-}(v)^{-1} \mathcal{L}_1^{+}(u)^{-1} .
 $$
 Hence, the coefficients of the series $\ell^{\pm}_{ij}(u)'$ belong to the algebra $U\left(\bar{R}^{[n]}\right)$
 and obey the defining relations of $U_{r, s}(\widehat{\mathfrak{g l}}_n)$, where $i, j=n^{\prime}, \ldots, 1^{\prime}$.  In particular, the Gauss
 decomposition for the matrix $\left[\ell_{i j}^{ \pm}(u)^{\prime}\right]_{i, j=n^{\prime}, \ldots, 1^{\prime}}$ comes from that of
 $\mathcal{L}^{ \pm}(u)$ by taking inverse.
		  	\end{remark}
When $n=1$, we have
$$
\bar{R}^{[1]}(u)=\sum_{i=n}^{n+1} e_{i i} \otimes e_{i i}+\frac{u-1}{r s^{-1} u-r^{-1} s} \sum_{i \neq j} e_{i i} \otimes e_{j j}
$$
$$
	 +\frac{\left(r s^{-1}-r^{-1} s\right) u}{r s^{-1} u-r^{-1} s} e_{n, n+1} \otimes e_{n+1, n}+\frac{r s^{-1}-r^{-1} s}{r s^{-1} u-r^{-1} s} e_{n+1, n} \otimes e_{n, n+1}.
$$
and so it coincides with the $R$-matrix associated with $U_{r s^{-1}}(\widehat{\mathfrak{g l}}_n)$.  Therefore, we have following proposition.
 	\begin{proposition}\label{p:23}	
 		 The following relations hold in the algebra $U\left(\bar{R}^{[n]}\right)$:
 		 $$
 		 \begin{aligned}
 		 	\mathfrak{h}_i^{ \pm}(u) \mathfrak{h}_j^{ \pm}(v) & =\mathfrak{h}_j^{ \pm}(v) \mathfrak{h}_i^{ \pm}(u), & & i, j=n, n+1, \\
 		 	\mathfrak{h}_i^{ \pm}(u) \mathfrak{h}_i^{\mp}(v) & =\mathfrak{h}_i^{\mp}(v) \mathfrak{h}_i^{ \pm}(u), & & i=n, n+1,
 		 \end{aligned}
 		 $$
 		 $$
 		 \frac{u_{ \pm}-v_{\mp}}{r s^{-1} u_{ \pm}-r^{-1} s v_{\mp}} \mathfrak{h}_n^{ \pm}(u) \mathfrak{h}_{n+1}^{\mp}(v)=\frac{u_{\mp}-v_{ \pm}}{r s^{-1} u_{\mp}-r^{-1} s v_{ \pm}} \mathfrak{h}_{n+1}^{\mp}(v) \mathfrak{h}_n^{ \pm}(u),
 		 $$
 		 $$
 		 \mathfrak{h}_n^{ \pm}(u) \mathcal{X}_n^{+}(v)=\frac{u_{\mp}-v}{r s^{-1} u_{\mp}-r^{-1} s v} \mathcal{X}_n^{+}(v) \mathfrak{h}_n^{ \pm}(u),
 		 $$
 		 $$
 		 \mathfrak{h}_{n+1}^{ \pm}(u) \mathcal{X}_n^{+}(v)=\frac{u_{\mp}-v}{r^{-1} s u_{\mp}-r s^{-1} v} \mathcal{X}_n^{+}(v) \mathfrak{h}_{n+1}^{ \pm}(u),
 		 $$
 		 $$
 		 \mathfrak{h}_n^{ \pm}(u) \mathcal{X}_n^{-}(v)=\frac{r s^{-1} u_{ \pm}-r^{-1} s v}{u_{ \pm}-v} \mathcal{X}_n^{-}(v) \mathfrak{h}_n^{ \pm}(u),
 		 $$
 		 $$
 		 \mathfrak{h}_{n+1}^{ \pm}(u) \mathcal{X}_n^{-}(v)=\frac{r^{-1} s u_{ \pm}-r s^{-1} v}{u_{ \pm}-v} \mathcal{X}_n^{-}(v) \mathfrak{h}_{n+1}^{ \pm}(u),
 		 $$
 		 $$
 		 (u-(r s^{-1})^{\frac{ \pm(\alpha_n, \alpha_n)}{2}} v) \mathcal{X}_n^{ \pm}(u) \mathcal{X}_n^{ \pm}(v)=((r s^{-1})^{\frac{ \pm(\alpha_n, \alpha_n)}{2}} u-v) \mathcal{X}_n^{ \pm}(v) \mathcal{X}_n^{ \pm}(u),
 		 $$
 		 $$
 		 \left[\mathcal{X}_n^{+}(u), \mathcal{X}_n^{-}(v)\right]=\left(r s^{-1}-r^{-1} s\right)\left(\delta\left(\frac{u_{-}}{v_{+}}\right) \mathfrak{h}_n^{-}\left(v_{+}\right)^{-1} \mathfrak{h}_{n+1}^{-}\left(v_{+}\right)-\delta\left(\frac{u_{+}}{v_{-}}\right) \mathfrak{h}_n^{+}\left(u_{+}\right)^{-1} \mathfrak{h}_{n+1}^{+}\left(u_{+}\right)\right).
 		 $$
 			\end{proposition}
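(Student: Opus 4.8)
The strategy is to deduce every relation from the rank-one ($\widehat{\mathfrak{gl}}_2$) case, in the same spirit as the corresponding step in \cite{JLM2} (with \cite{JL2} supplying the two-parameter type-$A$ input). Specialize Theorem~\ref{t:homo} to $m=n-1$: this gives a homomorphism $\psi_{n-1}\colon U(\bar{R}^{[1]})\to U(\bar{R}^{[n]})$ carrying the generating matrix of $U(\bar{R}^{[1]})$ to the $2\times 2$ submatrix $\mathcal{L}^{\pm[1]}(u)$ in rows and columns $n,n+1$, and by Corollary~\ref{c:18} this submatrix obeys the $RLL$ relations governed by $\bar{R}^{[1]}(u/v)$ and its $L^{+}L^{-}$ companion. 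Next I would note that, from the displayed formula, every entry of $\bar{R}^{[1]}(u)$ depends on $r,s$ only through $p:=rs^{-1}$, so $\bar{R}^{[1]}(u)$ is, up to an overall scalar, the $R$-matrix of the one-parameter quantum affine algebra $U_{p}(\widehat{\mathfrak{gl}}_2)$; equivalently it is the type-$A$ $R$-matrix of \cite{JL2} specialized so that its two parameters become $rs^{-1}$ and $r^{-1}s$.

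Granting this identification, the Ding--Frenkel isomorphism of \cite{DF} (or its two-parameter form in \cite{JL2}) applies verbatim to $U(\bar{R}^{[1]})$: the Gauss decomposition of its generating matrix yields series $\mathfrak{h}_1^{\pm},\mathfrak{h}_2^{\pm},\mathfrak{e}_{12}^{\pm},\mathfrak{f}_{21}^{\pm}$ satisfying exactly the single-node Drinfeld-type relations of Proposition~\ref{p:23}, with $r,s$ replaced everywhere by $p$ and $p^{-1}$ and the central shifts by $r^{\pm c/2},s^{\pm c/2}$. I would then match this with the global data: by the $m=n-1$ instance of the remark recording $\mathcal{F}^{\pm[1]},\mathcal{H}^{\pm[1]},\mathcal{E}^{\pm[1]}$, together with the proposition identifying $\ell_{ij}^{\pm[n-m]}(u)$ with $\psi_m(\ell_{ij}^{\pm}(u))$, the Gauss factors of $\mathcal{L}^{\pm[1]}(u)$ are precisely $\operatorname{diag}(\mathfrak{h}_n^{\pm}(u),\mathfrak{h}_{n+1}^{\pm}(u))$, $\mathfrak{e}_{n,n+1}^{\pm}(u)$ and $\mathfrak{f}_{n+1,n}^{\pm}(u)$, i.e.\ the global Gaussian generators with indices $n,n+1$; invertibility of the diagonal factors, which is what makes the decomposition unique and compatible with $\psi_{n-1}$, is built into the presentation of $U(\bar{R}^{[n]})$. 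Since $\mathcal{X}_n^{\pm}(u)$ is by definition assembled from $\mathfrak{e}_{n,n+1}^{\pm}$, $\mathfrak{f}_{n+1,n}^{\pm}$, $\mathfrak{h}_n^{\pm}$ and $\mathfrak{h}_{n+1}^{\pm}$ in the same way as $\mathcal{X}_i^{\pm}(u)$ for $i<n$, applying $\psi_{n-1}$ to the rank-one relations turns them into the identities asserted in $U(\bar{R}^{[n]})$; no Serre relation appears, a single node carrying none.

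This argument has no conceptual obstacle; the point that must be handled carefully is purely bookkeeping: following the specialization sending $r,s$ to $rs^{-1},r^{-1}s$, together with the spectral shifts $u_{+}=ur^{c/2}$ and $u_{-}=us^{c/2}$, through the Ding--Frenkel formulas, and reconciling the normalization of $\bar{R}^{[1]}(u)$ with the conventions of \cite{JL2} and \cite{DF}, so that coefficients such as $\frac{u_{\pm}-v_{\mp}}{rs^{-1}u_{\pm}-r^{-1}sv_{\mp}}$ and $\bigl(u-(rs^{-1})^{\pm(\alpha_n,\alpha_n)/2}v\bigr)$ come out exactly as in the statement. I would also verify directly, as is implicitly done in the text when $\bar{R}^{[1]}(u)$ is written out, that its diagonal and off-diagonal entries do collapse to functions of $p$ alone, since this is what licenses the reduction to the $\widehat{\mathfrak{gl}}_2$ $R$-matrix.
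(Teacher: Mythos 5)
Your proposal is correct and takes essentially the same route as the paper: the text's entire justification for this proposition is the observation that $\bar{R}^{[1]}(u)$ depends on $r,s$ only through $rs^{-1}$ and hence coincides with the $R$-matrix of the one-parameter algebra $U_{rs^{-1}}(\widehat{\mathfrak{gl}}_2)$, after which the rank-one type-$A$ relations of \cite{DF, JL2} are imported through the homomorphism $\psi_{n-1}$ and Corollary~\ref{c:18}. Your write-up merely makes explicit the bookkeeping (identification of the Gauss factors of $\mathcal{L}^{\pm[1]}(u)$ with $\mathfrak{h}_n^{\pm},\mathfrak{h}_{n+1}^{\pm},\mathfrak{e}_{n,n+1}^{\pm},\mathfrak{f}_{n+1,n}^{\pm}$ and the substitution $r\mapsto rs^{-1}$, $s\mapsto r^{-1}s$) that the paper leaves implicit.
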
	
		\section{ Drinfeld-type relations in the algebras $\boldsymbol{U}\left(\bar{R}^{[n]}\right)$}
			\begin{theorem}
				The following relations between the Gaussian generators hold in the algebra $\boldsymbol{U}\left(\bar{R}^{[n]}\right)$.
				
				For the relations involving $\mathfrak{h}_i^{ \pm}(u),$  we	have $$
				\mathfrak{h}_{i, 0}^{+} \mathfrak{h}_{i, 0}^{-}=\mathfrak{h}_{i, 0}^{-} \mathfrak{h}_{i, 0}^{+}=1 \text {, and } \mathfrak{h}_{n, 0}^{+} \mathfrak{h}_{n+1,0}^{+}=1 \text {, }
				$$
$$\mathfrak{h}_i^{ \pm}(u) \mathfrak{h}_j^{ \pm}(v)=\mathfrak{h}_j^{ \pm}(v) \mathfrak{h}_i^{ \pm}(u),$$
$$
\mathfrak{h}_i^{ \pm}(u) \mathfrak{h}_i^{\mp}(v)=\mathfrak{h}_i^{\mp}(v) \mathfrak{h}_i^{ \pm}(u),
$$
		$$
		\frac{u_{ \pm}-v_{\mp}}{r u_{ \pm}-s v_{\mp}} \mathfrak{h}_i^{ \pm}(u) \mathfrak{h}_j^{\mp}(v)=\frac{u_{\mp}-v_{ \pm}}{r u_{\mp}-s v_{ \pm}} \mathfrak{h}_j^{\mp}(v) \mathfrak{h}_i^{ \pm}(u),
		$$
		for $i<j$ and $i \neq n$, and 	
$$
\frac{u_{ \pm}-v_{\mp}}{r s^{-1} u_{ \pm}-r^{-1} s v_{\mp}} \mathfrak{h}_n^{ \pm}(u) \mathfrak{h}_{n+1}^{\mp}(v)=\frac{u_{\mp}-v_{ \pm}}{r s^{-1} u_{\mp}-r^{-1} s v_{ \pm}} \mathfrak{h}_{n+1}^{\mp}(v) \mathfrak{h}_n^{ \pm}(u) .
$$
The relations involving $\mathfrak{h}_i^{ \pm}(u)$ and $\mathcal{X}_j^{ \pm}(v)$ are
 $$
\mathfrak{h}_i^{ \pm}(u) \mathcal{X}_j^{+}(v)=\frac{u_{\mp}-v}{s^{-\left(\epsilon_i, \alpha_j\right)} u_{\mp}-r^{-\left(\epsilon_i, \alpha_j\right)} v} \mathcal{X}_j^{+}(v) \mathfrak{h}_i^{ \pm}(u),
$$
$$
\mathfrak{h}_i^{ \pm}(u) \mathcal{X}_j^{-}(v)=\frac{s^{-\left(\epsilon_i, \alpha_j\right)} u_{ \pm}-r^{-\left(\epsilon_i, \alpha_j\right)} v}{u_{ \pm}-v} \mathcal{X}_j^{-}(v) \mathfrak{h}_i^{ \pm}(u),
$$
for $i=1, \ldots, n$, and $j=1, \ldots, n-1$, and
$$
\mathfrak{h}_i^{ \pm}(u) \mathcal{X}_n^{\epsilon}(v)=(rs)^{-\epsilon}\mathcal{X}_n^{\epsilon}(v) \mathfrak{h}_i^{ \pm}(u),
$$
%$$
%\mathfrak{h}_i^{ \pm}(u) \mathcal{X}_n^{-}(v)=r s \mathcal{X}_n^{+}(v) \mathfrak{h}_i^{ \pm}(u),
%$$
for $i=1,2, \cdots ,n-1$ together with
$$
\mathfrak{h}_{n+1}^{ \pm}(u) \mathcal{X}_n^{+}(v)=\frac{u_{\mp}-v}{r^{-1} s u_{\mp}-r s^{-1} v} \mathcal{X}_n^{+}(v) \mathfrak{h}_{n+1}^{ \pm}(u),
$$
$$
\mathfrak{h}_{n+1}^{ \pm}(u) \mathcal{X}_n^{-}(v)=\frac{r^{-1} s u_{ \pm}-r s^{-1} v}{u_{ \pm}-v} \mathcal{X}_n^{-}(v) \mathfrak{h}_{n+1}^{ \pm}(u),
$$
$$
\begin{aligned}
	\mathfrak{h}_{n+1}^{ \pm}(u)^{-1} \mathcal{X}_{n-1}^{+}(u) \mathfrak{h}_{n+1}^{ \pm}(u) & =\frac{r^{-1} u_{\mp}-s^{-1} v}{r^{-1} s u_{\mp}-r s^{-1} v} \mathcal{X}_{n-1}^{ \pm}(u), \\
	\mathfrak{h}_{n+1}^{ \pm}(u)^{-1} \mathcal{X}_{n-1}^{-}(u) \mathfrak{h}_{n+1}^{ \pm}(u) & =\frac{r^{-1} u_{ \pm}-s^{-1} v}{r^{-1} s u_{ \pm}-r s^{-1} v} \mathcal{X}_{n-1}^{ \pm}(u),
\end{aligned}
$$
while $(\epsilon=\pm)$
$$
\begin{aligned}
	\mathfrak{h}_{n+1}^{ \pm}(u) \mathcal{X}_i^{\epsilon}(v)=\mathcal{X}_i^{\epsilon}(v) \mathfrak{h}_{n+1}^{ \pm}(u),
	%\mathfrak{h}_{n+1}^{ \pm}(u) \mathcal{X}_i^{-}(v) & =\mathcal{X}_i^{-}(v) \mathfrak{h}_{n+1}^{ \pm}(u).
\end{aligned}
$$
for $1 \leqslant i \leqslant n-2$. For the relations involving $\mathcal{X}_i^{ \pm}(u)$,  we have
\begin{align*}
(u-s^{\mp(\alpha_i, \alpha_j)} v) \mathcal{X}_i^{ \pm}(u(r s^{-1})^{\frac{i}{2}}) \mathcal{X}_j^{ \pm}(v(r s^{-1})^{\frac{j}{2}})
=(s^{\mp(\alpha_i, \alpha_j)} u-v) \mathcal{X}_j^{ \pm}(v(r s^{-1})^{\frac{j}{2}}) \mathcal{X}_i^{ \pm}(u(r s^{-1})^{\frac{i}{2}}),
\end{align*}
for $i, j=1, \ldots, n-1$,
$$
\mathcal{X}_i^{ \pm}(u) \mathcal{X}_n^{ \pm}(u)=\mathcal{X}_n^{ \pm}(u) \mathcal{X}_i^{ \pm}(u),
$$
for $i=1,2, \cdots, n-2$, together with
\begin{flalign*}
	\left(u r s^{-1}-r^{-1} s v\right)^{ \pm 1} \mathcal{X}_{n-1}^{ \pm}(u) \mathcal{X}_n^{ \pm}(v)=(r s)^{ \pm 1}(u-v)^{ \pm 1} \mathcal{X}_n^{ \pm}(v) \mathcal{X}_{n-1}^{ \pm}(u),
\end{flalign*}
$$
(u-(r s^{-1})^{\frac{ \pm(\alpha_n, \alpha_n)}{2}} v) \mathcal{X}_n^{ \pm}(u) \mathcal{X}_n^{ \pm}(v)=((r s^{-1})^{\frac{ \pm(\alpha_n, \alpha_n)}{2}} u-v) \mathcal{X}_n^{ \pm}(v) \mathcal{X}_n^{ \pm}(u),
$$
and
 $$
\left[\mathcal{X}_i^{+}(u), \mathcal{X}_j^{-}(v)\right]=\delta_{i j}\left(s^{-1}-r^{-1}\right)\left(\delta\left(u_{-} / v_{+}\right) \mathfrak{h}_i^{-}\left(v_{+}\right)^{-1} \mathfrak{h}_{i+1}^{-}\left(v_{+}\right)-\delta\left(u_{+} / v_{-}\right) \mathfrak{h}_i^{+}\left(u_{+}\right)^{-1} \mathfrak{h}_{i+1}^{+}\left(u_{+}\right)\right).
$$
together with the Serre relations.

	\end{theorem}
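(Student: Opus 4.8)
The strategy is to assemble every relation in the statement from three inputs already established in the paper: the type $A$ relations carried by the embedded $U_{r,s}(\widehat{\mathfrak{gl}}_n)$-structure on the block of indices $1,\dots,n$ (the proposition obtained from the $\widehat{\mathfrak{gl}}_n$ $R$-matrix $R_A$); the rank-one reduction, in which $\bar R^{[1]}(u)$ coincides with a one-parameter $\widehat{\mathfrak{gl}}$-type $R$-matrix in the variable $rs^{-1}$ (Proposition \ref{p:23}); and the homomorphism $\psi_m$ of Theorem \ref{t:homo} together with the block commutation rules of Corollary \ref{c:15} and the $\mathfrak{e}$--$\ell$, $\mathfrak{f}$--$\ell$ relations (the proposition on $\mathfrak{e}_{mj}^{\pm}$ and Proposition \ref{p:20}). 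Recall that $\mathcal{X}_i^{+}(u)$ is the Drinfeld current built from the simple Gaussian coordinate $\mathfrak{e}_{i,i+1}^{\pm}(u)$ and $\mathcal{X}_i^{-}(u)$ from $\mathfrak{f}_{i+1,i}^{\pm}(u)$, so that, after Gauss decomposition, every claimed relation becomes a statement about these coordinates and the $\mathfrak{h}_i^{\pm}(u)$.

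First I would record the diagonal relations. For $i,j\le n$ the commutativity $\mathfrak{h}_i^{\pm}(u)\mathfrak{h}_j^{\pm}(v)=\mathfrak{h}_j^{\pm}(v)\mathfrak{h}_i^{\pm}(u)$, the relation $\mathfrak{h}_i^{\pm}(u)\mathfrak{h}_i^{\mp}(v)=\mathfrak{h}_i^{\mp}(v)\mathfrak{h}_i^{\pm}(u)$, and the $q$-commutation for $i<j$ follow from the type $A$ proposition; the case $i=n$, $j=n+1$ is exactly Proposition \ref{p:23}; and the cases with one index in $\{1,\dots,n\}$ and one in $\{n+1,\dots,2n\}$, as well as both $\ge n+1$, follow from Remark \ref{r:22} (the matrix $[\ell_{ij}^{\pm}(u)']_{i,j=n',\dots,1'}$ again obeys $\widehat{\mathfrak{gl}}_n$-relations) after identifying its Gauss generators with the $\mathfrak{h}_i^{\pm}$, or directly from Corollary \ref{c:15}. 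The constraints $\mathfrak{h}_{i,0}^{+}\mathfrak{h}_{i,0}^{-}=\mathfrak{h}_{i,0}^{-}\mathfrak{h}_{i,0}^{+}=1$ and $\mathfrak{h}_{n,0}^{+}\mathfrak{h}_{n+1,0}^{+}=1$ come from the defining relations of $U(\bar R^{[n]})$ and the Gauss decomposition evaluated at the constant term. Next the $\mathfrak{h}$--$\mathcal{X}$ relations: for $i=1,\dots,n$ and $j=1,\dots,n-1$ these are type $A$; the relations $\mathfrak{h}_i^{\pm}(u)\mathcal{X}_n^{\epsilon}(v)=(rs)^{-\epsilon}\mathcal{X}_n^{\epsilon}(v)\mathfrak{h}_i^{\pm}(u)$ for $i\le n-1$ and the relations of $\mathfrak{h}_{n+1}^{\pm}$ against $\mathcal{X}_n^{\pm}$ come from Proposition \ref{p:23} after substituting $\mathfrak{h}_{n+1}^{\pm}(u)=\ell_{n+1,n+1}^{\pm[1]}(u)-\mathfrak{f}_{n+1,n}^{\pm}(u)\mathfrak{h}_n^{\pm}(u)\mathfrak{e}_{n,n+1}^{\pm}(u)$ as in the Remark specializing the Gauss decomposition to $m=n-1$; the remaining ones --- $\mathfrak{h}_{n+1}^{\pm}$ against $\mathcal{X}_{n-1}^{\pm}$ and against $\mathcal{X}_i^{\pm}$ for $i\le n-2$ --- are obtained by feeding the Gauss-decomposed entries $\ell_{ij}^{\pm}=(\mathfrak{f}^{\pm}\mathfrak{h}^{\pm}\mathfrak{e}^{\pm})_{ij}$ into the $\mathfrak{e}$--$\ell$ and $\mathfrak{f}$--$\ell$ relations and then using the block-disjointness of Corollary \ref{c:15} to cancel the cross terms.

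The $\mathcal{X}$--$\mathcal{X}$ relations split the same way: for $i,j\le n-1$ they are type $A$; $\mathcal{X}_n^{\pm}$ with itself is Proposition \ref{p:23}; $\mathcal{X}_i^{\pm}$ with $\mathcal{X}_n^{\pm}$ for $i\le n-2$ is a bare commutation from Corollary \ref{c:15}; and the $\mathcal{X}^{+}$--$\mathcal{X}^{-}$ commutator with the $\delta$-function right-hand side is type $A$ for $i=j\le n-1$, Proposition \ref{p:23} for $i=j=n$, and vanishes for $i\ne j$ either by type $A$ or by a short computation from Corollary \ref{c:15} when one index equals $n$. This leaves the genuinely new relation $(urs^{-1}-r^{-1}sv)^{\pm1}\mathcal{X}_{n-1}^{\pm}(u)\mathcal{X}_n^{\pm}(v)=(rs)^{\pm1}(u-v)^{\pm1}\mathcal{X}_n^{\pm}(v)\mathcal{X}_{n-1}^{\pm}(u)$ coupling the tail of the type $A$ chain to the special node, together with the cross relations $\mathfrak{h}_{n+1}^{\pm}(u)^{-1}\mathcal{X}_{n-1}^{\pm}(u)\mathfrak{h}_{n+1}^{\pm}(u)$; I expect these to be the main obstacle, since neither the type $A$ $R$-matrix nor the rank-one $R$-matrix sees this coupling, so one must work inside the full $\bar R^{[n]}$ --- equivalently inside $U(\bar R^{[2]})$ for the four indices $\{n-1,n,n+1,n+2\}$ after applying $\psi_{n-2}$ --- tracking the extra off-diagonal terms $E_{i'j'}\otimes E_{ij}$ of the $C$-type $R$-matrix (the ``additional terms'' flagged in the introduction) while repeatedly substituting the Gauss-decomposed entries. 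Finally the Serre relations are obtained last by the same rank-two/rank-three reduction: one first derives the auxiliary commutation relations among $\mathfrak{e}_{i,i+1}^{\pm}$, $\mathfrak{e}_{i,i+2}^{\pm}$ and their $\mathfrak{f}$-analogues from the $\mathfrak{e}$--$\ell$ and $\mathfrak{f}$--$\ell$ propositions, and then reproduces the finite-type computation of Theorem \ref{T:C3}, where the cubic $(r,s)$-Serre relation at the branch node already appeared; I anticipate that this bookkeeping, rather than any conceptual point, is the longest part of the argument.
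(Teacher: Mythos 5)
Your proposal is correct and follows essentially the same route as the paper: the type $A$ relations on indices $1,\dots,n$, the rank-one reduction of Proposition \ref{p:23}, Remark \ref{r:22} for the primed block, and Corollary \ref{c:15} with the $\mathfrak{e}$--$\ell$, $\mathfrak{f}$--$\ell$ relations handle everything except the $\mathcal{X}_{n-1}$--$\mathcal{X}_n$ coupling, which the paper likewise isolates and proves by an explicit $n=2$ computation tracking the extra $E_{i'j'}\otimes E_{ij}$ terms before lifting via $\psi_{n-2}$. You have correctly identified both the decomposition of the work and the genuine point of difficulty.
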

			\begin{proof}
			We only need to verify the relations complementary to those produced in 4.4 and 4.5.   We first prove $$
			\frac{u_{ \pm}-v_{\mp}}{r u_{ \pm}-s v_{\mp}} \mathfrak{h}_i^{ \pm}(u) \mathfrak{h}_j^{\mp}(v)=\frac{u_{\mp}-v_{ \pm}}{r u_{\mp}-s v_{ \pm}} \mathfrak{h}_j^{\mp}(v) \mathfrak{h}_i^{ \pm}(u).
			$$
			for $j=n+1.$
			
			 By using Corollary \ref{c:15} for $i<n$, we deduce that
\begin{flalign}
\begin{aligned}\label{thm24.1}
				&\frac{u_{ \pm}-v_{\mp}}{r u_{ \pm}-s v_{\mp}} \mathfrak{h}_i^{ \pm}(u)\left(\mathfrak{h}_{n+1}^{\mp}(v)+\right. \left.\mathfrak{f}_{n+1, n}^{\mp}(v) \mathfrak{h}_n^{\mp}(v) \mathfrak{e}_{n, n+1}^{\mp}(v)\right) \\
				& =\frac{u_{\mp}-v_{ \pm}}{r u_{\mp}-s v_{ \pm}}\left(\mathfrak{h}_{n+1}^{ \pm}(v)+\mathfrak{f}_{n+1, n}^{\mp}(v) \mathfrak{h}_n^{\mp}(v) \mathfrak{e}_{n, n+1}^{\mp}(v)\right) \mathfrak{h}_i^{ \pm}(u),
\end{aligned}
\end{flalign}
and
\begin{flalign*}
\frac{u_{ \pm}-v_{\mp}}{r u_{ \pm}-s v_{\mp}} \mathfrak{h}_i^{ \pm}(u) \mathfrak{f}_{n+1, n}^{\mp}(v) \mathfrak{h}_n^{\mp}(v)=rs\frac{u_{\mp}-v_{ \pm}}{r u_{\mp}-s v_{ \pm}} \mathfrak{f}_{n+1, n}^{\mp}(v) \mathfrak{h}_n^{\mp}(v) \mathfrak{h}_i^{ \pm}(u) ,
\end{flalign*}
Hence, the left hand side of (\ref{thm24.1}) equals to
\begin{flalign*}
\frac{u_{ \pm}-v_{\mp}}{r u_{ \pm}-s v_{\mp}} \mathfrak{h}_i^{ \pm}(u) \mathfrak{h}_{n+1}^{\mp}(v)+rs\frac{u_{\mp}-v_{ \pm}}{r u_{\mp}-s v_{ \pm}} \mathfrak{f}_{n+1, n}^{\mp}(v) \mathfrak{h}_n^{\mp}(v) \mathfrak{h}_i^{ \pm}(u) \mathfrak{e}_{n, n+1}^{\mp}(v).
\end{flalign*}
Using Corollary \ref{c:15}
\begin{flalign*}
\frac{u_{ \pm}-v_{\mp}}{r u_{ \pm}-s v_{\mp}} \mathfrak{h}_i^{ \pm}(u) \mathfrak{h}_n^{\mp}(v)=\frac{u_{\mp}-v_{ \pm}}{r u_{\mp}-s v_{ \pm}} \mathfrak{h}_n^{\mp}(v) \mathfrak{h}_i^{ \pm}(u),
\end{flalign*}
so that the left hand side of (\ref{thm24.1}) can be written as
\begin{flalign*}
\frac{u_{ \pm}-v_{\mp}}{r u_{ \pm}-s v_{\mp}} \mathfrak{h}_i^{ \pm}(u) \mathfrak{h}_{n+1}^{\mp}(v)+rs\frac{u_{ \pm}-v_{\mp}}{r u_{ \pm}-s v_{\mp}} \mathfrak{f}_{n+1, n}^{\mp}(v) \mathfrak{h}_i^{ \pm}(u) \mathfrak{h}_n^{\mp}(v) \mathfrak{e}_{n, n+1}^{\mp}(v).
\end{flalign*}
Using Corollary \ref{c:15} once again, we find that
\begin{flalign*}
rs\frac{u_{ \pm}-v_{\mp}}{r u_{ \pm}-s v_{\mp}} \mathfrak{h}_i^{ \pm}(u) \mathfrak{h}_n^{\mp}(v) \mathfrak{e}_{n, n+1}^{\mp}(v)=\frac{u_{\mp}-v_{ \pm}}{r u_{\mp}-s v_{ \pm}} \mathfrak{h}_n^{\mp}(v) \mathfrak{e}_{n, n+1}^{\mp}(v) \mathfrak{h}_i^{ \pm}(u),
\end{flalign*}
for $i=1,2, \ldots, n-1$, so the left-hand side of (\ref{thm24.1}) takes the form
\begin{flalign*}
\frac{u_{ \pm}-v_{\mp}}{r u_{ \pm}-s v_{\mp}} \mathfrak{h}_i^{ \pm}(u) \mathfrak{h}_{n+1}^{\mp}(v)+\frac{u_{\mp}-v_{ \pm}}{r u_{\mp}-s v_{ \pm}} \mathfrak{f}_{n+1, n}^{\mp}(v) \mathfrak{h}_n^{\mp}(v) \mathfrak{e}_{n, n+1}^{\mp}(v) \mathfrak{h}_i^{ \pm}(u).
\end{flalign*}
which proved the relation with $j=n+1$.
The relations involving $\mathfrak{h}_{n+1}^{ \pm}(v)$ and $\mathcal{X}_i^{ \pm}(u)$, $i=1,2, \ldots, n-2$  are consequences of
the following relations:
\begin{flalign*}
\mathfrak{e}_{i, i+1}^{ \pm}(u) \mathfrak{h}_{n+1}^{ \pm}(v)=\mathfrak{h}_{n+1}^{ \pm}(v) \mathfrak{e}_{i, i+1}^{ \pm}(u), \quad \mathfrak{e}_{i, i+1}^{ \pm}(u) \mathfrak{h}_{n+1}^{\mp}(v)=\mathfrak{h}_{n+1}^{\mp}(v) \mathfrak{e}_{i, i+1}^{ \pm}(u),
\end{flalign*}
\begin{flalign*}
\mathfrak{f}_{i+1, i}^{ \pm}(u) \mathfrak{h}_{n+1}^{ \pm}(v)=\mathfrak{h}_{n+1}^{ \pm}(v) \mathfrak{f}_{i+1, i}^{ \pm}(u), \quad \mathfrak{f}_{i+1, i}^{ \pm}(u) \mathfrak{h}_{n+1}^{\mp}(v)=\mathfrak{h}_{n+1}^{\mp}(v) \mathfrak{f}_{i+1, i}^{ \pm}(u) .
\end{flalign*}
We will verify the second relation. Corollary \ref{c:15} implies that
\begin{flalign}
\begin{aligned}\label{thm24.2}
	& \frac{u_{ \pm}-v_{\mp}}{r u_{ \pm}-s v_{\mp}} \mathfrak{h}_i^{ \pm}(u) \mathfrak{e}_{i, i+1}^{ \pm}(u)\left(\mathfrak{h}_{n+1}^{\mp}(v)+\mathfrak{f}_{n+1, n}^{\mp}(v) \mathfrak{h}_n^{\mp}(v) \mathfrak{e}_{n, n+1}^{\mp}(v)\right) \\
	&=\frac{u_{\mp}-v_{ \pm}}{r u_{\mp}-s v_{ \pm}}\left(\mathfrak{h}_{n+1}^{\mp}(v)+\mathfrak{f}_{n+1, n}^{\mp}(v) \mathfrak{h}_n^{\mp}(v) \mathfrak{e}_{n, n+1}^{\mp}(v)\right) \mathfrak{h}_i^{ \pm}(u) \mathfrak{e}_{i, i+1}^{ \pm}(u) .
\end{aligned}
\end{flalign}
and also the following relations
\begin{flalign*}
\frac{u_{ \pm}-v_{\mp}}{r u_{ \pm}-s v_{\mp}} \mathfrak{h}_i^{ \pm}(u) \mathfrak{e}_{i, i+1}^{ \pm}(u) \mathfrak{f}_{n+1, n}^{\mp}(v) \mathfrak{h}_n^{\mp}(v)=rs\frac{u_{\mp}-v_{ \pm}}{r u_{\mp}-s v_{ \pm}} \mathfrak{f}_{n+1, n}^{\mp}(v) \mathfrak{h}_n^{\mp}(v) \mathfrak{h}_i^{ \pm}(u) \mathfrak{e}_{i, i+1}^{ \pm}(u),
\end{flalign*}
\begin{flalign*}
\frac{u_{ \pm}-v_{\mp}}{r u_{ \pm}-s v_{\mp}} \mathfrak{h}_i^{ \pm}(u) \mathfrak{e}_{i, i+1}^{ \pm}(u) \mathfrak{h}_n^{\mp}(v)=\frac{u_{\mp}-v_{ \pm}}{r u_{\mp}-s v_{ \pm}} \mathfrak{h}_n^{\mp}(v) \mathfrak{h}_i^{ \pm}(u) \mathfrak{e}_{i, i+1}^{ \pm}(u),
\end{flalign*}
\begin{flalign*}
rs\frac{u_{ \pm}-v_{\mp}}{r u_{ \pm}-s v_{\mp}} \mathfrak{h}_i^{ \pm}(u) \mathfrak{e}_{i, i+1}^{ \pm}(u) \mathfrak{h}_n^{\mp}(v) \mathfrak{e}_{n, n+1}^{\mp}(v)=\frac{u_{\mp}-v_{ \pm}}{r u_{\mp}-s v_{ \pm}} \mathfrak{h}_n^{\mp}(v) \mathfrak{e}_{n, n+1}^{\mp}(v) \mathfrak{h}_i^{ \pm}(u) \mathfrak{e}_{i, i+1}^{ \pm}(u) ,
\end{flalign*}
Thus, the left-hand side of (\ref{thm24.2}) equals to
\begin{flalign*}
\frac{u_{ \pm}-v_{\mp}}{r u_{ \pm}-s v_{\mp}} \mathfrak{h}_i^{ \pm}(u) \mathfrak{e}_{i, i+1}^{ \pm}(u) \mathfrak{h}_{n+1}^{\mp}(v)+\frac{u_{\mp}-v_{ \pm}}{r u_{\mp}-s v_{ \pm}} \mathfrak{f}_{n+1, n}^{\mp}(v) \mathfrak{h}_n^{\mp}(v) \mathfrak{e}_{n, n+1}^{\mp}(v) \mathfrak{h}_i^{ \pm}(u) \mathfrak{e}_{i, i+1}^{ \pm}(u),
\end{flalign*}
So we get
\begin{flalign*}
\frac{u_{ \pm}-v_{\mp}}{r u_{ \pm}-s v_{\mp}} \mathfrak{h}_i^{ \pm}(u) \mathfrak{e}_{i, i+1}^{ \pm}(u) \mathfrak{h}_{n+1}^{\mp}(v)=\frac{u_{\mp}-v_{ \pm}}{r u_{\mp}-s v_{ \pm}} \mathfrak{h}_{n+1}^{\mp}(v) \mathfrak{h}_i^{ \pm}(u) \mathfrak{e}_{i, i+1}^{ \pm}(u),
\end{flalign*}
Using the relation $
\frac{u_{ \pm}-v_{\mp}}{r u_{ \pm}-s v_{\mp}} \mathfrak{h}_i^{ \pm}(u) \mathfrak{h}_{n+1}^{\mp}(v)=\frac{u_{\mp}-v_{ \pm}}{r u_{\mp}-s v_{ \pm}} \mathfrak{h}_{n+1}^{\mp}(v) \mathfrak{h}_i^{ \pm}(u),
$
we get the relation $$\mathfrak{e}_{i, i+1}^{ \pm}(u) \mathfrak{h}_{n+1}^{\mp}(v)=\mathfrak{h}_{n+1}^{\mp}(v) \mathfrak{e}_{i, i+1}^{ \pm}(u).$$
Next we check the relations involving $\mathfrak{h}_{i}^{ \pm}(v)$ and $\mathcal{X}_n^{ \pm}(u)$ $i=1,2, \ldots, n-1$. In fact, %are implied by the following
$$
r s \mathfrak{e}_{n, n+1}^{ \pm}(u) \mathfrak{h}_i^{ \pm}(v)=\mathfrak{h}_i^{ \pm}(v) \mathfrak{e}_{n, n+1}^{ \pm}(u), \quad r s \mathfrak{e}_{n, n+1}^{ \pm}(u) \mathfrak{h}_i^{\mp}(v)=\mathfrak{h}_i^{\mp}(v) \mathfrak{e}_{n, n+1}^{ \pm}(u),
$$
$$
r s \mathfrak{f}_{n+1, n}^{ \pm}(u) \mathfrak{h}_i^{ \pm}(v)=\mathfrak{h}_i^{ \pm}(v) \mathfrak{f}_{n+1, n}^{ \pm}(u), \quad r s \mathfrak{f}_{n+1, n}^{ \pm}(u) \mathfrak{h}_i^{\mp}(v)=\mathfrak{h}_i^{\mp}(v) \mathfrak{f}_{n+1, n}^{ \pm}(u).
$$
Finally we will verify the last relation, and the others can be proved similarly. Corollary \ref{c:15} implies the following relations
\begin{align*}
	&\frac{u_{ \mp}-v_{\pm}}{r u_{ \mp}-s v_{\pm}}\mathfrak{h}_i^{\mp}(u)\left(\mathfrak{f}_{n+1, n-1}^{ \pm}(v) \mathfrak{h}_{n-1}^{ \pm}(v) \mathfrak{e}_{n-1, n}^{ \pm}(v)+\mathfrak{f}_{n+1, n}^{ \pm}(v) \mathfrak{h}_n^{ \pm}(v)\right)\\
	&=r s\frac{u_{\pm}-v_{ \mp}}{r u_{\pm}-s v_{ \mp}} \left(\mathfrak{f}_{n+1, n-1}^{ \pm}(v) \mathfrak{h}_{n-1}^{ \pm}(v) \mathfrak{e}_{n-1, n}^{ \pm}(v)+\mathfrak{f}_{n+1, n}^{ \pm}(v) \mathfrak{h}_n^{ \pm}(v)\right) \mathfrak{h}_i^{\mp}(u),\\
	&\frac{u_{ \mp}-v_{\pm}}{r u_{ \mp}-s v_{\pm}}\mathfrak{h}_i^{\mp}(u) \mathfrak{f}_{n+1, n-1}^{ \pm}(v) \mathfrak{h}_{n-1}^{ \pm}(v)=r s\frac{u_{\pm}-v_{ \mp}}{r u_{\pm}-s v_{ \mp}} \mathfrak{f}_{n+1, n-1}^{ \pm}(v) \mathfrak{h}_{n-1}^{ \pm}(v) \mathfrak{h}_i^{\mp}(u),\\
	&\frac{u_{ \mp}-v_{\pm}}{r u_{ \mp}-s v_{\pm}}\mathfrak{h}_i^{\mp}(u) \mathfrak{h}_{n-1}^{ \pm}(v) \mathfrak{e}_{n-1, n}^{ \pm}(v)=r s \frac{u_{\pm}-v_{ \mp}}{r u_{\pm}-s v_{ \mp}} \mathfrak{h}_{n-1}^{ \pm}(v) \mathfrak{e}_{n-1, n}^{ \pm}(v) \mathfrak{h}_i^{\mp}(u).
\end{align*}
Combining with $\mathfrak{h}_i^{ \pm}(u) \mathfrak{h}_i^{\mp}(v)=\mathfrak{h}_i^{\mp}(v) \mathfrak{h}_i^{ \pm}(u)$ and $\frac{u_{ \pm}-v_{\mp}}{r u_{ \pm}-s v_{\mp}} \mathfrak{h}_i^{ \pm}(u) \mathfrak{h}_j^{\mp}(v)=\frac{u_{\mp}-v_{ \pm}}{r u_{\mp}-s v_{ \pm}} \mathfrak{h}_j^{\mp}(v) \mathfrak{h}_i^{ \pm}(u)$, we get the last relation.

The remaining cases of the type C relations involving $\mathcal{X}_i^{ \pm}(u)$ and $\mathfrak{h}_j^{ \pm}(v)$ can be proved using Remark \ref{r:22} and
Corollary \ref{c:15}. In particular, Remark \ref{r:22} and the corresponding type A relations imply that
\begin{flalign*}
\mathfrak{h}_{n^{\prime}}^{ \pm}(u)^{-1} \mathfrak{e}_{n^{\prime},(n-1)^{\prime}}^{ \pm}(v) \mathfrak{h}_{n^{\prime}}^{ \pm}(u)=\frac{s^{-1} u-r^{-1} v}{u-v} \mathfrak{e}_{n^{\prime},(n-1)^{\prime}}^{ \pm}(v)-\frac{\left(s^{-1}-r^{-1}\right) v}{u-v} \mathfrak{e}_{n^{\prime},(n-1)^{\prime}}^{ \pm}(u),
\end{flalign*}
which can be rewritten as
\begin{flalign*}
\mathfrak{h}_{n^{\prime}}^{ \pm}(u)^{-1} \mathfrak{e}_{n-1, n}^{ \pm}\left(v r^{-2} s^2\right) \mathfrak{h}_{n^{\prime}}^{ \pm}(u)=r^{-1} s^{-1}\left(\frac{r u-s v}{u-v} \mathfrak{e}_{n-1, n}^{ \pm}\left(v r^{-2} s^2\right)-\frac{(r-s) v}{u-v} \mathfrak{e}_{n-1, n}^{ \pm}\left(u r^{-2} s^2\right)\right),
\end{flalign*}
and then
\begin{flalign*}
	& \mathfrak{h}_{n+1}^{ \pm}(u)^{-1} \mathcal{X}_{n-1}^{+}(v) \mathfrak{h}_{n+1}^{ \pm}(u)=\frac{r^{-1} u_{\mp}-s^{-1} v}{r^{-1} s u_{\mp}-r s^{-1} v} \mathcal{X}_{n-1}^{ \pm}(v) ,\\
	& \mathfrak{h}_{n+1}^{ \pm}(u)^{-1} \mathcal{X}_{n-1}^{-}(v) \mathfrak{h}_{n+1}^{ \pm}(u)=\frac{r^{-1} u_{ \pm}-s^{-1} v}{r^{-1} s u_{ \pm}-r s^{-1} v} \mathcal{X}_{n-1}^{ \pm}(v).
\end{flalign*}
Now we consider the relations among $\mathcal{X}_i^{ \pm}(u)$. For $i<n-1$, using the same method as in Corollary \ref{c:15} we have that
\begin{flalign*}
\mathfrak{e}_{i, i+1}^{ \pm}(u) \mathfrak{e}_{n, n+1}^{ \pm}(v)=\mathfrak{e}_{n, n+1}^{ \pm}(v) \mathfrak{e}_{i, i+1}^{ \pm}(u), \quad \mathfrak{e}_{i, i+1}^{ \pm}(u) \mathfrak{e}_{n, n+1}^{\mp}(v)=\mathfrak{e}_{n, n+1}^{\mp}(v) \mathfrak{e}_{i, i+1}^{ \pm}(u),
\end{flalign*}
\begin{flalign*}
\mathfrak{f}_{i+1, i}^{ \pm}(u) \mathfrak{f}_{n+1, n}^{ \pm}(v)=\mathfrak{f}_{n+1, n}^{ \pm}(v) \mathfrak{f}_{i+1, i}^{ \pm}(u), \quad \mathfrak{f}_{i+1, i}^{ \pm}(u) \mathfrak{f}_{n+1, n}^{\mp}(v)=\mathfrak{f}_{n+1, n}^{\mp}(v) \mathfrak{f}_{i+1, i}^{ \pm}(u).
\end{flalign*}
Consequently for all $i<n-1$, $\mathcal{X}_i^{ \pm}(u) \mathcal{X}_n^{ \pm}(v)=\mathcal{X}_n^{ \pm}(v) \mathcal{X}_i^{ \pm}(u).$ For the relation
between $\mathcal{X}_{n-1}^{ \pm}(u)$ and $\mathcal{X}_n^{ \pm}(v)$, it is sufficient to see the case $n=2$. We have that
\begin{flalign}
\begin{aligned}\label{thm24.3}
&r s \frac{u_{ \pm}-v_{\mp}}{r u_{ \pm}-s v_{\mp}} \ell_{12}^{ \pm}(u) \ell_{23}^{\mp}(v)+\frac{(r-s) u_{ \pm}}{r u_{ \pm}-s v_{\mp}} \ell_{22}^{ \pm}(u) \ell_{13}^{\mp}(v)\\
&=\frac{\left(u_{\mp}-v_{ \pm}\right) s\left(u_{\mp}-r^{-2} s^2 v_{ \pm}\right)}{\left(r u_{\mp}-s v_{ \pm}\right)\left(u_{\mp}-r^{-3} s^3 v_{ \pm}\right)} \ell_{23}^{\mp}(v) \ell_{12}^{ \pm}(u)+\frac{(r-s) v_{ \pm}}{r u_{\mp}-s v_{ \pm}} \ell_{22}^{\mp}(v) \ell_{13}^{ \pm}(u)\\
&\quad+\frac{(r-s)\left(u_{\mp}-v_{ \pm}\right) v_{ \pm} r^{-\frac{3}{2}} s^{-\frac{3}{2}}}{\left(r u_{\mp}-s v_{ \pm}\right)\left(u_{\mp}-r^{-3} s^3 v_{ \pm}\right)} \ell_{21}^{\mp}(v) \ell_{14}^{ \pm}(u)+\frac{(r-s)\left(u_{\mp}-v_{ \pm}\right) v_{ \pm} r^{-2} s^2}{\left(r u_{\mp}-s v_{ \pm}\right)\left(u_{\mp}-r^{-3} s^3 v_{ \pm}\right)} \ell_{22}^{\mp}(v) \ell_{13}^{ \pm}(u)\\
&\quad-\frac{(r-s)\left(u_{\mp}-v_{ \pm}\right) r^{-\frac{1}{2}} s^{-\frac{1}{2}} u_{\mp}}{\left(r u_{\mp}-s v_{ \pm}\right)\left(u_{\mp}-r^{-3} s^3 v_{ \pm}\right)} \ell_{24}^{\mp}(v) \ell_{11}^{ \pm}(u) .
\end{aligned}
\end{flalign}
On the other hand, it follows from the Gauss decomposition that $\ell_{23}^{\mp}(v)=f_{21}^{\mp}(v) \mathfrak{h}_1^{\mp}(v) \mathfrak{e}_{13}^{\mp}(v)+\mathfrak{h}_2^{\mp}(v) \mathfrak{e}_{23}^{\mp}(v)$, which can be used to rewrite the left hand side of (\ref{thm24.3}) as
\begin{align*}
r s \frac{u_{ \pm}-v_{\mp}}{r u_{ \pm}-s v_{\mp}} \ell_{12}^{ \pm}(u) \mathfrak{h}_2^{\mp}(v) \mathfrak{e}_{23}^{\mp}(v)+r s \frac{u_{ \pm}-v_{\mp}}{r u_{ \pm}-s v_{\mp}} &\ell_{12}^{ \pm}(u) \mathfrak{f}_{21}^{\mp}(v) \mathfrak{h}_1^{\mp}(v) \mathfrak{e}_{13}^{\mp}(v)\\
&\quad +\frac{(r-s) u_{ \pm}}{r u_{ \pm}-s v_{\mp}} \ell_{22}^{ \pm}(u) \ell_{13}^{\mp}(v).
\end{align*}
By the defining relations between $\ell_{12}^{+}(u)$ and $\ell_{21}^{-}(v)$
\begin{flalign*}
r s \frac{u_{ \pm}-v_{\mp}}{r u_{ \pm}-s v_{\mp}} \ell_{12}^{ \pm}(u) \ell_{21}^{\mp}(v)+\frac{(r-s) u_{ \pm}}{r u_{ \pm}-s v_{\mp}} &\ell_{22}^{ \pm}(u) \ell_{11}^{\mp}(v)\\
&=\frac{u_{\mp}-v_{ \pm}}{r u_{\mp}-s v_{ \pm}} \ell_{21}^{\mp}(v) \ell_{12}^{ \pm}(u)+\frac{(r-s) u_{\mp}}{r u_{\mp}-s v_{ \pm}} \ell_{22}^{\mp}(v) \ell_{11}^{ \pm}(u),
\end{flalign*}
we can reduce the left-hand side of (\ref{thm24.3}) to the following form
\begin{equation}
\begin{aligned}\label{thm24.4}
r s \frac{u_{ \pm}-v_{\mp}}{r u_{ \pm}-s v_{\mp}} \ell_{12}^{ \pm}(u) \mathfrak{h}_2^{\mp}(v) \mathfrak{e}_{23}^{\mp}(v)+\frac{u_{\mp}-v_{ \pm}}{r u_{\mp}-s v_{ \pm}} &\ell_{21}^{\mp}(v) \ell_{12}^{ \pm}(u) \mathfrak{e}_{13}^{\mp}(v)
+\frac{(r-s) u_{\mp}}{r u_{\mp}-s v_{ \pm}} \ell_{22}^{\mp}(v) \ell_{11}^{ \pm}(u) \mathfrak{e}_{13}^{\mp}(v).
\end{aligned}
\end{equation}
Furthermore, by using the defining relations between $\ell_{12}^{ \pm}(u)$ and $\ell_{11}^{\mp}(v)$:
\begin{flalign*}
\ell_{12}^{ \pm}(u) \ell_{11}^{\mp}(v)=\frac{u_{\mp}-v_{ \pm}}{r u_{\mp}-s v_{ \pm}} \ell_{11}^{\mp}(v) \ell_{12}^{ \pm}(u)+\frac{(r-s) u_{\mp}}{r u_{\mp}-s v_{ \pm}} \ell_{12}^{\mp}(v) \ell_{11}^{ \pm}(u),
\end{flalign*}
we can finally write the left-hand side of (\ref{thm24.3}) as
\begin{flalign*}
r s \frac{u_{ \pm}-v_{\mp}}{r u_{ \pm}-s v_{\mp}} \ell_{12}^{ \pm}(u) \mathfrak{h}_2^{\mp}(v) \mathfrak{e}_{23}^{\mp}(v)+\mathfrak{f}_{21}^{\mp}(v) \ell_{12}^{ \pm}(u) \ell_{13}^{\mp}(v)+\frac{(r-s) u_{\mp}}{r u_{\mp}-s v_{ \pm}} \mathfrak{h}_2^{\mp}(v) \ell_{11}^{ \pm}(u) \mathfrak{e}_{13}^{\mp}(v) .
\end{flalign*}
On the other hand, applying the formula $\ell_{2 i}^{\mp}(v)$ for $i=2,3,4$, we write the right-hand side of (\ref{thm24.3}) as
\begin{equation}
\begin{aligned}\label{thm24.5}
&\frac{\left(u_{\mp}-v_{ \pm}\right) s\left(u_{\mp}-r^{-2} s^2 v_{ \pm}\right)}{\left(r u_{\mp}-s v_{ \pm}\right)\left(u_{\mp}-r^{-3} s^3 v_{ \pm}\right)} \mathfrak{h}_2^{\mp}(v) \mathfrak{e}_{23}^{\mp}(v) \ell_{12}^{ \pm}(u)+\frac{(r-s) v_{ \pm}}{r u_{\mp}-s v_{ \pm}} \mathfrak{h}_2^{\mp}(v) \ell_{13}^{ \pm}(u)\\
&+\frac{(r-s)\left(u_{\mp}-v_{ \pm}\right) v_{ \pm} r^{-2} s^2}{\left(r u_{\mp}-s v_{ \pm}\right)\left(u_{\mp}-r^{-3} s^3 v_{ \pm}\right)} \mathfrak{h}_2^{\mp}(v) \ell_{13}^{ \pm}(u)-\frac{(r-s)\left(u_{\mp}-v_{ \pm}\right) r^{-\frac{1}{2}} s^{-\frac{1}{2}} u_{\mp}}{\left(r u_{\mp}-s v_{ \pm}\right)\left(u_{\mp}-r^{-3} s^3 v_{ \pm}\right)} \mathfrak{h}_2^{\mp}(v) \mathfrak{e}_{24}^{\mp}(v) \ell_{11}^{ \pm}(u)\\
&\quad+\frac{\left(u_{\mp}-v_{ \pm}\right) s\left(u_{\mp}-r^{-2} s^2 v_{ \pm}\right)}{\left(r u_{\mp}-s v_{ \pm}\right)\left(u_{\mp}-r^{-3} s^3 v_{ \pm}\right)} f_{21}^{\mp}(v) \ell_{13}^{\mp}(v) \ell_{12}^{ \pm}(u)+\frac{(r-s) v_{ \pm}}{r u_{\mp}-s v_{ \pm}} f_{21}^{\mp}(v) \mathfrak{h}_1^{\mp}(v) \mathfrak{e}_{12}^{\mp}(v) \ell_{13}^{ \pm}(u)\\
&\quad +\frac{(r-s)\left(u_{\mp}-v_{ \pm}\right) v_{ \pm} r^{-2} s^2}{\left(r u_{\mp}-s v_{ \pm}\right)\left(u_{\mp}-r^{-3} s^3 v_{ \pm}\right)} f_{21}^{\mp}(v) \mathfrak{h}_1^{\mp}(v) \mathfrak{e}_{12}^{\mp}(v) \ell_{13}^{ \pm}(u)\\
&\quad -\frac{(r-s)\left(u_{\mp}-v_{ \pm}\right) r^{-\frac{1}{2}} s^{-\frac{1}{2}} u_{\mp}}{\left(r u_{\mp}-s v_{ \pm}\right)\left(u_{\mp}-r^{-3} s^3 v_{ \pm}\right)}  f_{21}^{\mp}(v) \mathfrak{h}_1^{\mp}(v) \mathfrak{e}_{14}^{\mp}(v) \ell_{11}^{ \pm}(u).
\end{aligned}
\end{equation}
It follows from the relations between $\ell_{12}^{ \pm}(u)$ and $\ell_{13}^{\mp}(v)$ that
\begin{align*}
&\ell_{12}^{ \pm}(u) \ell_{13}^{\mp}(v)=\frac{(r-s)\left(u_{\mp}-v_{ \pm}\right) v_{ \pm} r^{-\frac{3}{2}} s^{-\frac{3}{2}}}{\left(r u_{\mp}-s v_{ \pm}\right)\left(u_{\mp}-r^{-3} s^3 v_{ \pm}\right)} \ell_{11}^{\mp}(v) \ell_{14}^{ \pm}(u)\\
&\quad +\left(\frac{(r-s)\left(u_{\mp}-v_{ \pm}\right) v_{ \pm} r^{-2} s^2}{\left(r u_{\mp}-s v_{ \pm}\right)\left(u_{\mp}-r^{-3} s^3 v_{ \pm}\right)}+\frac{(r-s) v_{ \pm}}{r u_{\mp}-s v_{ \pm}}\right) \ell_{12}^{\mp}(v) \ell_{13}^{ \pm}(u)\\
&\quad+\frac{\left(u_{\mp}-v_{ \pm}\right) s\left(u_{\mp}-r^{-2} s^2 v_{ \pm}\right)}{\left(r u_{\mp}-s v_{ \pm}\right)\left(u_{\mp}-r^{-3} s^3 v_{ \pm}\right)} \ell_{13}^{\mp}(v) \ell_{12}^{ \pm}(u)+\frac{(s-r)\left(u_{\mp}-v_{ \pm}\right) r^{-\frac{1}{2}} s^{-\frac{1}{2}} u_{\mp}}{\left(r u_{\mp}-s v_{ \pm}\right)\left(u_{\mp}-r^{-3} s^3 v_{ \pm}\right)} \ell_{14}^{\mp}(v) \ell_{11}^{ \pm}(u).
\end{align*}
Comparing the equations (\ref{thm24.4}) and (\ref{thm24.5}), we can rewrite
\begin{align*}
&r s \frac{u_{ \pm}-v_{\mp}}{r u_{ \pm}-s v_{\mp}} \ell_{12}^{ \pm}(u) \mathfrak{h}_2^{\mp}(v) \mathfrak{e}_{23}^{\mp}(v)+\frac{(r-s) u_{\mp}}{r u_{\mp}-s v_{ \pm}} \mathfrak{h}_2^{\mp}(v) \ell_{11}^{ \pm}(u) \mathfrak{e}_{13}^{\mp}(v)\\
&=\frac{\left(u_{\mp}-v_{ \pm}\right) s\left(u_{\mp}-r^{-2} s^2 v_{ \pm}\right)}{\left(r u_{\mp}-s v_{ \pm}\right)\left(u_{\mp}-r^{-3} s^3 v_{ \pm}\right)} \mathfrak{h}_2^{\mp}(v) \mathfrak{e}_{23}^{\mp}(v) \ell_{12}^{ \pm}(u)+\frac{(r-s) v_{ \pm}}{r u_{\mp}-s v_{ \pm}} \mathfrak{h}_2^{\mp}(v) \ell_{13}^{ \pm}(u)\\
&\quad +\frac{(r-s)\left(u_{\mp}-v_{ \pm}\right) v_{ \pm} r^{-2} s^2}{\left(r u_{\mp}-s v_{ \pm}\right)\left(u_{\mp}-r^{-3} s^3 v_{ \pm}\right)} \mathfrak{h}_2^{\mp}(v) \ell_{13}^{ \pm}(u)\\
&-\frac{(r-s)\left(u_{\mp}-v_{ \pm}\right) r^{-\frac{1}{2}} s^{-\frac{1}{2}} u_{\mp}}{\left(r u_{\mp}-s v_{ \pm}\right)\left(u_{\mp}-r^{-3} s^3 v_{ \pm}\right)} \mathfrak{h}_2^{\mp}(v) \mathfrak{e}_{24}^{\mp}(v) \ell_{11}^{ \pm}(u).
\end{align*}
Recalling Proposition \ref{p:20}, we have that
\begin{flalign*}
\mathfrak{e}_{m j}^{ \pm}(u) \ell_{k l}^{\mp[n-m]}(v)=r^{-1} s^{-1} \frac{r u_{\mp}-s v_{ \pm}}{u_{\mp}-v_{ \pm}} \ell_{k j}^{\mp[n-m]}(v) \mathfrak{e}_{m l}^{ \pm}(u)-r^{-1} s^{-1} \frac{(r-s) u_{\mp}}{u_{\mp}-v_{ \pm}} \ell_{k j}^{\mp[n-m]}(v) \mathfrak{e}_{m j}^{\mp}(v),
\end{flalign*}
\begin{flalign*}
\frac{u_{ \pm}-v_{\mp}}{r u_{ \pm}-s v_{\mp}} \mathfrak{h}_1^{ \pm}(u) \mathfrak{h}_2^{\mp}(v)=\frac{u_{\mp}-v_{ \pm}}{r u_{\mp}-s v_{ \pm}} \mathfrak{h}_2^{\mp}(v) \mathfrak{h}_1^{ \pm}(u),
\end{flalign*}
then we get the relation
\begin{flalign*}
\ell_{12}^{ \pm}(u) \mathfrak{h}_2^{\mp}(v)=r^{-1} s^{-1} \frac{r u_{\mp}-s v_{ \pm}}{u_{\mp}-v_{ \pm}} \mathfrak{h}_1^{ \pm}(u) \mathfrak{h}_2^{\mp}(v) \mathfrak{e}_{12}^{ \pm}(u)-r^{-1} s^{-1} \frac{(r-s) u_{\mp}}{u_{\mp}-v_{ \pm}} \mathfrak{h}_1^{ \pm}(u) \mathfrak{h}_2^{\mp}(v) \mathfrak{e}_{12}^{\mp}(v)
\end{flalign*}
\begin{flalign*}
=r^{-1} s^{-1} \frac{r u_{ \pm}-s v_{\mp}}{u_{ \pm}-v_{\mp}} \mathfrak{h}_2^{\mp}(v) \mathfrak{h}_1^{ \pm}(u) \mathfrak{e}_{12}^{ \pm}(u)-r^{-1} s^{-1} \frac{(r-s) u_{\mp}}{r u_{\mp}-s v_{ \pm}} \frac{r u_{ \pm}-s v_{\mp}}{u_{ \pm}-v_{\mp}} \mathfrak{h}_2^{\mp}(v) \mathfrak{h}_1^{ \pm}(u) \mathfrak{e}_{12}^{\mp}(v) .
\end{flalign*}
Moreover, using Corollary \ref{c:15}, we also have:
\begin{flalign*}
r s \mathfrak{h}_1^{ \pm}(u) \mathfrak{e}_{23}^{\mp}(v)=\mathfrak{e}_{23}^{\mp}(v) \mathfrak{h}_1^{ \pm}(u).
\end{flalign*}
Dividing $\mathfrak{h}_1^{ \pm}(u)$ and $\mathfrak{h}_2^{\mp}(v)$, we get the following relation
\begin{flalign*}
\begin{aligned}
	& \mathfrak{e}_{12}^{ \pm}(u) \mathfrak{e}_{23}^{\mp}(v)+\frac{(r-s) u_{\mp}}{r u_{\mp}-s v_{ \pm}} \mathfrak{e}_{13}^{\mp}(v)-\frac{(r-s) u_{\mp}}{r u_{\mp}-s v_{ \pm}} \mathfrak{e}_{12}^{\mp}(v) \mathfrak{e}_{23}^{\mp}(v) \\
	& =r s \frac{\left(u_{\mp}-v_{ \pm}\right) s\left(u_{\mp}-r^{-2} s^2 v_{ \pm}\right)}{\left(r u_{\mp}-s v_{ \pm}\right)\left(u_{\mp}-r^{-3} s^3 v_{ \pm}\right)} \mathfrak{e}_{23}^{\mp}(v) \mathfrak{e}_{12}^{ \pm}(u)+\frac{(r-s) v_{ \pm}}{r u_{\mp}-s v_{ \pm}} \mathfrak{e}_{13}^{ \pm}(u) \\
	& +\frac{(r-s)\left(u_{\mp}-v_{ \pm}\right) v_{ \pm} r^{-2} s^2}{\left(r u_{\mp}-s v_{ \pm}\right)\left(u_{\mp}-r^{-3} s^3 v_{ \pm}\right)} \mathfrak{e}_{13}^{ \pm}(u)-\frac{(r-s)\left(u_{\mp}-v_{ \pm}\right) r^{-\frac{1}{2}} s^{-\frac{1}{2}} u_{\mp}}{\left(r u_{\mp}-s v_{ \pm}\right)\left(u_{\mp}-r^{-3} s^3 v_{ \pm}\right)} \mathfrak{h}_1^{ \pm}(u)^{-1} \mathfrak{e}_{24}^{\mp}(v) \mathfrak{h}_1^{ \pm}(u).
\end{aligned}
\end{flalign*}
Similar arguments imply following relations
\begin{flalign*}
\begin{aligned}
	& r s \frac{u_{ \pm}-v_{\mp}}{r u_{ \pm}-s v_{\mp}} \mathfrak{h}_1^{ \pm}(u) \mathfrak{h}_2^{\mp}(v) \mathfrak{e}_{24}^{\mp}(v)=\frac{\left(u_{\mp}-v_{ \pm}\right) s\left(u_{\mp}-r^{-2} s^2 v_{ \pm}\right)}{\left(r u_{\mp}-s v_{ \pm}\right)\left(u_{\mp}-r^{-3} s^3 v_{ \pm}\right)} \mathfrak{h}_2^{\mp}(v) \mathfrak{e}_{24}^{\mp}(v) \mathfrak{h}_1^{ \pm}(u) \\
	& +\frac{(r-s)\left(u_{\mp}-v_{ \pm}\right) v_{ \pm} r^{-\frac{3}{2}} s^{-\frac{3}{2}}}{\left(r u_{\mp}-s v_{ \pm}\right)\left(u_{\mp}-r^{-3} s^3 v_{ \pm}\right)} \mathfrak{h}_2^{\mp}(v) \mathfrak{h}_1^{ \pm}(u) \mathfrak{e}_{13}^{ \pm}(u) \\
	& -\frac{(r-s)\left(u_{\mp}-v_{ \pm}\right) v_{ \pm} r^{-\frac{5}{2}} s^{\frac{5}{2}}}{\left(r u_{\mp}-s v_{ \pm}\right)\left(u_{\mp}-r^{-3} s^3 v_{ \pm}\right)} \mathfrak{h}_2^{\mp}(v) \mathfrak{e}_{23}^{\mp}(v) \mathfrak{h}_1^{ \pm}(u) \mathfrak{e}_{12}^{ \pm}(u) .
\end{aligned}
\end{flalign*}
Since
\begin{flalign*}
\frac{u_{ \pm}-v_{\mp}}{r u_{ \pm}-s v_{\mp}} \mathfrak{h}_1^{ \pm}(u) \mathfrak{h}_2^{\mp}(v)=\frac{u_{\mp}-v_{ \pm}}{r u_{\mp}-s v_{ \pm}} \mathfrak{h}_2^{\mp}(v) \mathfrak{h}_1^{ \pm}(u),
\end{flalign*}
we then get following relation
\begin{flalign*}
\begin{aligned}
	& r s \mathfrak{h}_1^{ \pm}(u) \mathfrak{e}_{24}^{\mp}(v)=\frac{s\left(u_{\mp}-v_{ \pm} r^{-2} s^2\right)}{u_{\mp}-r^{-3} s^3 v_{ \pm}} \mathfrak{e}_{24}^{\mp}(v) \mathfrak{h}_1^{ \pm}(u) \\
	& +\frac{(r-s) v_{ \pm} r^{-\frac{3}{2}} s^{-\frac{3}{2}}}{u_{\mp}-r^{-3} s^3 v_{ \pm}} \mathfrak{h}_1^{ \pm}(u) \mathfrak{e}_{13}^{ \pm}(u)-\frac{(r-s) v_{ \pm} r^{-\frac{5}{2}} s^{\frac{5}{2}}}{u_{\mp}-r^{-3} s^3 v_{ \pm}} \mathfrak{e}_{23}^{\mp}(v) \mathfrak{h}_1^{ \pm}(u) \mathfrak{e}_{12}^{ \pm}(u),
\end{aligned}
\end{flalign*}
combining with the above relations, we get that
\begin{flalign*}
\begin{aligned}
	& \mathfrak{e}_{12}^{ \pm}(u) \mathfrak{e}_{23}^{\mp}(v)=\frac{1}{r s^{-1} u_{\mp}-r^{-1} s v_{ \pm}}\left(\left(r s^{-1}-r^{-1} s\right) v_{ \pm} \mathfrak{e}_{13}^{ \pm}(u)+r s\left(u_{\mp}-v_{ \pm}\right) \mathfrak{e}_{23}^{\mp}(v) \mathfrak{e}_{12}^{ \pm}(u)\right. \\
	& \left.-\left(r s^{-1}-r^{-1} s\right) u_{\mp} \mathfrak{e}_{12}^{\mp}(v) \mathfrak{e}_{23}^{\mp}(v)+\left(r s^{-1}-r^{-1} s\right) u_{\mp} \mathfrak{e}_{13}^{\mp}(v)\right) \\
	& +\frac{(r-s) u_{\mp}\left(u_{\mp}-v_{ \pm}\right)}{\left(r s^{-1} u_{\mp}-r^{-1} s v_{ \pm}\right)\left(r u_{\mp}-s v_{ \pm}\right)}\left(\mathfrak{e}_{12}^{\mp}(v) \mathfrak{e}_{23}^{\mp}(v)-\mathfrak{e}_{13}^{\mp}(v)-r s^{-1} \mathfrak{e}_{24}^{\mp}(v)\right) .
\end{aligned}
\end{flalign*}
Multiply both sides by $r u_{\mp}-s v_{ \pm}$ and setting $r u_{\mp}=s v_{ \pm}$ we see that the second summand vanishes. Finally, by Theorem \ref{t:homo},
we have that
\begin{flalign*}
\begin{aligned}
	& \left(r s^{-1} u_{\mp}-r^{-1} s v_{ \pm}\right) \mathfrak{e}_{n-1, n}^{ \pm}(u) \mathfrak{e}_{n, n+1}^{\mp}(v) \\
	& =r s\left(u_{\mp}-v_{ \pm}\right) \mathfrak{e}_{n, n+1}^{\mp}(v) \mathfrak{e}_{n-1, n}^{ \pm}(u)+\left(r s^{-1}-r^{-1} s\right) v_{ \pm} \mathfrak{e}_{n-1, n+1}^{ \pm}(u) \\
	& -\left(r s^{-1}-r^{-1} s\right) u_{\mp} \mathfrak{e}_{n-1, n}^{\mp}(v) \mathfrak{e}_{n, n+1}^{\mp}(v)+\left(r s^{-1}-r^{-1} s\right) u_{\mp} \mathfrak{e}_{n-1, n+1}^{\mp}(v) .
\end{aligned}
\end{flalign*}
By similar consideration, we have the following relations%Quite similar calculations lead to its counterparts,
\begin{align*}
	& \left(r s^{-1} u-r^{-1} s v\right) \mathfrak{e}_{n-1, n}^{ \pm}(u) \mathfrak{e}_{n, n+1}^{ \pm}(v) \\
	& =r s(u-v) \mathfrak{e}_{n, n+1}^{ \pm}(v) \mathfrak{e}_{n-1, n}^{ \pm}(u)+\left(r s^{-1}-r^{-1} s\right) v \mathfrak{e}_{n-1, n+1}^{ \pm}(u) \\
	& -\left(r s^{-1}-r^{-1} s\right) u \mathfrak{e}_{n-1, n}^{ \pm}(v) \mathfrak{e}_{n, n+1}^{ \pm}(v)+\left(r s^{-1}-r^{-1} s\right) u \mathfrak{e}_{n-1, n+1}^{ \pm}(v) ,
\end{align*}
and
\begin{align*}
	& r s\left(u_{ \pm}-v_{\mp}\right) \mathfrak{f}_{n, n-1}^{ \pm}(u) \mathfrak{f}_{n+1, n}^{\mp}(v) \\
	& =\left(r s^{-1} u_{ \pm}-r^{-1} s v_{\mp}\right) \mathfrak{f}_{n+1, n}^{\mp}(v) \mathfrak{f}_{n, n-1}^{ \pm}(u)-\left(r s^{-1}-r^{-1} s\right) v_{\mp} \mathfrak{f}_{n+1, n-1}^{\mp}(v) \\
	& +\left(r s^{-1}-r^{-1} s\right) v_{\mp} \mathfrak{f}_{n+1, n}^{\mp}(v) \mathfrak{f}_{n, n-1}^{\mp}(v)+\left(r s^{-1}-r^{-1} s\right) u_{ \pm} \mathfrak{f}_{n+1, n-1}^{ \pm}(u),
\end{align*}
as well as
\begin{align*}
	& r s(u-v) \mathfrak{f}_{n, n-1}^{ \pm}(u) \mathfrak{f}_{n+1, n}^{ \pm}(v) \\
	& =\left(r s^{-1} u-r^{-1} s v\right) \mathfrak{f}_{n+1, n}^{ \pm}(v) \mathfrak{f}_{n, n-1}^{ \pm}(u)-\left(r s^{-1}-r^{-1} s\right) v \mathfrak{f}_{n+1, n-1}^{ \pm}(v) \\
	& +\left(r s^{-1}-r^{-1} s\right) v \mathfrak{f}_{n+1, n}^{ \pm}(v) \mathfrak{f}_{n, n-1}^{ \pm}(v)+\left(r s^{-1}-r^{-1} s\right) u \mathfrak{f}_{n+1, n-1}^{ \pm}(u) .
\end{align*}
Therefore we have shown that %As a consequence, we have thus verified the relations
\begin{flalign*}
\left(u r s^{-1}-r^{-1} s v\right)^{ \pm 1} \mathcal{X}_{n-1}^{ \pm}(u) \mathcal{X}_n^{ \pm}(v)=(r s)^{ \pm 1}(u-v)^{ \pm 1} \mathcal{X}_n^{ \pm}(v) \mathcal{X}_{n-1}^{ \pm}(u).
\end{flalign*}
Now for $i \leqslant n-1$, we would like to show that %and verify the relations
\begin{flalign*}
\begin{array}{lll}
	\mathfrak{e}_{i, i+1}^{ \pm}(u) \mathfrak{f}_{n+1, n}^{ \pm}(v)=\mathfrak{f}_{n+1, n}^{ \pm}(v) \mathfrak{e}_{i, i+1}^{ \pm}(u), & & \mathfrak{e}_{i, i+1}^{ \pm}(u) \mathfrak{f}_{n+1, n}^{\mp}(v)=\mathfrak{f}_{n+1, n}^{\mp}(v) \mathfrak{e}_{i, i+1}^{ \pm}(u), \\
	\mathfrak{f}_{i+1, i}^{ \pm}(u) \mathfrak{e}_{n, n+1}^{ \pm}(v)=\mathfrak{e}_{n, n+1}^{ \pm}(v) \mathfrak{f}_{i+1, i}^{ \pm}(u), & & \mathfrak{f}_{i+1, i}^{ \pm}(u) \mathfrak{e}_{n, n+1}^{\mp}(v)=\mathfrak{e}_{n, n+1}^{\mp}(v) \mathfrak{f}_{i+1, i}^{ \pm}(u) .
\end{array}
\end{flalign*}
The proof of these four relations are quite similar, so we only verify the second one. For $i \leqslant n-2$, Corollary \ref{c:15} gives that
\begin{align*}
\frac{u_{ \pm}-v_{\mp}}{r u_{ \pm}-s v_{\mp}} \mathfrak{h}_i^{ \pm}(u) \mathfrak{e}_{i, i+1}^{ \pm}(u) \mathfrak{f}_{n+1, n}^{\mp}(v) \mathfrak{h}_n^{\mp}(v)&=rs\frac{u_{\mp}-v_{ \pm}}{r u_{\mp}-s v_{ \pm}} \mathfrak{f}_{n+1, n}^{\mp}(v) \mathfrak{h}_n^{\mp}(v) \mathfrak{h}_i^{ \pm}(u) \mathfrak{e}_{i, i+1}^{ \pm}(u),\\
\frac{u_{ \pm}-v_{\mp}}{r u_{ \pm}-s v_{\mp}} \mathfrak{h}_i^{ \pm}(u) \mathfrak{f}_{n+1, n}^{\mp}(v) \mathfrak{h}_n^{\mp}(v)&=rs\frac{u_{\mp}-v_{ \pm}}{r u_{\mp}-s v_{ \pm}} \mathfrak{f}_{n+1, n}^{\mp}(v) \mathfrak{h}_n^{\mp}(v) \mathfrak{h}_i^{ \pm}(u) .
\end{align*}
Therefore,
\begin{flalign*}
\mathfrak{e}_{i, i+1}^{ \pm}(u) \mathfrak{f}_{n+1, n}^{\mp}(v) \mathfrak{h}_n^{\mp}(v)=\mathfrak{f}_{n+1, n}^{\mp}(v) \mathfrak{h}_n^{\mp}(v) \mathfrak{e}_{i, i+1}^{ \pm}(u).
\end{flalign*}
Also by Corollary \ref{c:15}, $[\mathfrak{h}_n^{\mp}(v), \mathfrak{e}_{i, i+1}^{ \pm}(u)]=0$, which implies that
$\mathfrak{e}_{i, i+1}^{ \pm}(u) \mathfrak{f}_{n+1, n}^{\mp}(v)=\mathfrak{f}_{n+1, n}^{\mp}(v) \mathfrak{e}_{i, i+1}^{ \pm}(u).$

By Proposition \ref{p:20}, we have that
\begin{flalign*}
\begin{aligned}
	& \mathfrak{e}_{n-1, n}^{ \pm}(u) \mathfrak{f}_{n+1, n}^{\mp}(v) \mathfrak{h}_n^{\mp}(v)=r^{-1} s^{-1} \frac{r u_{\mp}-s v_{ \pm}}{u_{\mp}-v_{ \pm}} \mathfrak{f}_{n+1, n}^{\mp}(v) \mathfrak{h}_n^{\mp}(v) \mathfrak{e}_{n-1, n}^{ \pm}(u) \\
	& +r^{-1} s^{-1} \frac{(r-s) v_{ \pm}}{u_{\mp}-v_{ \pm}} \mathfrak{f}_{n+1, n}^{\mp}(v) \mathfrak{h}_n^{\mp}(v) \mathfrak{e}_{n-1, n}^{\mp}(v),
\end{aligned}
\end{flalign*}
and
\begin{flalign*}
\mathfrak{e}_{n-1, n}^{ \pm}(u) \mathfrak{h}_n^{\mp}(v)=r^{-1} s^{-1} \frac{r u_{\mp}-s v_{ \pm}}{u_{\mp}-v_{ \pm}} \mathfrak{h}_n^{\mp}(v) \mathfrak{e}_{n-1, n}^{ \pm}(u)+r^{-1} s^{-1} \frac{(r-s) v_{ \pm}}{u_{\mp}-v_{ \pm}} \mathfrak{h}_n^{\mp}(v) \mathfrak{e}_{n-1, n}^{\mp}(v) .
\end{flalign*}
Therefore, for $i \leqslant n-1$,
\begin{flalign*}
\mathfrak{e}_{i, i+1}^{ \pm}(u) \mathfrak{f}_{n+1, n}^{\mp}(v)=\mathfrak{f}_{n+1, n}^{\mp}(v) \mathfrak{e}_{i, i+1}^{ \pm}(u).
\end{flalign*}
We have now derived all commutation relations for $\mathcal{X}_i^{+}(u)$ and $\mathcal{X}_j^{-}(v)$.
\end{proof}
We have shown that the following map $\tau: U_{r, s}(\mathrm{C}_n^{(1)}) \rightarrow U(\bar{R})$ is a homomorphism.
\begin{flalign*}
\begin{aligned}
	& x_i^{ \pm}(u)\mapsto (r_i-s_i)^{-1} X_i^{ \pm}(u(r s^{-1})^{\frac{i}{2}}), \\
	& \psi_i(u)\mapsto r^{-1}s^{-1}h_{i+1}^{-}(u(r s^{-1})^{\frac{i}{2}}) h_i^{-}(u(r s^{-1})^{\frac{i}{2}})^{-1}, \\
	& \varphi_i(u)\mapsto r^{-1}s^{-1}h_{i+1}^{+}(u(r s^{-1})^{\frac{i}{2}}) h_i^{+}(u(r s^{-1})^{\frac{i}{2}})^{-1},
\end{aligned}
\end{flalign*}
for $i=1, \ldots, n-1$, and
\begin{flalign*}
\begin{aligned}
	& x_n^{ \pm}(u)\mapsto (r_n-s_n)^{-1} X_n^{ \pm}(u(r s^{-1})^{\frac{n+1}{2}}), \\
	& \psi_n(u)\mapsto r^{-2}s^{-2}h_{n+1}^{-}(u(r s^{-1})^{\frac{n+1}{2}}) h_n^{-}(u(r s^{-1})^{\frac{n+1}{2}})^{-1}, \\
	& \varphi_n(u)\mapsto r^{-2}s^{-2}h_{n+1}^{+}(u(r s^{-1})^{\frac{n+1}{2}}) h_n^{+}(u(r s^{-1})^{\frac{n+1}{2}})^{-1} .
\end{aligned}
\end{flalign*}
\begin{remark} The Drinfeld realization and FRT formulism of the two-paremater quantum enveloping algebra
	$U_{r, s}(\widehat{\mathfrak g})$ in types $B_n^{(1)}$, $D_n^{(1)}$ have been recently given in \cite{HXZ, ZHX}.
\end{remark}

\bigskip

{\bf Statement of Confict Interest}. On behalf of all authors, the corresponding author states that
there is no conflict of interest.

\bigskip

{\bf Data Availability Statement}. All data generated during the study are included in the article.


\begin{thebibliography}{99}
	\bibitem{B}
	R.J.  Baxter, Exactly solved models in statistical mechanics, Elsevier, 2016.   %1
	
	%\bibitem{BW1} %18
	%     G. Benkart, S. Witherspoon, Representations of $2$-parameter quantum groups and Schur-Weyl duality. In: Hopf algebras, Lecture Notes in Pure and Appl Math, Vol 237. New York: Dekker, 2004, 65---92.
	
	\bibitem{BW2}  %11
	G. Benkart, S. Witherspoon, Two-parameter quantum groups and Drinfeld doubles, Algebr.
	Represent. Theory. 7 (2004), 261---286.
	
	\bibitem{BGH1}   %12
	N. Bergeron, Y. Gao, N. Hu, Drinfeld doubles and Lusztig symmetries of two-parameter quantum groups, J. Algebra. 301 (2006), 378---405.
	
	\bibitem{BGH2}  %16
	N. Bergeron, Y. Gao, N. Hu, Representations of two-parameter quantum orthogonal groups and symplectic groups, In: Proceedings of the International Conference on Complex Geometry and Related Fields, AMS/IP, Stud. Adv. Math., Vol. 39, pp. 1---21, Amer. Math. Soc., Providence, RI, 2007.
	
	\bibitem{BK} J. Brundan, A. Kleshchev, Parabolic presentations of the Yangian $Y(\mathfrak{gl}_n)$, Commun. Math. Phys. 254 (2005), 191---220.
	
	\bibitem{FL} Z. Fan, Y. Li, Two-parameter quantum algebras, canonical bases, and categorifications, Int. Math. Res. Not. 16 (2015), 7016---7062.
	
	\bibitem{FRT}    %3
	L.D. Faddeev, Yu N. Reshetikhin, L.A. Takhtajan, Quantization of Lie groups and Lie algebras, Leningrad Math. J. 1 (1989), 193---225.
	
	\bibitem{DF}   %6
	J. Ding, I.B. Frenkel, Isomorphism of two realizations of quantum affine algebra $U_q(\widehat{\mathfrak{g l}}_n)$, Commun. Math. Phys. 156 (1993), 277---300.
	
	\bibitem{D}   %5
	V.G. Drinfeld, A new realization of Yangians and Quantized Affine Algebras, Sov. Math. Dokl. 36 (1988), 212---216.
	% Dataset. https://doi.org/10.1007/BF01247086.

\bibitem{GWX}   %19
	M.-L. Ge, Y.-S. Wu, K. Xue, Explicit trigonometric Yang-Baxterization, Internat. J. Modern. Phys. A. 6 (1991), 3735---3779.
	
	\bibitem{GR}
	{I. M. Gelfand, V. S. Retakh},
	Determinants
	of matrices over noncommutative rings, {Funct. Anal. Appl.} 25
	(1991), 91---102.

	\bibitem{HM} N. Hayashi, K. Miki, L-operators and Drinfeld's generators, J. Math. Phys. 39 (1998), 1623---1636.
	\bibitem{HRZ}   %13

	N. Hu, M. Rosso, H. Zhang, Two-parameter quantum affine algebra $U_{r, s}(\widehat{s l}_n)$, Drinfeld
	realization and quantum affine Lyndon basis, Comm. Math. Phys. 278 (2008), 453---486.

\bibitem{Ts} A. Tsymbaliuk, PBWD bases and shuffle algebra realizations for $U_v(L\mathfrak{sl}_n)$, $U_{v_1, v_2}(L\mathfrak{sl}_n)$,
$U_v(L\mathfrak{sl}(m|n))$ and their integral forms, Sel. Math. N. S. 27 (2021), 35, 48pp.

	
	\bibitem{HZ}   %14
	N. Hu, H. Zhang, Two-parameter quantum affine algebra of type $\mathrm{C}_n^{(1)}$, Drinfeld realization and vertex representation, J. Algebra. 459 (2016), 43---75.
	


	\bibitem{HXZ} 	N. Hu, X. Xu, R.S. Zhuang, $RLL$-realization of two-parameter quantum affine algebra of type $B_n^{(1)}$, 29 pages.	arXiv:2405.06587	
	
	\bibitem{J}   %17
	M. Jimbo, Quantum $R$-matrix for the generalized Toda system, Commun. Math. Phys. 102 (1986), 537---547.
	
	\bibitem{JL1} N. Jing, M. Liu, $R$-matrix realization of two-parameter quantum group $U_{r,s}(\frak{gl}_n)$, Commun. Math. Stat. 2 (2014), 211---230.
	
	\bibitem{JL2} N. Jing, M. Liu, $R$-matrix realization of two-parameter quantum affine algebra $U_{r,s}(\widehat{\frak{gl}}_n)$, J. Algebra. 488 (2017), 1---28
		
	\bibitem{JLM1}  %7
	N. Jing, M. Liu, A. Molev, Isomorphism between the $R$-matrix and Drinfeld presentations of Yangian in types $B$, $C$ and $D$, Commun. Math. Phys. 361 (2018), 827---872.
		
	\bibitem{JLM2}  %8
	N. Jing, M. Liu, A. Molev, Isomorphism between the $R$-matrix and Drinfeld presentations of quantum affine algebra: type $C$, J. Math. Phys. 61 (2020), No. 3, 031701, 41 pp.
		
	\bibitem{JLM3}  %9
	N. Jing, M. Liu, A. Molev, Isomorphism between the $R$-matrix and Drinfeld presentations
	of quantum affine algebra: types $B$ and $D$, SIGMA Symm. Integr. Geom. Methods Appl. 16 (2020), Paper No. 043, 49 pp.
	
	%\bibitem{JZ}  %15
	%N. Jing, H.L. Zhang, Two-parameter twisted quantum affine algebras, J. Math. Phys. 57 (9) (2016).   %15
		
	\bibitem{JZL}
	N. Jing, X. Zhang, M. Liu, $R$-matrix presentation of quantum affine algebra in type $A_{2 n-1}^{(2)}$, Front. Math. 2023, 18 (3): 513---564.  %10
		
	\bibitem{RS} N. Yu. Reshetikhin, M. A. Semenov-Tian-Shansky, Central extensions of quantum current groups. Lett. Math. Phys. 19 (2)(1990),  133---142.
	
	
	\bibitem{TF}  %4
	L.A. Takhtadzhan, L.D. Faddeev, The quantum method of the inverse problem and the Heisenberg XYZ model, Russian Math. Surveys 34 (5) (1979), 11.   %4
	
	
	\bibitem{Y}  %2
	C.N. Yang, Some exact results for the many-body problem in one dimension with repulsive delta-function interaction, Phys. Review Lett. 19 (1967), 1312.

	
	\bibitem{ZHX} R.S. Zhuang, N. Hu, X. Xu, $RLL$-realization of two-parameter quantum affine algebra in type $D_n^{(1)}$, Pacific J. Math. 329 (2024), 357---395.
\end{thebibliography}
\end{document}